\documentclass[preprint,12pt]{elsarticle}

\usepackage{natbib}

\usepackage{graphicx}%
\usepackage{multirow}%
\usepackage{amsmath,amssymb,amsfonts}%
\usepackage{amsthm}%
\usepackage{mathrsfs}%
\usepackage[title]{appendix}%
\usepackage{xcolor}%
\usepackage{textcomp}%
\usepackage{manyfoot}%
\usepackage{booktabs}%
\usepackage{algorithm}%
\usepackage{algorithmicx}%
\usepackage{algpseudocode}%
\usepackage{listings}%
\usepackage{caption}
\usepackage[utf8]{inputenc}
\usepackage{comment}

\journal{Journal of Combinatorial Optimization}

\begin{document}

\newcommand{\Z}{{\mathbb Z}}
\newcommand{\N}{{\mathbb N}}
\newcommand{\Hi}{{\mathbb H}}
\newcommand{\R}{{\mathbb R}}
\newcommand{\Q}{{\mathbb Q}}
\newcommand{\ICG}{\mathrm{ICG}}
\newcommand{\WCG}{\mathrm{WCG}}
\newcommand{\WICG}{\mathrm{WICG}}

\newtheorem{theorem}{\bf Theorem}[section]
\newtheorem{corollary}[theorem]{\bf Corollary}
\newtheorem{lemma}[theorem]{\bf Lemma}
\newtheorem{proposition}[theorem]{\bf Proposition}
\newtheorem{conjecture}[theorem]{\bf Conjecture}
\newtheorem{remark}[theorem]{\bf Remark}
\newtheorem{problem}[theorem]{\bf Problem}
\newtheorem{definition}[theorem]{\bf Definition}

\newcommand{\QED} {\hfill$\square$}

%\newenvironment{proof} {\par \noindent \textbf{Proof. }}{\QED \par \bigskip \par}
% Centering the EPS graphics inside the FIGURE environment
\def\slika #1{\begin{center} \epsffile{#1} \end{center}}

\begin{frontmatter}

\title{Trees with maximum $\sigma$-irregularity under a prescribed maximum degree 6}

\author[address1]{Milan Ba\v si\'c\corref{mycorrespondingauthor}}
\cortext[mycorrespondingauthor]{Corresponding author}
\ead{basic\_milan@yahoo.com}
\address[address1]{University of Ni\v s, Ni\v s, Serbia}

\begin{abstract}
The sigma-irregularity index $\sigma(G) = \sum_{uv \in E(G)} (d_G(u) - d_G(v))^2$ measures the total degree imbalance along the edges of a graph.
We study extremal problems for $\sigma(T)$ within the class of trees of fixed order $n$ and bounded maximum degree $\Delta = 6$.
Using a penalty-function framework combined with handshake identities and congruence arguments, we determine the exact maximum value of $\sigma(T)$ for every residue class of $n$ modulo $6$, showing that the possible minimum values of the penalty function are $0, 10, 20, 22, 30,$ and $40$.
For each case, we provide a complete characterization of all maximizing trees in terms of degree counts and edge multiplicities. In five of the six residue classes, all extremal trees contain only vertices of degrees $1, 2,$ and $6$, while for $n \equiv 3 \pmod{6}$ an additional exceptional family arises involving vertices of degree $3$. These results extend earlier work on sigma-irregularity for smaller degree bounds and illustrate the rapidly growing combinatorial complexity of the problem as the maximum degree increases.
\end{abstract}

\begin{keyword}
sigma-irregularity; extremal trees; maximum degree; degree-based indices; graph optimization
\MSC 05C09 \sep 11A07 \sep 	90C27 \sep 	90C05
\end{keyword}

\end{frontmatter}

%%
%% Start line numbering here if you want
%%
%\linenumbers

%% main text

\section{Introduction}

A graph is called \emph{irregular} if not all vertices have the same degree. 
Quantifying how far a graph is from regularity has long been of interest in both 
theoretical and applied graph theory. One of the earliest measures of degree 
heterogeneity is the \emph{degree variance} proposed by Snijders~\cite{Snijders1981}. 
Later, Albertson~\cite{Albertson1997} introduced a prominent irregularity index
\[
\mathrm{irr}(G) \;=\; \sum_{uv\in E(G)} \bigl| d(u)-d(v) \bigr|,
\]
which is now commonly called the \emph{Albertson irregularity}. 
This invariant has been extensively studied; see, for example, 
\cite{Albertson1997,Abdo2014,Hansen2005} and the references therein.

Degree–based irregularity measures play an important role in several application 
areas, including network science and mathematical chemistry. 
They have been used to quantify structural heterogeneity in communication and 
social networks~\cite{Estrada2010}, and as molecular descriptors in 
quantitative structure–property relationships (QSPR) for chemical 
compounds~\cite{Reti2018QSPR,Gutman2005}. 
These applications have stimulated the search for alternative irregularity 
indices that are more sensitive to large local degree differences.

Among such variants, a particularly useful one is the so-called 
\emph{$\sigma$-irregularity index}, or simply the \emph{$\sigma$-index}, defined 
for any graph $G=(V,E)$ by
\[
\sigma(G) \;=\; \sum_{uv\in E(G)} \bigl(d(u)-d(v)\bigr)^2 .
\]
This index can be viewed as a quadratic analog of Albertson’s measure. 
By squaring degree differences, $\sigma(G)$  emphasizes large local disparities 
between adjacent vertices, making it a sensitive indicator of structural 
irregularity. The $\sigma$-index was systematically studied in 
\cite{Abdo2018}, where extremal graphs with maximum $\sigma$-irregularity for a 
given order were characterized. 
Further fundamental properties were developed in \cite{Gutman2018}, including a 
solution of the inverse $\sigma$-index problem. 
Additional investigations and comparisons with other irregularity measures 
appear in \cite{Reti2019}. Arif et al.~\cite{Arif2023} studied irregularity indices, including the $\sigma$-index, for graph families characterized by two main eigenvalues, and applied the resulting values in QSPR analysis.

A natural and active direction of research concerns extremal problems for 
$\sigma(G)$ under various structural constraints. 
For general connected graphs of fixed order, Abdo, Dimitrov and Gutman~\cite{Abdo2018} 
determined the unique graph that maximizes $\sigma(G)$. 
It is therefore natural to ask how this problem behaves when the class of admissible 
graphs is restricted. Among the most important such restrictions is the class of trees, 
which is both mathematically fundamental and highly relevant in applications.

In the chemical context, trees arise as molecular graphs without cycles; in this setting, 
one usually speaks of \emph{chemical trees} when vertex degrees are bounded by $4$. 
The $\sigma$-irregularity of chemical trees was investigated in 
\cite{Kovijanic2024}, where a complete characterization of trees with 
maximum $\sigma$ among all chemical trees of fixed order was obtained.

More recently, Dimitrov \emph{et al.}~\cite{Dimitrov2026} initiated a systematic 
study of $\sigma$-extremal trees under a prescribed maximum degree $\Delta$. 
They established general structural properties of $\sigma$-maximal trees for 
arbitrary $\Delta\ge 3$, and then focused in detail on the case $\Delta=5$. 
In that setting, they showed that every $n$-vertex tree maximizing $\sigma(G)$ 
with $\Delta(G)=5$ contains only vertices of degrees $1$, $2$, and $5$. 
Moreover, almost all edges in such extremal trees join a leaf to a degree-$5$ 
vertex or a degree-$5$ vertex to a degree-$2$ vertex, while edges between two 
degree-$5$ vertices or two degree-$2$ vertices are extremely rare. 
These findings support a general heuristic: for fixed maximum degree $\Delta$, a tree tends to exhibit larger $\sigma$-irregularity when its degree distribution is strongly polarized, with many vertices of degree $\Delta$ and many leaves, while vertices of degree $2$ occur in comparable number to the $\Delta$-vertices and serve primarily as connectors between the two extremes.

In the present paper, we advance this program to the next case $\Delta=6$ and 
determine the trees with maximum $\sigma$-irregularity among all $n$-vertex trees 
subject to $\Delta(G)=6$. 
Our results show that the structural paradigm observed for $\Delta=5$ persists, while exhibiting an additional structural feature. 
For most values of $n$, the $\sigma$-maximal trees with $\Delta=6$ have the same 
degree set $\{1,2,6\}$ as in the $\Delta=5$ case, consisting of degree-$6$ vertices 
connected through degree-$2$ vertices to many leaves. 
However, when the number of vertices satisfies $n\equiv 3 \pmod{6}$, an 
additional extremal configuration appears. 
In this exceptional case, the $\sigma$-maximal tree contains a vertex of degree 
$3$, so that the degree set becomes $\{1,2,3,6\}$. 
To the best of our knowledge, this is the first instance where a vertex of degree 
other than $1$, $2$, or $\Delta$ is needed to construct a $\sigma$-maximal tree 
under a prescribed maximum degree constraint.

The paper is organized as follows. 
In Section~2 we introduce notation and derive a penalty decomposition of 
$\sigma(G)$ for trees with $\Delta=6$, which reduces the extremal problem to a 
 optimization problem over edge–degree multiplicities. 
In Section~3 we develop a sequence of structural lemmas that exclude vertices of 
degrees $3$, $4$, and $5$ in most residue classes of $n$ modulo $6$, with a single 
exceptional case. 
In Section~4 we state and prove six main theorems that completely characterize 
$\sigma$-maximal trees with $\Delta=6$ for each congruence class of $n$ modulo $6$, 
and we provide explicit constructions for all extremal families. 
Finally, Section~5 contains concluding remarks and directions for future 
research, including possible extensions to larger maximum degrees.

%========================================================
\section{Preliminaries and notation}
\label{sec:prelim}

Let $d_G(u)$ denote the degree of a vertex $u$ in a graph $G$.
We begin by introducing the main invariant studied in this paper.

\begin{definition}
\label{def:sigma}
For a connected graph $G$ with edge set $E(G)$, the \emph{sigma-regularity index}
is defined by
\begin{equation}
\label{eq:def_sigma}
\sigma(G)\;=\;\sum_{uv\in E(G)}\bigl(d_G(u)-d_G(v)\bigr)^2.
\end{equation}
\end{definition}

The expression for $\sigma(G)$ can be restated as follows. For $1\le i\le j\le \Delta$,
let $m_{i,j}$ denote the number of edges joining a vertex of degree $i$ to a vertex
of degree $j$. Then
\[
\sigma(G)
=\sum_{1\le i\le j\le \Delta} m_{i,j}\,(i-j)^2
=\sum_{1\le i\le j\le \Delta}\sigma_{i,j}\,m_{i,j},
\]
where we write
\[
\sigma_{i,j}=(i-j)^2.
\]
This provides a shorter and more convenient notation.

\medskip
Throughout the paper we work with trees of maximum degree $\Delta=6$.
Let $T$ be a tree on $n$ vertices with maximum degree $\Delta=6$.
For $1\le i\le 6$, denote by $n_i$ the number of vertices of degree $i$, and for
$1\le i\le j\le 6$, denote by $m_{i,j}$ the number of edges joining a vertex
of degree $i$ to a vertex of degree $j$.

\medskip
The variables $n_i$ and $m_{i,j}$ satisfy the following identities, valid for every
tree $T$ with maximum degree $\Delta=6$:
\begin{align}
n_1+n_2+\cdots+n_6&=n, \label{eq:tree-sum-ni}\\
n_1+2n_2+\cdots+6n_6&=2n-2, \label{eq:tree-sum-deg}\\
m_{12}+m_{13}+m_{14}+m_{15}+m_{16}&=n_1, \label{eq:tree-ni-1}\\
m_{12}+2m_{22}+m_{23}+m_{24}+m_{25}+m_{26}&=2n_2, \label{eq:tree-ni-2}\\
m_{13}+m_{23}+2m_{33}+m_{34}+m_{35}+m_{36}&=3n_3, \label{eq:tree-ni-3}\\
m_{14}+m_{24}+m_{34}+2m_{44}+m_{45}+m_{46}&=4n_4, \label{eq:tree-ni-4}\\
m_{15}+m_{25}+m_{35}+m_{45}+2m_{55}+m_{56}&=5n_5, \label{eq:tree-ni-5}\\
m_{16}+m_{26}+m_{36}+m_{46}+m_{56}+2m_{66}&=6n_6. \label{eq:tree-ni-6}
\end{align}

\medskip
We define
\[
P(T)=\sum_{(i,j)\in S} F(i,j)\,m_{i,j},
\qquad
S=\{(i,j):1\le i\le j\le 6,\ (i,j)\notin\{(1,6),(2,6)\}\},
\]
where the coefficients $F(i,j)$ are given by
\[
\begin{aligned}
F(1,2) &= -\sigma_{1,2} + \tfrac{5}{3}\sigma_{1,6} - \tfrac{2}{3}\sigma_{2,6},\\
F(1,3) &= -\sigma_{1,3} + \tfrac{4}{3}\sigma_{1,6} - \tfrac{1}{3}\sigma_{2,6},\\
F(1,4) &= -\sigma_{1,4} + \tfrac{7}{6}\sigma_{1,6} - \tfrac{1}{6}\sigma_{2,6},\\
F(1,5) &= -\sigma_{1,5} + \tfrac{16}{15}\sigma_{1,6} - \tfrac{1}{15}\sigma_{2,6},\\[2mm]
F(2,2) &= \tfrac{2}{3}\sigma_{1,6} - \sigma_{2,2} + \tfrac{1}{3}\sigma_{2,6},\\
F(2,3) &= \tfrac{1}{3}\sigma_{1,6} - \sigma_{2,3} + \tfrac{2}{3}\sigma_{2,6},\\
F(2,4) &= \tfrac{1}{6}\sigma_{1,6} - \sigma_{2,4} + \tfrac{5}{6}\sigma_{2,6},\\
F(2,5) &= \tfrac{1}{15}\sigma_{1,6} - \sigma_{2,5} + \tfrac{14}{15}\sigma_{2,6},\\[2mm]
F(3,3) &= \sigma_{2,6} - \sigma_{3,3},\\
F(3,4) &= -\tfrac{1}{6}\sigma_{1,6} + \tfrac{7}{6}\sigma_{2,6} - \sigma_{3,4},\\
F(3,5) &= -\tfrac{4}{15}\sigma_{1,6} + \tfrac{19}{15}\sigma_{2,6} - \sigma_{3,5},\\
F(3,6) &= -\tfrac{1}{3}\sigma_{1,6} + \tfrac{4}{3}\sigma_{2,6} - \sigma_{3,6},\\[2mm]
F(4,4) &= -\tfrac{1}{3}\sigma_{1,6} + \tfrac{4}{3}\sigma_{2,6} - \sigma_{4,4},\\
F(4,5) &= -\tfrac{13}{30}\sigma_{1,6} + \tfrac{43}{30}\sigma_{2,6} - \sigma_{4,5},\\
F(4,6) &= -\tfrac{1}{2}\sigma_{1,6} + \tfrac{3}{2}\sigma_{2,6} - \sigma_{4,6},\\[2mm]
F(5,5) &= -\tfrac{8}{15}\sigma_{1,6} + \tfrac{23}{15}\sigma_{2,6} - \sigma_{5,5},\\
F(5,6) &= -\tfrac{3}{5}\sigma_{1,6} + \tfrac{8}{5}\sigma_{2,6} - \sigma_{5,6},\\
F(6,6) &= -\tfrac{2}{3}\sigma_{1,6} + \tfrac{5}{3}\sigma_{2,6} - \sigma_{6,6}.
\end{aligned}
\]

Table~\ref{tab:F-delta6-row} lists all values of $F(i,j)$ for $\Delta=6$, sorted
in increasing order.

\begin{table}[ht]
\centering
\caption{Values of the penalty function $F(i,j)$ for $\Delta=6$, arranged row-wise
in increasing order for $(i,j)\in S=\{(i,j):1\le i\le j\le 6,\ (i,j)\notin\{(1,6),(2,6)\}\}$.}
\label{tab:F-delta6-row}
\begin{tabular}{cccc}
\hline
$(i,j)$ & $F(i,j)$ & $(i,j)$ & $F(i,j)$ \\ \hline
$(3,6)$ & $4.0$   & $(4,6)$ & $7.5$   \\
$(2,5)$ & $7.6$   & $(1,5)$ & $9.6$   \\
$(3,5)$ & $9.6$   & $(5,6)$ & $9.6$   \\
$(6,6)$ & $10.0$  & $(4,5)$ & $11.1$  \\
$(5,5)$ & $11.2$  & $(4,4)$ & $13.0$  \\
$(3,4)$ & $13.5$  & $(2,4)$ & $13.5$  \\
$(3,3)$ & $16.0$  & $(1,4)$ & $17.5$  \\
$(2,3)$ & $18.0$  & $(2,2)$ & $22.0$  \\
$(1,3)$ & $24.0$  & $(1,2)$ & $30.0$  \\
$(1,1)$ & $40.0$  &         &         \\ \hline
\end{tabular}
\end{table}

%========================================================
\subsection{Decomposition of $\sigma(T)$ and the reduced optimization problem}
\label{sec:decomposition}

In this section we eliminate the variables $m_{1,6}$ and $m_{2,6}$ from the handshake
system \eqref{eq:tree-sum-ni}--\eqref{eq:tree-ni-6} and express them explicitly in terms
of the remaining edge-multiplicities $m_{i,j}$ with $(i,j)\in S$ and the order $n$.

\medskip
Solving the linear system obtained from
\eqref{eq:tree-sum-ni} -- \eqref{eq:tree-ni-6},
we obtain
\begin{align}
m_{1,6}
&=
\frac{2n}{3}+\frac{4}{3}
-\frac{5}{3}m_{1,2}
-\frac{4}{3}m_{1,3}
-\frac{7}{6}m_{1,4}
-\frac{16}{15}m_{1,5}
-\frac{2}{3}m_{2,2}
-\frac{1}{3}m_{2,3}
\nonumber\\
&\hspace{1.2cm}
-\frac{1}{6}m_{2,4}
-\frac{1}{15}m_{2,5}
+\frac{1}{6}m_{3,4}
+\frac{4}{15}m_{3,5}
+\frac{1}{3}m_{3,6}
+\frac{1}{3}m_{4,4}
+\frac{13}{30}m_{4,5}
\nonumber\\
&\hspace{1.2cm}
+\frac{1}{2}m_{4,6}
+\frac{8}{15}m_{5,5}
+\frac{3}{5}m_{5,6}
+\frac{2}{3}m_{6,6},
\label{eq:m16-general}
\\[2mm]
m_{2,6}
&=
\frac{n}{3}-\frac{7}{3}
+\frac{2}{3}m_{1,2}
+\frac{1}{3}m_{1,3}
+\frac{1}{6}m_{1,4}
+\frac{1}{15}m_{1,5}
-\frac{1}{3}m_{2,2}
-\frac{2}{3}m_{2,3}
\nonumber\\
&\hspace{1.2cm}
-\frac{5}{6}m_{2,4}
-\frac{14}{15}m_{2,5}
-m_{3,3}
-\frac{7}{6}m_{3,4}
-\frac{19}{15}m_{3,5}
-\frac{4}{3}m_{3,6}
-\frac{4}{3}m_{4,4}
\nonumber\\
&\hspace{1.2cm}
-\frac{43}{30}m_{4,5}
-\frac{3}{2}m_{4,6}
-\frac{23}{15}m_{5,5}
-\frac{8}{5}m_{5,6}
-\frac{5}{3}m_{6,6}.
\label{eq:m26-general}
\end{align}

These identities are used in the proof of Theorem~\ref{thm:delta6-P0} to decompose
$\sigma(T)$ into a constant term depending only on $n$ and the penalty term $P(T)$. Indeed, substituting \eqref{eq:m16-general}–\eqref{eq:m26-general} into
$\Sigma(T)=\sum_{1\le i\le j\le 6}\sigma_{i,j}m_{i,j}$, where
$\sigma_{i,j}=(i-j)^2$,
and then collecting like terms, we can obtain
\[
\Sigma(T)=C(n)-P(T),
\]
with
\[
P(T)=\sum_{(i,j)\in S}F(i,j)\,m_{i,j}.
\]

\medskip
Throughout this paper, let $\mathcal{T}_n^{(6)}$ be the family of all trees on $n$
vertices with maximum degree $\Delta=6$. For $n\ge 1$, define
\[
P_{\min}(n):=\min\{P(T):\, T\in \mathcal{T}_n^{(6)}\}.
\]
Since $\sigma(T)$ differs from a constant (depending only on $n$) by $-P(T)$, maximizing
$\sigma(T)$ over $\mathcal{T}_n^{(6)}$ is equivalent to minimizing $P(T)$ over
$\mathcal{T}_n^{(6)}$.

%========================================================
\section{Structural exclusions in the extremal range}
\label{sec:exclusions}

In order to determine all minimizers of $P(T)$, we first show that for the penalty
ranges relevant to extremality, vertices of degrees $4$ and $5$ do not occur, and
vertices of degree $3$ occur only in one explicit exceptional configuration.
This reduces the subsequent case analysis to a small collection of edge types.

\subsection{Excluding degrees $4$ and $5$}
\label{subsec:no45}

\begin{lemma}
\label{lem:no5-threshold}
Let $\Delta=6$ and let $T$ be a tree.
\begin{itemize}
\item If $P(T)\le 30$, then $n_5=0$.
\item If $n\not\equiv 0\pmod 6$ and $P(T)\le 40$, then $n_5=0$.
\end{itemize}
\end{lemma}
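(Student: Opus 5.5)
The plan is to bound $P(T)$ from below by the penalty carried by the edges incident to degree-$5$ vertices alone. From Table~\ref{tab:F-delta6-row}, every coefficient $F(i,j)$ with $(i,j)\in S$ is strictly positive; the smallest is $F(3,6)=4$. So $P(T)$ is at least the sum of $F$ over any subset of $S$-edges. Suppose $n_5=k\ge 1$. Every edge at a degree-$5$ vertex has type $(1,5),(2,5),(3,5),(4,5),(5,5)$ or $(5,6)$, and all of these lie in $S$. Their penalties are $9.6,\,7.6,\,9.6,\,11.1,\,11.2,\,9.6$, so the minimum is $F(2,5)=7.6$.

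For the first claim, note that the degree-$5$ vertices induce a forest, so $m_{5,5}\le k-1$. The number of edges with exactly one degree-$5$ endpoint is $5k-2m_{5,5}$. Hence
\[
P(T)\ \ge\ 7.6\,(5k-2m_{5,5})+11.2\,m_{5,5}\ =\ 38k-4m_{5,5}\ \ge\ 34k+4\ \ge 38>30 .
\]
This settles the first claim. The same bound gives $P(T)\ge 72>40$ whenever $k\ge 2$. So for the second claim only $n_5=1$ remains, and there $38\le P(T)\le 40$.

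With $n_5=1$, the five edges at the degree-$5$ vertex contribute at least $38$. Any further $S$-edge would add at least $4$, so there are none, and all remaining edges are of types $(1,6)$ or $(2,6)$. The penalty of the five incident edges is then either $38$ (five $(2,5)$ edges) or $40$ (four $(2,5)$ edges plus one edge of penalty $9.6$). In the second case, a $(3,5)$ edge is impossible, since the degree-$3$ vertex would carry two more $S$-edges. The $(4,5)$ and $(5,5)$ types exceed the budget. This leaves three configurations:
\[
\{m_{2,5}=5\},\qquad \{m_{2,5}=4,\ m_{1,5}=1\},\qquad \{m_{2,5}=4,\ m_{5,6}=1\},
\]
with all other $S$-multiplicities zero.

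In each configuration I substitute into \eqref{eq:m16-general}--\eqref{eq:m26-general} and impose integrality of $m_{1,6}$, of $m_{2,6}$, and of $n_2=(m_{2,5}+m_{2,6})/2$ via \eqref{eq:tree-ni-2}. I also use $6n_6=m_{1,6}+m_{2,6}+m_{5,6}$ from \eqref{eq:tree-ni-6}.
\begin{itemize}
\item For $m_{2,5}=5$: we get $m_{1,6}=2n/3+1$ and $m_{2,6}=n/3-7$. Evenness of $5+m_{2,6}$ then forces $n\equiv 0\pmod 6$.
\item For the $(1,5)$ case: $m_{1,6}=2n/3$ and $2n_2=n/3-2$, which again forces $n\equiv 0\pmod 6$.
\end{itemize}
These cases therefore contradict $n\not\equiv 0\pmod 6$.

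The main obstacle is the $(5,6)$ configuration. A direct count with $n_6=a$ gives $n=6a+5$, which is not excluded by congruence alone. Here I would first recheck the coefficient $F(5,6)$ and the $m_{5,6}$ terms in \eqref{eq:m16-general}--\eqref{eq:m26-general} against a concrete tree. If the arithmetic stands, I would look for a structural obstruction, for instance in the connectivity of the degree-$6$ vertices through degree-$2$ paths, or combine the counting of $n_1,n_2,n_6$ with the tree identity \eqref{eq:tree-sum-deg}, to rule this configuration out. This is the step where the argument needs the most care.
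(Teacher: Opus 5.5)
Your proof of the first item is correct. Your bound $m_{5,5}\le n_5-1$ (the degree-$5$ vertices induce a forest) even gives $P(T)\ge 34n_5+4$, which is sharper than the paper's $28n_5$. For the second item, your reduction to $n_5=1$ with $P(T)\in\{38,40\}$ is right, your list of three configurations is complete, and your computations forcing $n\equiv 0\pmod 6$ in the first two are correct.

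\textbf{The gap.} It is the step you flagged, and it cannot be closed. The configuration $m_{2,5}=4$, $m_{5,6}=1$ is realized by an actual tree, so there is no structural obstruction to find. Combining with \eqref{eq:tree-sum-deg} yields nothing new either, since it is already built into \eqref{eq:m16-general}--\eqref{eq:m26-general}.

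\textbf{The counterexample.} Take a vertex $w$ adjacent to a vertex $a$ and to $b_1,\dots,b_4$. Join each $b_i$ to a new vertex $c_i$, and attach five pendant leaves to $a$ and to each $c_i$. This tree has $n=35\equiv 5\pmod 6$, maximum degree $6$, $n_5=1$, $m_{1,6}=25=(2n+5)/3$ and $m_{2,6}=4=(n-23)/3$. Its index is
\[
\sigma(T)=25\cdot 25+4\cdot 16+4\cdot 9+1\cdot 1=726,
\]
whereas the constant term is $C(n)=25\cdot\frac{2n+4}{3}+16\cdot\frac{n-7}{3}=22n-4=766$. Hence $P(T)=40=4F(2,5)+F(5,6)$, so the coefficients you wanted to recheck are fine. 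Replacing a leaf of $a$ by a path through a new degree-$2$ vertex to a new degree-$6$ star adds $6$ vertices and keeps $P=40$. This gives such trees for every $n\equiv 5\pmod 6$ with $n\ge 35$.

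The second item of the lemma is therefore false as stated. What your case analysis actually proves is the version with hypothesis $n\not\equiv 0,5\pmod 6$.

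\textbf{The paper's proof.} It has the same hole. In Case~1 it asserts that $P(T)\le 40$ forces $P(T)=38$, and hence $m_{2,5}=5$. This overlooks exactly the two configurations with penalty $4F(2,5)+9.6=40$ that you found.

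The later applications only need the corrected version:
\begin{itemize}
\item In Lemma~\ref{lem:no3-30-40-final-v5} and Theorem~\ref{thm:delta6-P40} one has $n\equiv 3,4\pmod 6$.
\item In the third item of Lemma~\ref{lem:no4-threshold} with $n\equiv 5\pmod 6$, one gets $n_5=0$ directly. There $n_4\ge 1$, and $n_4\ge 1$ together with $n_5\ge 1$ already forces $P(T)>40$.
\end{itemize}
So the main theorems survive, but the statement you were asked to prove needs the extra exclusion $n\not\equiv 5\pmod 6$.
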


\begin{proof}
Assume that $n_5\ge 1$.
From the degree--$5$ handshake equation \eqref{eq:tree-ni-5} we have
\[
m_{1,5}+m_{2,5}+m_{3,5}+m_{4,5}+2m_{5,5}+m_{5,6}=5n_5.
\]
%Thus there are exactly $5n_5$ incidences at degree--$5$ vertices.

We distinguish two cases.

\medskip
\noindent\textbf{Case 1: $n_5=1$.}
Then there is exactly one vertex of degree $5$, hence there is no edge of type $(5,5)$ and $m_{5,5}=0$.

Consequently,
\[
m_{1,5}+m_{2,5}+m_{3,5}+m_{4,5}+m_{5,6}=5.
\]
By Table~\ref{tab:F-delta6-row}, among the edge types incident to degree $5$ with the other endpoint
different from $5$, the smallest value is $F(2,5)=7.6$. Therefore,
\[
P(T)\ \ge\ 7.6\bigl(m_{1,5}+m_{2,5}+m_{3,5}+m_{4,5}+m_{5,6}\bigr)=7.6\cdot 5=38.
\]
In particular, $P(T)\le 30$ is impossible, and if $P(T)\le 40$ then necessarily $P(T)=38$.
Moreover, since $38<40$ and every positive value $F(i,j)$ is at least $4$,
the equality $P(T)=38$ forces
\[
m_{2,5}=5
\qquad\text{and}\qquad
m_{i,j}=0\ \text{for all }(i,j)\in S\setminus\{(2,5)\}.
\]
Now the degree--$6$ handshake row \eqref{eq:tree-ni-6} reduces to
\[
m_{1,6}+m_{2,6}=6n_6,
\]
so $m_{1,6}+m_{2,6}\equiv 0\pmod 6$.
On the other hand, counting edges gives
\[
m_{1,6}+m_{2,6}= (n-1)-m_{2,5} = (n-1)-5 = n-6.
\]
Hence $n\equiv 0\pmod 6$, contradicting the assumption $n\not\equiv 0\pmod 6$ in the second claim.

\medskip
\noindent\textbf{Case 2: $n_5\ge 2$.}
From the degree--$5$ handshake equation \eqref{eq:tree-ni-5} we have $2m_{5,5}\le 5n_5$ and hence $m_{5,5}\le \frac{5n_5}{2}$.

By Table~\ref{tab:F-delta6-row}, among all edge types incident to degree $5$, the minimum value of $F$  occurs at $(2,5)$ with $F(2,5)=7.6$, while $F(5,5)=11.2$.

Therefore,
\[
P(T)
\ \ge\ 
11.2\,m_{5,5}
+7.6\bigl(5n_5-2m_{5,5}\bigr)
\ =\ 
7.6\cdot 5n_5 -4.0\,m_{5,5}.
\]
Using $m_{5,5}\le \frac{5n_5}{2}$ yields
\[
P(T)
\ \ge\ 
7.6\cdot 5n_5
-4.0\cdot \frac{5n_5}{2}
\ =\ 
28n_5
\ \ge\ 
56,
\]
which contradicts both bounds $P(T)\le 30$ and $P(T)\le 40$.

\end{proof}

We next turn to vertices of degree $4$. Since the smallest penalties associated with
edges incident to degree $4$ vertices are already relatively large
(cf.\ Table~\ref{tab:F-delta6-row}), their presence quickly forces $P(T)$ above the
extremal thresholds considered here. This observation leads to the following
exclusion result.

\begin{lemma}
\label{lem:no4-threshold}
Let $\Delta=6$ and let $T$ be a tree on $n$ vertices.

\begin{itemize}
\item If $P(T)<30$, then $n_4=0$.
\item If $n\not\equiv 1 \pmod 6$ and $P(T)=30$, then $n_4=0$.
\item If $n\not\equiv \{0,1,2\} \pmod 6$ and $30<P(T)\le 40$, then $n_4=0$.
\end{itemize}
\end{lemma}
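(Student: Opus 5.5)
The proof will mirror Lemma~\ref{lem:no5-threshold}. Assume $n_4\ge 1$. From Table~\ref{tab:F-delta6-row}, the edge types incident to a degree-$4$ vertex have penalties
\[
F(4,6)=7.5,\quad F(4,5)=11.1,\quad F(4,4)=13,\quad F(2,4)=F(3,4)=13.5,\quad F(1,4)=17.5 .
\]
The cheapest is $(4,6)$, and the degree-$4$ handshake equation \eqref{eq:tree-ni-4} says that $4n_4$ incidences must be paid for.

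\textbf{Case $n_4\ge 2$.} We get
\[
P(T)\ \ge\ 13\,m_{4,4}+7.5\,(4n_4-2m_{4,4})\ =\ 30n_4-2m_{4,4}.
\]
Since $2m_{4,4}\le 4n_4$, this gives $P(T)\ge 26n_4\ge 52>40$. So this case is excluded in all three items.

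\textbf{Case $n_4=1$.} Here $m_{4,4}=0$, and the four edges at the degree-$4$ vertex alone give $P(T)\ge 4\cdot 7.5=30$. This proves the first item.

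If $P(T)=30$, equality forces $m_{4,6}=4$ and $m_{i,j}=0$ for every other $(i,j)\in S$, in particular $m_{6,6}=0$. Then:
\begin{itemize}
\item the degree-$6$ handshake \eqref{eq:tree-ni-6} gives $m_{1,6}+m_{2,6}+4=6n_6$;
\item counting edges gives $n-1=m_{1,6}+m_{2,6}+4$.
\end{itemize}
Hence $n-1=6n_6$, so $n\equiv 1\pmod 6$. This proves the second item.

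For the third item, let $n\not\equiv 0,1,2\pmod 6$ and $30<P(T)\le 40$. By Lemma~\ref{lem:no5-threshold} we have $n_5=0$, so all types $(\cdot,5)$ vanish. The excess over the baseline $30$ is at most $10$, and it can only come from two sources:
\begin{itemize}
\item replacing some $(4,6)$ edges at the degree-$4$ vertex by $(2,4)$ or $(3,4)$ (cost $+6$ each) or by $(1,4)$ (cost $+10$);
\item adding further penalized edges of cheap type, namely $(3,6)$ (cost $4$) or $(6,6)$ (cost $10$).
\end{itemize}
I will first dispose of degree-$3$ vertices. A degree-$3$ vertex needs three incidences. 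The cheapest type is $(3,6)$ at $4$ each, giving $12>10$, and $(3,4)$ is also too expensive. So $n_3=0$, and the $(3,6)$ and $(3,4)$ options disappear.

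This leaves a short list of configurations:
\begin{itemize}
\item[(a)] $m_{4,6}=4$, $m_{6,6}=1$;
\item[(b)] $m_{4,6}=3$, $m_{2,4}=1$;
\item[(c)] $m_{4,6}=3$, $m_{1,4}=1$;
\end{itemize}
possibly with a few remaining small combinations, which must be checked exhaustively. In each configuration I will compute $n\bmod 6$. The tools are the integrality of $m_{1,6}$ and $m_{2,6}$ in \eqref{eq:m16-general}--\eqref{eq:m26-general}, together with the degree-$6$ handshake combined with the edge count. For example, in (a) the latter gives $n-1=6n_6-1$, so $n\equiv 0$. The aim is to show every surviving configuration forces $n\equiv 0,1,2\pmod 6$, contradicting the hypothesis.

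\textbf{Main obstacle.} The hard step is configuration (b), and more generally any case where the formulas for $m_{1,6},m_{2,6}$ only pin $n$ down modulo $3$. For instance, (b) yields $n\equiv 2\pmod 3$, which still leaves $n\equiv 5\pmod 6$ open. To recover the missing parity I would try the following, in order:
\begin{itemize}
\item the degree-$2$ handshake \eqref{eq:tree-ni-2}, i.e.\ $2n_2=m_{2,6}+m_{2,4}$, which forces a parity condition on $m_{2,6}$, hence on $n$ via \eqref{eq:m26-general};
\item failing that, the leaf equation \eqref{eq:tree-ni-1} together with \eqref{eq:tree-sum-deg}.
\end{itemize}
If a configuration genuinely survives with $n\equiv 5\pmod 6$, the enumeration of the excess terms must be re-examined, since then the lemma as stated would require a finer argument.
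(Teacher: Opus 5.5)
Your strategy is the paper's, and your treatment of $n_4\ge 2$ and of the first two items is correct. Your bound $P(T)\ge 30n_4-2m_{4,4}\ge 26n_4$ is even cleaner than the paper's, since it correctly charges $(4,4)$-incidences at $6.5$ each. The gap is the third item, which you do not finish. You leave the enumeration open (``possibly with a few remaining small combinations''), you never check configuration (c), and you declare (b) an unresolved obstacle, even allowing that the lemma might fail.

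None of this is a real difficulty. Your exclusion of degree $3$ is fine: such a vertex adds at least $12$, or $6+8$ if it uses a $(3,4)$ edge. Once $n_3=n_5=0$ and $n_4=1$, you have $P(T)=30+6m_{2,4}+10m_{1,4}+10m_{6,6}+22m_{2,2}+30m_{1,2}$. An excess in $(0,10]$ then leaves exactly (a), (b), (c).

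For the congruences, subtracting the edge count from the degree-$6$ handshake gives, in general,
\[
6n_6-(n-1)=m_{6,6}-\#\{\text{edges with no endpoint of degree }6\}.
\]
This is the very identity you already used in (a). In (b) and (c), $m_{6,6}=0$ and the only edge avoiding degree-$6$ vertices is the single $(2,4)$ or $(1,4)$ edge. Hence $n-1=6n_6+1$, so $n\equiv 2\pmod 6$ at once, which is precisely the paper's Case~2.3.

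The formulas \eqref{eq:m16-general}--\eqref{eq:m26-general} see $n$ only modulo $3$ because they do not encode integrality of $n_6$ or $n_2$. Your first fallback would also have worked: in (b), $m_{2,6}=(n-23)/3$, so $n_2=(m_{2,6}+1)/2=(n-20)/6$.

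The paper splits cases by $m_{4,6}$ and removes $(3,6)$ and $(3,4)$ edges via congruences and the degree-$3$ handshake, rather than excluding $n_3$ first. That difference is only a reorganization.
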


\begin{proof}
Assume $n_4\ge 1$.

From the degree--$4$ handshake equation we have
\[
m_{1,4}+m_{2,4}+m_{3,4}+m_{4,5}+m_{4,6}+2m_{4,4}=4n_4 \ge 4 .
\]
Hence there are at least $4$ incidences at degree--$4$ vertices. 
Indeed, if $m_{4,4}=0$, this is immediate from $4n_4\ge 4$, while if
$m_{4,4}\ge 1$, then necessarily $n_4\ge 2$ and hence $4n_4\ge 8$.
By Table~\ref{tab:F-delta6-row}, the smallest penalty among all edge--types
incident to degree $4$ is $F(4,6)=7.5$, so
\[
P(T)\ \ge\ 4\cdot 7.5 \ =\ 30 .
\]
This proves the first item.

\medskip
\noindent
\textbf{Case 1: $P(T)=30$ and $n_4\ge 1$.}  
Equality in the above bound forces all incidences at degree--$4$ vertices
to come from $(4,6)$--edges. Hence
\[
n_4=1, \qquad m_{4,6}=4, \qquad m_{i,j}=0 \ \text{for all }(i,j)\in S\setminus\{(4,6)\}.
\]
Let $x=m_{1,6}$ and $y=m_{2,6}$.  
Counting edges gives
\[
x+y+4=n-1 ,
\]
while the degree--$6$ handshake equation (\ref{eq:tree-ni-6}) yields
\[
x+y+4=6n_6 .
\]
Thus $6n_6=n-1$, i.e.\ $n\equiv 1\pmod 6$.  
Therefore, if $n\not\equiv 1\pmod 6$, then $P(T)=30$ is impossible with $n_4\ge 1$,
which proves the second item.

\medskip
\noindent
\textbf{Case 2: $30<P(T)\le 40$ and $n\not\equiv\{0,1,2\}\pmod 6$.}  
Assume first that $n_4\ge 2$. Then
\[
m_{1,4}+m_{2,4}+m_{3,4}+m_{4,5}+m_{4,6}+2m_{4,4}=4n_4 \ge 8 .
\]

If $m_{4,4}=0$, then all incidences come from edges incident to degree $4$
of types $(1,4)$, $(2,4)$, $(3,4)$, $(4,5)$ or $(4,6)$.  
The smallest penalty among these is $F(4,6)=7.5$, hence
\[
P(T)\ \ge\ 8\cdot 7.5 \ =\ 60 \ >\ 40 ,
\]
a contradiction.

If $m_{4,4}\ge 1$, then one $(4,4)$--edge contributes two incidences at cost $13$,
and the remaining at least six incidences contribute at least $6\cdot 7.5=45$.
Hence
\[
P(T)\ \ge\ 13 + 45 \ =\ 58 \ >\ 40 ,
\]
again a contradiction.

Therefore $n_4\ge 2$ is impossible, and we must have $n_4=1$.

By Lemma~\ref{lem:no5-threshold} (applied under $P(T)\le 40$ and
$n\not\equiv 0\pmod 6$), we also have $n_5=0$, hence $m_{4,5}=0$ and $m_{4,4}=0$.
Thus the degree--$4$ handshake reduces to
\[
m_{1,4}+m_{2,4}+m_{3,4}+m_{4,6}=4 .
\]

\medskip
\noindent\textbf{Case 2.1: $m_{4,6}\le 2$.}
Then $m_{1,4}+m_{2,4}+m_{3,4}\ge 2$, so at least two incidences come from
$(1,4)$, $(2,4)$, or $(3,4)$.
By Table~\ref{tab:F-delta6-row}, each such incidence has penalty at least $13.5$.
Hence
\[
P(T)\ge 2\cdot 7.5+2\cdot 13.5=42>40,
\]
a contradiction. Therefore $m_{4,6}\in\{3,4\}$.

\medskip
\noindent
\textbf{Case 2.2: $m_{4,6}=4$.}  
Then $m_{1,4}=m_{2,4}=m_{3,4}=0$, and the remaining penalty budget is at most
$P(T)-30\le 10$.  
Since $F(3,6)=4$ and $F(6,6)=10$ are the only values in $S$ not exceeding $10$
(with $n_5=0$), we must have
\[
m_{i,j}=0 \ \text{for all }(i,j)\in S\setminus\{(3,6),(4,6),(6,6)\}, \qquad
m_{3,6}\le 2, \quad m_{6,6}\le 1 .
\]

Let $x=m_{1,6}$, $y=m_{2,6}$, $t=m_{3,6}$ and $b=m_{6,6}$.  
Counting edges gives
\begin{eqnarray}
\label{edge_count_4}
  x+y+t+b+4=n-1 ,  
\end{eqnarray}

and the degree--$6$ handshake equation yields
\begin{eqnarray}
\label{handshake_6_4}
x+y+t+4+2b=6n_6 .
\end{eqnarray}
Subtracting (\ref{edge_count_4}) from (\ref{handshake_6_4}) gives
\[
b=6n_6-(n-1) .
\]
Since $n\not\equiv\{0,1\}\pmod 6$, we obtain $b\ge 2$, contradicting $b\le 1$.
Hence this case is impossible.

\medskip
\noindent
\textbf{Case 2.3: $m_{4,6}=3$.}  
Then
\[
m_{1,4}+m_{2,4}+m_{3,4}=1 .
\]

\smallskip
\noindent
\textbf{Case 2.3(a): the unique non-$(4,6)$ edge is $(1,4)$.}  
Then
\[
P(T)\ \ge\ 3\cdot 7.5 + 17.5 \ =\ 40 ,
\]
so $P(T)=40$ and no further $(i,j)\in S$ may occur.  
Hence $m_{6,6}=m_{3,6}=0$.

Let $x=m_{1,6}$ and $y=m_{2,6}$.  
Edge counting gives
\[
x+y+3+1=n-1 \quad\Rightarrow\quad x+y=n-5 ,
\]
while the degree--$6$ handshake gives
\[
x+y+3=6n_6 .
\]
Thus $6n_6=n-2$, impossible for $n\not\equiv 2\pmod 6$.

\smallskip
\noindent
\textbf{Case 2.3(b): the unique non-$(4,6)$ edge is $(2,4)$ or $(3,4)$.}  
Then
\[
P(T)\ \ge\ 3\cdot 7.5 + 13.5 \ =\ 36 ,
\]
so the remaining budget is at most $4$.  
Since $F(3,6)=4$ is the only admissible positive value in $S$ not exceeding $4$,
we must have
\[
m_{i,j}=0 \ \text{for all }(i,j)\in S\setminus\{(3,6),(4,6),(2,4),(3,4)\},
\qquad m_{3,6}\le 1 .
\]

The degree--$3$ handshake equation becomes
\[
m_{3,4}+m_{3,6}=3n_3 .
\]
Since $m_{3,4}\in\{0,1\}$ and $m_{3,6}\le 1$, this forces
\[
m_{3,4}=m_{3,6}=0, \qquad n_3=0 .
\]

Now edge counting gives
\[
x+y+3+1=n-1 \quad\Rightarrow\quad x+y=n-5 ,
\]
and the degree--$6$ handshake yields
\[
x+y+3=6n_6 .
\]
Thus again $6n_6=n-2$, impossible for $n\not\equiv 2\pmod 6$.

\medskip
Since all cases lead to contradictions, we conclude that $n_4=0$ whenever
$n\not\equiv\{0,1,2\}\pmod 6$ and $30<P(T)\le 40$.  
This proves the third item.
\end{proof}

\subsection{Excluding degree $3$}
\label{subsec:no3}

The elimination of vertices of degree $3$ is more subtle. Unlike degrees $4$ and $5$,
degree $3$ can interact with degree $6$ at relatively low penalty cost via edges of
type $(3,6)$. As a result, its exclusion depends not only on the penalty threshold
but also on congruence conditions on $n$. We therefore treat the ranges $P(T)\le 10$,  $10<P(T)\le 22$,
$22<P(T)\le 30$ and $30<P(T)\le 40$ separately.

\medskip

\begin{lemma}
\label{lem:no3-threshold}
Let $\Delta=6$ and let $T$ be a tree on $n$ vertices.
\begin{itemize}
\item If $P(T)\le 10$, then $n_3=0$.
\item If $n\not\equiv \{0,1\} \pmod 6$ and $10<P(T)\le 22$, then $n_3=0$.
\end{itemize}
\end{lemma}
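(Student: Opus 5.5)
Throughout, argue by contradiction: assume $n_3\ge 1$ and show that $P(T)$ is too large or that $n$ lies in a forbidden residue class. First reduce the edge types. If $P(T)\le 22<30$, Lemma~\ref{lem:no4-threshold} gives $n_4=0$ and Lemma~\ref{lem:no5-threshold} gives $n_5=0$. Hence $m_{3,4}=m_{3,5}=0$, and the degree--$3$ handshake equation \eqref{eq:tree-ni-3} becomes
\[
m_{1,3}+m_{2,3}+2m_{3,3}+m_{3,6}=3n_3 .
\]
Next assign a cost per incidence at degree--$3$ vertices. An edge $(3,6)$ gives one incidence at cost $F(3,6)=4$. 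The edges $(1,3)$ and $(2,3)$ give one incidence at cost $24$ and $18$ respectively. An edge $(3,3)$ gives two incidences at total cost $16$, that is, $8$ per incidence. So each incidence costs at least $4$, and since all $F$-values in Table~\ref{tab:F-delta6-row} are positive,
\[
P(T)\ge 4\cdot 3n_3=12n_3\ge 12 .
\]
This contradicts $P(T)\le 10$ and proves the first item.

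For the second item, suppose $10<P(T)\le 22$ and $n\not\equiv 0,1\pmod 6$.
\begin{itemize}
\item If $n_3\ge 2$, the same bound gives $P(T)\ge 24>22$. So $n_3=1$, and therefore $m_{3,3}=0$ and $m_{1,3}+m_{2,3}+m_{3,6}=3$.
\item If any of the three incidences is not of type $(3,6)$, then $P(T)\ge 2\cdot 4+18=26>22$. So $m_{3,6}=3$.
\item These three edges already cost $12$, leaving a budget of at most $10$ for all other edge types in $S$. With $n_3=1$ and $n_4=n_5=0$, the only remaining type in $S$ with $F\le 10$ is $(6,6)$. Hence $m_{6,6}=b\le 1$ and every other $m_{i,j}$ with $(i,j)\in S\setminus\{(3,6),(6,6)\}$ vanishes.
\end{itemize}

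Now use the congruence argument, as in Case 2.2 of Lemma~\ref{lem:no4-threshold}. Write $x=m_{1,6}$ and $y=m_{2,6}$. Counting edges gives $x+y+3+b=n-1$. The degree--$6$ handshake equation \eqref{eq:tree-ni-6} gives $x+y+3+2b=6n_6$. Subtracting the first from the second yields
\[
b=6n_6-(n-1).
\]
For $b=0$ this forces $n\equiv 1\pmod 6$, and for $b=1$ it forces $n\equiv 0\pmod 6$. Both contradict the hypothesis, which proves the second item.

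The only delicate step is the budget bookkeeping. One must check that once $n_4=n_5=0$ and $n_3=1$ with $m_{3,6}=3$, no type other than $(6,6)$ fits into the leftover budget of $10$. It also helps to note that the degree--$2$ and leaf equations are never needed, because the congruence comes entirely from the degree--$6$ row combined with edge counting.
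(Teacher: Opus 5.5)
Your proof is correct and follows essentially the same route as the paper. Both arguments exclude degrees $4$ and $5$ via Lemmas~\ref{lem:no4-threshold} and~\ref{lem:no5-threshold}, then use penalty bounds to force every degree-$3$ incidence to be a $(3,6)$-edge. This gives $P(T)\ge 12$, and in the second item $n_3=1$, $m_{3,6}=3$, $m_{6,6}\le 1$, after which both finish with the same congruence $m_{6,6}=6n_6-(n-1)$. The only difference is that your per-incidence bound $P(T)\ge 12n_3$ packages the paper's step-by-step elimination of $m_{1,3}$, $m_{2,3}$ (done there with a small mod-$3$ argument) and $m_{3,3}$ into one cleaner estimate.
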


\begin{proof}
Assume to the contrary that $n_3\ge 1$.

In item~1, since $P(T)\le 10<30$, Lemma~\ref{lem:no4-threshold} (first item) gives
$n_4=0$, and Lemma~\ref{lem:no5-threshold} gives $n_5=0$.
In item~2, $P(T)\le 22<30$ again yields $n_4=0$ from
Lemma~\ref{lem:no4-threshold} (first item), and $n_5=0$ from
Lemma~\ref{lem:no5-threshold}.
Hence in both items,
\[
n_4=n_5=0.
\]

Since $F(1,3)=24>22$, we must have
\[
m_{1,3}=0.
\]
In item~1, $P(T)\le 10$ and $F(2,3)=18$ also imply $m_{2,3}=0$.
In item~2, $m_{2,3}\ge 2$ would give $P(T)\ge 36>22$, so only $m_{2,3}=1$ needs
to be excluded.

Using the degree--$3$ handshake together with $n_4=n_5=0$ and $m_{1,3}=0$, we obtain
\[
m_{2,3}+2m_{3,3}+m_{3,6}=3n_3.
\]
If $m_{2,3}=1$, then $2m_{3,3}+m_{3,6}\equiv 2\pmod 3$.
If $m_{3,3}\ge 1$, then $P(T)\ge 18+16=34>22$, impossible; hence $m_{3,3}=0$.
Thus $m_{3,6}\equiv 2\pmod 3$, so $m_{3,6}\ge 2$ and
\[
P(T)\ge 18+2\cdot 4=26>22,
\]
again impossible.
Therefore,
\[
m_{2,3}=0.
\]

With $n_4=n_5=0$, $m_{1,3}=0$, and $m_{2,3}=0$, the degree--$3$ handshake reduces to
\[
2m_{3,3}+m_{3,6}=3n_3.
\]
If $m_{3,3}\ge 1$, then $n_3\ge 2$, and the cheapest possibility
$m_{3,3}=1$, $m_{3,6}=4$ gives
\[
P(T)\ge 16+4\cdot 4=32>22,
\]
impossible. Hence
\[
m_{3,3}=0,
\qquad
m_{3,6}=3n_3\ge 3.
\]

Since $F(3,6)=4$, we have $P(T)\ge 4m_{3,6}$.
Thus:
\begin{itemize}
\item in item~1, $P(T)\ge 12>10$, a contradiction;
\item in item~2, $P(T)\le 22$ forces $m_{3,6}=3$ and hence $n_3=1$.
\end{itemize}

In item~2, the three $(3,6)$--edges already contribute $12$ to $P(T)$.
Since $F(2,2)=22$ and $F(1,2)=30$, any edge of type $(2,2)$ or $(1,2)$ would force
$P(T)>22$. Hence
\[
m_{2,2}=0,
\qquad
m_{1,2}=0.
\]
Also $P(T)\le 22$ implies $m_{6,6}\le 1$.

Let $x=m_{1,6}$, $y=m_{2,6}$, and $b=m_{6,6}\in\{0,1\}$.
Counting edges gives
\[
x+y+m_{3,6}+b=n-1,
\]
and the degree--$6$ handshake yields
\[
x+y+m_{3,6}+2b=6n_6.
\]
Subtracting these two equations gives
\[
b=6n_6-(n-1),
\qquad\text{so}\qquad
b\equiv -(n-1)\pmod 6.
\]
Since $b\in\{0,1\}$, this forces
\[
n\equiv 1 \ \text{or}\ 0 \pmod 6,
\]
contradicting the hypothesis in item~2.

We conclude that $n_3=0$ holds in both items, completing the proof.
\end{proof}

\begin{lemma}
\label{lem:no3-22-30}
Let $\Delta=6$ and let $T$ be a tree on $n$ vertices.
Assume that $n\not\equiv \{0,1,2\}\pmod 6$ and $22<P(T)\le 30$. 
Then $n_3=0$.
\end{lemma}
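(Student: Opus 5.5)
The plan is to argue by contradiction. Assume $n_3\ge 1$, $22<P(T)\le 30$ and $n\bmod 6\in\{3,4,5\}$. We first clear away the other middle degrees. Since $P(T)\le 30$, the first item of Lemma~\ref{lem:no5-threshold} gives $n_5=0$. For $n_4$, if $P(T)<30$ the first item of Lemma~\ref{lem:no4-threshold} applies, and if $P(T)=30$ the second item applies because $n\not\equiv 1\pmod 6$; either way $n_4=0$.

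With $n_4=n_5=0$, the degree-$3$ handshake becomes
\[
m_{1,3}+m_{2,3}+2m_{3,3}+m_{3,6}=3n_3,
\]
and the relevant penalties are $F(1,3)=24$, $F(2,3)=18$, $F(3,3)=16$ and $F(3,6)=4$. We eliminate the expensive edge types one at a time, always using the budget $30$ together with this handshake taken modulo $3$.

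\textbf{Excluding $(1,3)$-edges.} If $m_{1,3}=1$, only $6$ units of budget remain. The handshake then forces $m_{2,3}+2m_{3,3}+m_{3,6}\equiv 2\pmod 3$, so $m_{2,3}+2m_{3,3}+m_{3,6}\ge 2$. Every edge type appearing there costs at least $4$, so this requires at least $8>6$, a contradiction. If $m_{1,3}\ge 2$, then $P(T)\ge 48$. Hence $m_{1,3}=0$.

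\textbf{Excluding $(2,3)$-edges.} If $m_{2,3}\ge 2$, then $P(T)\ge 36$. If $m_{2,3}=1$ and $m_{3,3}\ge 1$, then $P(T)\ge 34$. If $m_{2,3}=1$ and $m_{3,3}=0$, the handshake gives $m_{3,6}\equiv 2\pmod 3$. The case $m_{3,6}\ge 5$ costs $\ge 38$, so $m_{3,6}=2$, $n_3=1$ and $P(T)\ge 26$. The remaining budget of $4$ admits no further edge type of $S$, since $(3,6)$ is already fixed by the handshake. Put $x=m_{1,6}$ and $y=m_{2,6}$. Counting edges gives $x+y+3=n-1$, and the degree-$6$ handshake gives $x+y+2=6n_6$. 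Together these yield $n-2=6n_6$, contradicting $n\not\equiv 2\pmod 6$. Hence $m_{2,3}=0$.

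\textbf{Excluding $(3,3)$-edges.} Now $2m_{3,3}+m_{3,6}=3n_3$. If $m_{3,3}\ge 1$, the cheapest option is $m_{3,3}=1$, $m_{3,6}=4$, costing $32>30$. Hence $m_{3,3}=0$ and $m_{3,6}=3n_3$, so $P(T)\ge 12n_3$. This forces $n_3\le 2$.

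\textbf{The case $n_3=2$.} Here $m_{3,6}=6$ and $P(T)\ge 24$. The remaining budget of $6$ is below every other entry of Table~\ref{tab:F-delta6-row}, so no other type from $S$ occurs. Edge counting gives $x+y+6=n-1$, and the degree-$6$ handshake gives $x+y+6=6n_6$. Hence $n\equiv 1\pmod 6$, a contradiction.

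\textbf{The case $n_3=1$.} Here $m_{3,6}=3$, so $P(T)\ge 12$, and the extra penalty must lie in $(10,18]$. With $n_3=1$ and $n_4=n_5=0$, the types of $S$ still available are $(1,2)$, $(2,2)$ and $(6,6)$. The costs are $F(1,2)=30$ and $F(2,2)=22$, both above $18$, so only $(6,6)$-edges at cost $10$ each can occur. Thus $P(T)=12+10m_{6,6}\in\{12,22,32,\dots\}$. None of these values lies in $(22,30]$, which is the final contradiction.

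The main obstacle I expect is bookkeeping rather than any single idea. At each step one must make sure that the leftover budget really excludes every other edge type in $S$, including $(1,2)$, $(2,2)$ and $(6,6)$. The congruence subcases (the $m_{2,3}=1$ case and the $n_3=2$ case) need the edge-count and degree-$6$ handshake identities, and these must be written with exactly the surviving edge types.
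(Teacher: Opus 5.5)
Your proposal is correct and follows the paper's proof essentially step for step. Like the paper, you clear $n_4,n_5$ via the earlier lemmas, eliminate $(1,3)$-, $(2,3)$- and $(3,3)$-edges using the budget $30$ and the degree-$3$ handshake, and use the edge-count/degree-$6$ congruence in the $m_{2,3}=1$ and $m_{3,6}=6$ subcases. The one difference is at $m_{3,6}=3$: you rule out $P(T)=12+10m_{6,6}$ using the hypothesis $P(T)>22$, whereas the paper uses $m_{6,6}\le 1$ and $m_{6,6}=6n_6-(n-1)$ to force $n\equiv 0,1\pmod 6$, so its argument never actually needs $P(T)>22$.
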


\begin{proof}
Assume to the contrary that $n_3\ge 1$.

Since $P(T)\le 30$, Lemma~\ref{lem:no5-threshold} (first item) yields $n_5=0$.
Moreover, as $n\not\equiv 1\pmod 6$ and $P(T)\le 30$,
Lemma~\ref{lem:no4-threshold} (first and second items) yields $n_4=0$.
Hence
\[
n_4=n_5=0.
\]

We now prove that $m_{1,3}=0$.
Assume $m_{1,3}\ge 1$.
Since $n_3\ge 1$, the degree--$3$ handshake equation forces at least three edges
having a degree--$3$ endpoint.
One of them is of type $(1,3)$, and the remaining two contribute at least
$2\cdot F(3,6)=8$, because $F(3,i)\ge F(3,6)=4$ for every $i$.
Therefore
\[
P(T)\ \ge\ F(1,3)+2\cdot 4\ =\ 24+8\ =\ 32\ >30,
\]
a contradiction. Hence $m_{1,3}=0$.

Next we show that $m_{2,3}=0$.
Assume for a contradiction that $m_{2,3}\ge 1$.

If $m_{3,3}\ge 1$, then
\[
P(T)\ge F(2,3)+F(3,3)=18+16=34>30,
\]
which is impossible.

Thus $m_{3,3}=0$.
Using the degree--$3$ handshake together with $n_4=n_5=0$ and $m_{1,3}=0$, we obtain
\[
m_{2,3}+m_{3,6}=3n_3.
\]
If $m_{2,3}\ge 2$, then $P(T)\ge 2F(2,3)=36>30$, impossible.
Hence $m_{2,3}=1$.
The above equation then yields $m_{3,6}\equiv 2\pmod 3$, so
$m_{3,6}\in\{2,5,8,\dots\}$.
Since $P(T)\le 30$, we must have $m_{3,6}=2$, and therefore
\[
P(T)\ \ge\ 18+2\cdot 4\ =\ 26.
\]

We now rule out the configuration
$m_{2,3}=1$, $m_{3,3}=0$, $m_{3,6}=2$.
Because $n_4=n_5=0$ and $m_{1,3}=m_{3,3}=0$, every $S$--edge other than $(2,3)$ and
$(3,6)$ must be of type $(2,2)$ or $(1,2)$.
Since $P(T)\le 30$ and the contribution above is already $26$, we have room for
at most $4$ additional $F$--cost.
But $F(2,2)=22$ and $F(1,2)=30$, hence
\[
m_{2,2}=m_{1,2}=0.
\]
Moreover, $F(6,6)=10$ implies $m_{6,6}=0$, otherwise $P(T)\ge 36>30$.
Thus, in this subcase the only $S$--edges are precisely one $(2,3)$ and two $(3,6)$ edges.

Let $x=m_{1,6}$ and $y=m_{2,6}$.
Counting edges gives
\[
x+y+m_{2,3}+m_{3,6}=n-1,
\]
and the degree--$6$ handshake equation (using $n_4=n_5=0$ and $m_{6,6}=0$) reads
\[
x+y+m_{3,6}=6n_6.
\]
Subtracting yields
\[
-m_{2,3}=6n_6-(n-1).
\]
Since $m_{2,3}=1$, we obtain
\[
n-1\equiv 1\pmod 6
\qquad\text{and therefore}\qquad
n\equiv 2\pmod 6,
\]
contradicting the standing assumption $n\not\equiv 2\pmod 6$.
Hence this configuration is impossible, and we conclude $m_{2,3}=0$.

With $n_4=n_5=0$, $m_{1,3}=0$, and $m_{2,3}=0$, the degree--$3$ handshake reduces to
\[
2m_{3,3}+m_{3,6}=3n_3.
\]
If $m_{3,3}\ge 1$, then $n_3\ge 2$, and the cheapest possibility
$m_{3,3}=1$, $m_{3,6}=4$ yields
\[
P(T)\ge 16+4\cdot 4=32>30,
\]
impossible.
Hence
\[
m_{3,3}=0,
\qquad
m_{3,6}=3n_3\ge 3.
\]
Since $F(3,6)=4$, we have $P(T)\ge 12$.
Under $P(T)\le 30$, it follows that $m_{3,6}\in\{3,6\}$.

Because $m_{3,6}\ge 3$ already contributes at least $12$ to $P(T)$, any positive
$m_{2,2}$ would force $P(T)\ge 12+22= 34>30$, and any positive $m_{1,2}$ would force
$P(T)\ge 12+30= 42>30$.
Thus
\[
m_{2,2}=0,
\qquad
m_{1,2}=0.
\]
Moreover, $P(T)\le 30$ implies $m_{6,6}\le 1$.

Let $x=m_{1,6}$, $y=m_{2,6}$ and $b=m_{6,6}\in\{0,1\}$.
Counting edges gives
\[
x+y+m_{3,6}+b=n-1,
\]
and the degree--$6$ handshake equation reads
\[
x+y+m_{3,6}+2b=6n_6.
\]
Subtracting yields
\[
b=6n_6-(n-1),
\qquad\text{so}\qquad
b\equiv -(n-1)\pmod 6.
\]
Since $b\in\{0,1\}$, this forces
\[
n\equiv 0 \ \text{or}\ 1\pmod 6,
\]
contradicting the assumption $n\not\equiv \{0,1\}\pmod 6$.

Therefore $n_3=0$, completing the proof.
\end{proof}

\begin{lemma}
\label{lem:no3-30-40-final-v5}
Let $\Delta=6$ and let $T$ be a tree on $n$ vertices such that
\[
30<P(T)\le 40
\qquad\text{and}\qquad
n\equiv 3 \ \text{or}\ 4\pmod 6.
\]
Then either $n_3=0$, or $n\equiv 3\pmod 6$ and the following exceptional pattern holds:
\begin{itemize}
\item[\textup{(E1)}] $P(T)=40$, $m_{2,3}=2$, $m_{3,6}=1$, $m_{1,3}=m_{3,3}=0$,
and $m_{i,j}=0$ for all $(i,j)\in S\setminus\{(2,3),(3,6)\}$.
\end{itemize}
\end{lemma}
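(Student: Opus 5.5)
The plan is to first reduce to degrees $\{1,2,3,6\}$, then run a finite case analysis on the edges at degree-$3$ vertices using the budget $P(T)\le 40$, and close each case with a single congruence identity. Since $n\equiv 3,4\pmod 6$, we have $n\not\equiv 0,1,2\pmod 6$. Lemma~\ref{lem:no5-threshold} (second item) then gives $n_5=0$, and Lemma~\ref{lem:no4-threshold} (third item) gives $n_4=0$. The only $S$-edges that can occur are therefore of types $(1,2),(1,3),(2,2),(2,3),(3,3),(3,6),(6,6)$; type $(1,1)$ is impossible for $n\ge 3$.

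Next we derive the congruence identity. Subtract the edge count $n-1=\sum m_{i,j}$ from the degree-$6$ handshake \eqref{eq:tree-ni-6}. This gives
\[
6n_6-(n-1)=m_{6,6}-\bigl(m_{1,2}+m_{1,3}+m_{2,2}+m_{2,3}+m_{3,3}\bigr)=:R .
\]
For $n\equiv 3\pmod 6$ we need $R\equiv 4\pmod 6$, and for $n\equiv 4\pmod 6$ we need $R\equiv 3\pmod 6$. Every configuration surviving the budget will be tested against this. Assume $n_3\ge 1$ and use the degree-$3$ handshake $m_{1,3}+m_{2,3}+2m_{3,3}+m_{3,6}=3n_3$, together with $F(1,3)=24$, $F(2,3)=18$, $F(3,3)=16$, $F(3,6)=4$, $F(6,6)=10$, $F(2,2)=22$ and $F(1,2)=30$.

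The cases are organised by $m_{1,3}$ and $m_{2,3}$.
\begin{itemize}
\item[(a)] Suppose $m_{1,3}\ge 1$. Then $m_{1,3}=1$, since two such edges already cost $48$. The two remaining incidences at that degree-$3$ vertex cost at least $8$. The budget excludes $(2,3)$ and $(3,3)$, and the handshake forces $m_{3,6}\equiv 2\pmod 3$, so $m_{3,6}=2$ and $P=32$ with nothing else present. Then $R=-1\equiv 5$, a contradiction.
\item[(b)] Suppose $m_{1,3}=0$ and $m_{2,3}=2$. This costs $36$, so the handshake forces $m_{3,6}=1$, $P=40$ and all other $S$-edges vanish. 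Then $R=-2\equiv 4$, which forces $n\equiv 3\pmod 6$; this is exactly \textup{(E1)}.
\item[(c)] Suppose $m_{2,3}=1$. If $m_{3,3}\ge 1$ the cost already exceeds $40$ (it needs $m_{3,6}=3$, giving $46$). So $m_{3,3}=0$ and $m_{3,6}\in\{2,5\}$.
\begin{itemize}
\item[(c1)] $m_{3,6}=2$ costs $26$, so reaching $P>30$ requires $m_{6,6}=1$, giving $R=0$.
\item[(c2)] $m_{3,6}=5$ costs $38$, so $m_{6,6}=0$ and $R=-1$.
\end{itemize}
Both are excluded. The case $m_{2,3}\ge 3$ costs at least $54$.
\item[(d)] Suppose $m_{1,3}=m_{2,3}=0$.
\begin{itemize}
\item[(d1)] $m_{3,3}=1$ forces $m_{3,6}=4$, so $P\ge 32$ and $m_{6,6}=0$, giving $R=-1$.
\item[(d2)] $m_{3,3}\ge 2$ requires at least three degree-$3$ vertices and blows the budget.
\item[(d3)] $m_{3,3}=0$ gives $m_{3,6}=3n_3\in\{3,6,9\}$. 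Since $(1,2)$ is unaffordable, the remaining cost comes from $m_{2,2}\le 1$ and $m_{6,6}$. The subcases with $P\in(30,40]$ are $m_{3,6}=3$ with $m_{2,2}=1$ ($R=-1$), $m_{3,6}=3$ with $m_{6,6}=2$ ($R=2$), $m_{3,6}=6$ with $m_{6,6}\le 1$ ($R\in\{0,1\}$), and $m_{3,6}=9$ ($R=0$). None is $\equiv 3,4\pmod 6$.
\end{itemize}
\end{itemize}

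The main obstacle is completeness of this enumeration rather than any single step. The key is to organise it by the pair $(m_{1,3},m_{2,3})$ and let the handshake residue of $m_{3,6}$ modulo $3$ cut the branching. I also need to justify carefully that $m_{3,3}=2$ forces at least three degree-$3$ vertices, by acyclicity, and hence too many $(3,6)$-edges. The single identity for $R$ then disposes of every surviving branch except \textup{(E1)}.
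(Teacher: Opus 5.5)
Your proof is correct and follows essentially the same route as the paper. You get $n_4=n_5=0$ from the earlier threshold lemmas, branch on $m_{1,3}$, $m_{2,3}$, $m_{3,3}$ with the degree-$3$ handshake fixing $m_{3,6}$ modulo $3$ and the budget $P(T)\le 40$ pruning each branch, and then eliminate every surviving configuration except \textup{(E1)} by subtracting the edge count from the degree-$6$ handshake. The only difference is organisational: you record that subtraction once as the residue condition $R\equiv 4\pmod 6$ (for $n\equiv 3$) or $R\equiv 3\pmod 6$ (for $n\equiv 4$), whereas the paper rederives it separately in each subcase.
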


\begin{proof}
Assume that $n_3\ge 1$.

\medskip
Since $P(T)\le 40$ and $n\not\equiv 0\pmod 6$ (here $n\equiv 3,4\pmod 6$),
Lemma~\ref{lem:no5-threshold} (second item) implies $n_5=0$.
Moreover, as $30<P(T)\le 40$ and $n\not\equiv\{0,1,2\}\pmod 6$ (true for $n\equiv 3,4$),
Lemma~\ref{lem:no4-threshold} (third item) yields $n_4=0$.
Thus
\begin{equation}
n_4=n_5=0.
\label{eq:n4n5-zero-30-40-v5}
\end{equation}

With \eqref{eq:n4n5-zero-30-40-v5}, the degree--$3$ handshake row \eqref{eq:tree-ni-3} becomes
\begin{equation}
m_{1,3}+m_{2,3}+2m_{3,3}+m_{3,6}=3n_3.
\label{eq:H3-30-40-v5}
\end{equation}

\medskip
We first show that
\begin{equation}
m_{1,3}=0.
\label{eq:m13-zero-30-40-v5}
\end{equation}
Indeed, if $m_{1,3}\ge 1$, then $P(T)\ge F(1,3)=24$. If $m_{1,3}\ge 2$, then $P(T)\ge 2F(1,3)>40$, which is a contradiction.
Since $n_3\ge 1$, equation \eqref{eq:H3-30-40-v5} forces at least two additional edges incident to
degree $3$, and under \eqref{eq:n4n5-zero-30-40-v5} each such edge has type $(3,3)$ or $(3,6)$.
If one of them were $(3,3)$, then $P(T)\ge 24+16+4=44>40$, impossible; hence both must be $(3,6)$.
Thus $P(T)\ge 24+2\cdot 4=32$, and the remaining budget to stay $\le 40$ is at most $8$.
No other $(i,j)\in S$ can appear because every additional $S$--edge has $F(i,j)\ge 10$.
Consequently,
\[
m_{1,3}=1,\quad m_{3,6}=2,\quad m_{2,3}=m_{3,3}=0,
\quad\text{and } m_{i,j}=0 \text{ for all }(i,j)\in S\setminus\{(1,3),(3,6)\}.
\]

In particular, $m_{6,6}=0$. Subtracting the edge count equation from the degree–$6$ handshake equation (as in the previous cases) yields $-m_{1,3}=6n_6-(n-1)$, so $n\equiv 2\pmod 6$,
contradicting $n\equiv 3,4\pmod 6$. Hence \eqref{eq:m13-zero-30-40-v5} holds.

\medskip
Now consider $m_{2,3}$.
Since $3F(2,3)=54>40$, we have $m_{2,3}\le 2$.

If $m_{2,3}=2$, then $P(T)\ge 2F(2,3)=36$.
Equation \eqref{eq:H3-30-40-v5} forces at least one additional edge incident to degree $3$,
and the cheapest possibility is $(3,6)$ with cost $4$.
Thus $P(T)\ge 40$, and since $P(T)\le 40$ we must have $P(T)=40$ and exactly one such edge.
Therefore
\[
m_{2,3}=2,\qquad m_{3,6}=1,\qquad m_{1,3}=m_{3,3}=0,
\]
and all other $S$--variables are $0$.
As in the standard congruence check (subtracting the edge count from the degree--$6$ handshake row),
this forces $n\equiv 3\pmod 6$.
Hence, for $n\equiv 3\pmod 6$ we obtain precisely the exceptional pattern \textup{(E1)}.

%It remains to exclude $m_{2,3}\in\{0,1\}$ under our assumptions.

\medskip
We now exclude the case $m_{2,3}=1$.

Assume $m_{2,3}=1$. With $m_{1,3}=0$, equation \eqref{eq:H3-30-40-v5} becomes
\[
1+2m_{3,3}+m_{3,6}=3n_3.
\]

First we claim that $m_{3,3}=0$.
Indeed, if $m_{3,3}\ge 1$, then $n_3\ge 2$ and the cheapest way to satisfy the
above identity is $m_{3,3}=1$ and $m_{3,6}=3$, which yields
\[
P(T)\ \ge\ F(2,3)+F(3,3)+3F(3,6)
\ =\ 18+16+12\ =\ 46\ >40,
\]
a contradiction. Hence $m_{3,3}=0$.

With $m_{3,3}=0$, the degree--$3$ handshake reduces to
\[
1+m_{3,6}=3n_3,
\]
so $m_{3,6}\equiv 2\pmod 3$ and therefore $m_{3,6}\in\{2,5,8,\dots\}$.
Since $P(T)\le 40$ and $F(3,6)=4$, we must have $m_{3,6}\in\{2,5\}$.

\smallskip
\noindent
\emph{Subcase $m_{3,6}=5$.}
Then $n_3=2$ and
\[
P(T)\ \ge\ F(2,3)+5F(3,6)\ =\ 18+20\ =\ 38.
\]
Thus there is room for at most $2$ additional penalty, so no further $(i,j)\in S$
can occur; in particular,
\[
m_{6,6}=m_{2,2}=m_{1,2}=m_{1,3}=m_{3,3}=0.
\]
Let $x=m_{1,6}$ and $y=m_{2,6}$.
Counting edges gives
\[
x+y+m_{2,3}+m_{3,6}=n-1,
\]
while the degree--$6$ handshake row yields (since $m_{6,6}=0$)
\[
x+y+m_{3,6}=6n_6.
\]
Subtracting these two equalities gives
\[
-m_{2,3}=6n_6-(n-1),
\]
so $6n_6-(n-1)=-1$, i.e.\ $n\equiv 2\pmod 6$, contradicting $n\equiv 3,4\pmod 6$.
Hence $m_{3,6}\neq 5$.

\smallskip
\noindent
\emph{Subcase $m_{3,6}=2$.}
Then $n_3=1$ and
\[
P(T)\ \ge\ F(2,3)+2F(3,6)\ =\ 18+8\ =\ 26.
\]
To keep $P(T)\le 40$, we may have $m_{6,6}\le 1$ (since $F(6,6)=10$),
and we must have $m_{2,2}=m_{1,2}=0$ (since $F(2,2)=22$ and $F(1,2)=30$).
Let $b=m_{6,6}\in\{0,1\}$ and again write $x=m_{1,6}$, $y=m_{2,6}$.
Edge counting gives
\[
x+y+m_{2,3}+m_{3,6}+b=n-1,
\]
and the degree--$6$ handshake gives
\[
x+y+m_{3,6}+2b=6n_6.
\]
Subtracting yields
\[
b-m_{2,3}=6n_6-(n-1).
\]
Since $m_{2,3}=1$ and $b\in\{0,1\}$, we get $6n_6-(n-1)\in\{-1,0\}$, hence
$n\equiv 1$ or $2\pmod 6$, again contradicting $n\equiv 3,4\pmod 6$.
Thus $m_{3,6}\neq 2$ as well.

\smallskip
Both subcases lead to contradictions, so $m_{2,3}\neq 1$.
Therefore,
\[
m_{2,3}=0.
\]

\smallskip
\noindent
Finally, assume $m_{2,3}=0$ (and still $m_{1,3}=0$).
Then \eqref{eq:H3-30-40-v5} reduces to
\[
2m_{3,3}+m_{3,6}=3n_3.
\]

\smallskip
\noindent
\textbf{Case 1: $m_{3,3}\ge 2$.}
Then $2m_{3,3}\ge 4$, hence
\[
m_{3,6}=3n_3-2m_{3,3}\le 3n_3-4.
\]
In particular $n_3\ge 3$.
The cheapest realization is $m_{3,3}=2$ and $m_{3,6}=5$, which yields
\[
P(T)\ \ge\ 2F(3,3)+5F(3,6)\ =\ 2\cdot 16 + 5\cdot 4\ =\ 52\ >40,
\]
a contradiction. Hence $m_{3,3}\le 1$.

\smallskip
\noindent
\textbf{Case 2: $m_{3,3}=1$.}
Then $2+m_{3,6}=3n_3$, so $m_{3,6}\equiv 1\pmod 3$ and $n_3\ge 2$.
The smallest possibility is $m_{3,6}=4$, giving
\[
P(T)\ \ge\ F(3,3)+4F(3,6)\ =\ 16+16\ =\ 32.
\]
To stay within $P(T)\le 40$, all remaining $S$–edges must have total cost at most $8$.
Since the smallest positive value in $S$ besides $(3,6)$ is $F(6,6)=10$,
no further $S$–edges are allowed.
Thus $m_{6,6}=m_{2,2}=m_{1,2}=0$.

Let $x=m_{1,6}$ and $y=m_{2,6}$.
Counting edges gives
\[
x+y+5=n-1,
\]
and the degree–$6$ handshake equation yields
\[
x+y+4=6n_6.
\]
Subtracting gives
\[
6n_6-(n-1)=-1,
\qquad\text{so}\qquad
n\equiv 2\pmod 6,
\]
contradicting $n\equiv 3,4\pmod 6$.
Hence this case is impossible.

\smallskip
\noindent
\textbf{Case 3: $m_{3,3}=0$.}
Then $m_{3,6}=3n_3\in\{3,6,9\}$ since $P(T)\le 40$.

\smallskip
\noindent
\textbf{Subcase 3.1: $m_{3,6}=9$.}
Then
\[
P(T)\ \ge\ 9F(3,6)\ =\ 36.
\]
To exceed $30$ while keeping $P(T)\le 40$, we may add at most $4$ more penalty.
But every other $S$–edge has $F(i,j)\ge 10$, so no additional $S$–edges are possible.
Thus $m_{6,6}=m_{2,2}=m_{1,2}=0$.

Let $x=m_{1,6}$ and $y=m_{2,6}$.
Counting edges gives
\[
x+y+9=n-1,
\]
and the degree–$6$ handshake equation yields
\[
x+y+9=6n_6.
\]
Subtracting gives
\[
6n_6-(n-1)=0,
\qquad\text{so}\qquad
n\equiv 1\pmod 6,
\]
contradicting $n\equiv 3,4\pmod 6$.
Hence this subcase is impossible.

\smallskip
\noindent
\textbf{Subcase 3.2: $m_{3,6}=6$.}
Then
\[
P(T)\ \ge\ 6F(3,6)\ =\ 24.
\]
To exceed $30$, we must add at least $7$ more penalty.
The only admissible $S$–edges are $(6,6)$ (cost $10$),
since $(1,2)$ (cost $30$) and  $(2,2)$ (cost $22$) would force $P(T)>40$.

Thus $m_{1,2}=m_{2,2}=0$ and $m_{6,6}\in\{1,2\}$.

Let $x=m_{1,6}$, $y=m_{2,6}$ and $b=m_{6,6}$.
Counting edges gives
\[
x+y+6+b=n-1,
\]
and the degree–$6$ handshake equation yields
\[
x+y+6+2b=6n_6.
\]
Subtracting gives
\[
b=6n_6-(n-1),
\qquad\text{so}\qquad
b\equiv -(n-1)\pmod 6.
\]
If $b=1$, then $n\equiv 0\pmod 6$;  
if $b=2$, then $n\equiv 5\pmod 6$.
Both contradict $n\equiv 3,4\pmod 6$.
Hence this subcase is impossible.

\smallskip
\noindent
\textbf{Subcase 3.3: $m_{3,6}=3$.}
Then $n_3=1$ and the three $(3,6)$–edges contribute $12$ to $P(T)$.
To exceed $30$ while keeping $P(T)\le 40$, we must add between $19$ and $28$ more penalty.
The only admissible $S$–edges are $(6,6)$ (cost $10$) and $(2,2)$ (cost $22)$.

Thus the only possibilities are
\[
(b,c)=(0,1)\quad\text{or}\quad(b,c)=(2,0),
\]
where $b=m_{6,6}$ and $c=m_{2,2}$.

Let $x=m_{1,6}$ and $y=m_{2,6}$.
Counting edges gives
\[
x+y+3+b+c=n-1,
\]
and the degree–$6$ handshake equation yields
\[
x+y+3+2b=6n_6.
\]
Subtracting gives
\[
b-c=6n_6-(n-1),
\qquad\text{so}\qquad
b-c\equiv -(n-1)\pmod 6.
\]
If $(b,c)=(0,1)$, then $b-c=-1$ and $n\equiv 2\pmod 6$.
If $(b,c)=(2,0)$, then $b-c=2$ and $n\equiv 5\pmod 6$.
Both contradict $n\equiv 3,4\pmod 6$.
Hence this subcase is impossible.

\medskip
\noindent
Since all cases with $m_{2,3}=0$ and $n_3\ge 1$ lead to contradictions,
we conclude that $n_3\ge 1$ can occur only in the exceptional pattern \textup{(E1)}
with $n\equiv 3\pmod 6$.
\end{proof}

%========================================================
\section{Extremal configurations for $P(T)$ and $\sigma(T)$}
\label{sec:extremal}

Having eliminated all but a small set of admissible degree patterns, we now proceed
to determine the exact minimum value of $P(T)$ for each residue class of $n$ modulo $6$
and to describe all trees attaining this minimum. Since $\sigma(T)$ differs from a
constant depending only on $n$ by $-P(T)$, this also yields the complete solution to
the problem of maximizing $\sigma(T)$ over $\mathcal{T}_n^{(6)}$.

\bigskip

We determine the minimum value
\[
P_{\min}(n)=\min\{P(T):\,T\in\mathcal{T}_n^{(6)}\}
\]
for every residue class of $n$ modulo $6$ and describe all minimizing trees.
Equivalently, this yields the maximum value of $\sigma(T)$ among all trees in
$\mathcal{T}_n^{(6)}$, since $\sigma(T)$ differs from a constant term depending only on $n$
by $-P(T)$.
The minimum penalty depends only on $n \bmod 6$ and takes exactly six values:
\[
P_{\min}(n)=
\begin{cases}
0,  & n\equiv 1 \pmod 6,\\
10, & n\equiv 0 \pmod 6,\\
20, & n\equiv 5 \pmod 6,\\
22, & n\equiv 2 \pmod 6,\\
30, & n\equiv 4 \pmod 6,\\
40, & n\equiv 3 \pmod 6.
\end{cases}
\]
In the subsequent theorems, for each residue class we give an explicit parameter
description (in terms of $m_{i,j}$ and $n_i$) of all minimizing trees.

\subsection{Parameter descriptions of minimizers}
\label{subsec:param}

% (Place your six theorems here in the current order.)
% Theorems: P0, P10, P20, P22, P30, P40

\begin{theorem}
\label{thm:delta6-P0}
Let $n\ge 7$.
Then
\[
P_{\min}(n)=0
\quad\text{if and only if}\quad
n\equiv 1 \pmod 6.
\]
Moreover, if $T\in\mathcal{T}_n^{( 6)}$ satisfies $P(T)=0$, then
\[
m_{1,6}=\frac{2n+4}{3},\qquad
m_{2,6}=\frac{n-7}{3},\qquad
m_{i,j}=0 \ \text{for all }(i,j)\in S,
\]
and the corresponding degree distribution is given by
\[
n_1=\frac{2n+4}{3},\qquad
n_2=\frac{n-7}{6},\qquad
n_6=\frac{n-1}{6},\qquad
n_3=n_4=n_5=0.
\]
%In particular, the minimizing configuration is unique.
\end{theorem}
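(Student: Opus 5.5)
The plan is to use that every coefficient $F(i,j)$ with $(i,j)\in S$ is strictly positive (Table~\ref{tab:F-delta6-row}; the smallest is $F(3,6)=4$). Hence $P(T)\ge 0$ for every $T\in\mathcal{T}_n^{(6)}$, and $P(T)=0$ holds exactly when $m_{i,j}=0$ for all $(i,j)\in S$. In that case the only edges are of types $(1,6)$ and $(2,6)$. Substituting into \eqref{eq:m16-general} and \eqref{eq:m26-general} then gives $m_{1,6}=\frac{2n+4}{3}$ and $m_{2,6}=\frac{n-7}{3}$ directly. Alternatively, one can read these off from the edge count $m_{1,6}+m_{2,6}=n-1$ together with the handshake rows.

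For the degree distribution, I use the rows of the handshake system. Row \eqref{eq:tree-ni-1} gives $n_1=m_{1,6}$, and row \eqref{eq:tree-ni-2} gives $2n_2=m_{2,6}$, so $n_2=\frac{n-7}{6}$. Rows \eqref{eq:tree-ni-3}--\eqref{eq:tree-ni-5} have zero left-hand sides, so $n_3=n_4=n_5=0$. Row \eqref{eq:tree-ni-6} gives $6n_6=m_{1,6}+m_{2,6}=n-1$, so $n_6=\frac{n-1}{6}$.

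\emph{Necessity.} If some $T$ has $P(T)=0$, then $n_6=\frac{n-1}{6}$ must be an integer, so $n\equiv 1\pmod 6$. Conversely, if $n\not\equiv1\pmod 6$, then no tree achieves $P=0$, so $P_{\min}(n)>0$.

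\emph{Sufficiency.} For $n=6k+1$ with $k\ge1$ (so $n\ge 7$), I construct a tree realizing these parameters. Take $k$ vertices $v_1,\dots,v_k$ of degree $6$ and join consecutive ones $v_i,v_{i+1}$ through a new degree-$2$ vertex. This uses $k-1=\frac{n-7}{6}$ degree-$2$ vertices, all of whose edges are of type $(2,6)$. Then attach leaves to every $v_i$ until its degree is $6$. The $v_i$ have $6k-2(k-1)=4k+2$ free slots in total, which gives $n_1=4k+2=\frac{2n+4}{3}$. The vertex count is $k+(k-1)+4k+2=6k+1=n$, and all edges are $(1,6)$ or $(2,6)$, so $P(T)=0$ and $\Delta=6$.

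The only point needing care is the constructive direction: the degree-$2$ connectors must never be adjacent to leaves or to each other, which the path-of-stars construction ensures. The rest is bookkeeping. I would also remark that the parameters are forced, though the tree itself need not be unique: any tree on the $v_i$ subdivided once per edge, with leaves attached, works.
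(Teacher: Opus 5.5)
Your proposal is correct and takes essentially the same route as the paper. Positivity of every $F(i,j)$ on $S$ forces all $S$-multiplicities to vanish when $P(T)=0$; the handshake rows then fix $m_{1,6}$, $m_{2,6}$ and the $n_i$; and integrality of $n_6=(n-1)/6$ gives the congruence. There is one small improvement: for sufficiency you exhibit an explicit tree (the paper's $TT^1_{\mathrm{opt}}(k)$), whereas the paper infers that a tree exists merely from the handshake equations being satisfied.

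Your closing remark is also right: only the parameters are forced, not the tree. For $n=25$, the once-subdivided $K_{1,3}$ with leaves attached also has $P(T)=0$. This contradicts the uniqueness up to isomorphism asserted in the paper's Lemma~\ref{lem:TT1opt-delta6}.
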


\begin{proof}
We establish both implications.

\medskip
We begin by rewriting the relevant variables using the linear constraints imposed by the handshake system.
Let $T\in\mathcal{T}_n^{(6)}$.
From \eqref{eq:m16-general}--\eqref{eq:m26-general},
the quantities $m_{1,6}$ and $m_{2,6}$ can be expressed as linear functions of $n$
and of the remaining variables $m_{i,j}$ with $(i,j)\in S$:
\begin{align}
m_{1,6}
&=
\frac{2n}{3}+\frac{4}{3}
+\sum_{(i,j)\in S} \alpha_{i,j}\,m_{i,j},
\label{eq:m16-general-proof}
\\
m_{2,6}
&=
\frac{n}{3}-\frac{7}{3}
+\sum_{(i,j)\in S} \beta_{i,j}\,m_{i,j}.
\label{eq:m26-general-proof}
\end{align}

Inserting \eqref{eq:m16-general-proof}--\eqref{eq:m26-general-proof} into
$\Sigma(T)=\sum_{1\le i\le j\le 6}\sigma_{i,j}m_{i,j}$, where
$\sigma_{i,j}=(i-j)^2$,
and rearranging terms yields
\[
\Sigma(T)=C(n)-P(T),
\]
with
\[
P(T)=\sum_{(i,j)\in S}F(i,j)\,m_{i,j}.
\]
According to Table~\ref{tab:F-delta6-row}, the coefficients satisfy
\begin{equation}
F(i,j)>0 \quad \text{for all }(i,j)\in S.
\label{eq:F-positive-proof}
\end{equation}

\medskip
This positivity immediately provides an upper bound for $\Sigma(T)$.
Indeed, by \eqref{eq:F-positive-proof} we obtain
\[
\Sigma(T)\le C(n),
\]
and equality holds precisely when all variables indexed by $S$ vanish, that is,
\begin{equation}
m_{i,j}=0 \quad \text{for all }(i,j)\in S.
\label{eq:S-zero-proof}
\end{equation}
Equivalently, this condition is characterized by $P(T)=0$.

\medskip
We next derive the arithmetic restriction on $n$ imposed by the equality case.
Assume that $P(T)=0$ for some $T\in\mathcal{T}_n^{( 6)}$.
Then \eqref{eq:S-zero-proof} holds, and
\eqref{eq:m16-general-proof}--\eqref{eq:m26-general-proof} simplify to
\begin{equation}
m_{1,6}=\frac{2n+4}{3},
\qquad
m_{2,6}=\frac{n-7}{3}.
\label{eq:m16m26-reduced-proof}
\end{equation}
All remaining variables $m_{i,j}$ with $(i,j)\in S$ are therefore zero.
The associated degree counts are given by
\[
n_1=m_{1,6},\qquad
n_2=\frac{m_{2,6}}{2},\qquad
n_6=\frac{m_{1,6}+m_{2,6}}{6},
\qquad
n_3=n_4=n_5=0.
\]
These quantities are integers if and only if $n\equiv 1 \pmod 6$.
Consequently, the condition $P_{\min}(n)=0$ forces $n\equiv 1\pmod 6$.

\medskip
It remains to verify that this congruence condition is also sufficient.
Suppose that $n\equiv 1\pmod 6$.
Then the values in \eqref{eq:m16m26-reduced-proof} are nonnegative integers.
Define
\[
m_{1,6}=\frac{2n+4}{3},\qquad
m_{2,6}=\frac{n-7}{3},\qquad
m_{i,j}=0\ \text{for all }(i,j)\in S.
\]
A direct verification shows that all equations in the handshake system
\eqref{eq:tree-sum-ni}--\eqref{eq:tree-ni-6} are satisfied.
Hence these parameters correspond to a valid tree
$T\in\mathcal{T}_n^{( 6)}$.

For this tree, condition \eqref{eq:S-zero-proof} holds, so $P(T)=0$ and
$\Sigma(T)=C(n)$.

\medskip
Combining the necessary and sufficient parts, we conclude that
$P_{\min}(n)=0$ if and only if $n\equiv 1\pmod 6$.
Moreover, the above argument shows that any tree attaining $P(T)=0$
must realize the stated parameter values, and the minimizing configuration
is therefore unique.
\end{proof}

\begin{theorem}
\label{thm:delta6-P10}
Let $n\ge 12$.
Then
\[
P_{\min}(n)=10
\quad\text{if and only if}\quad
n\equiv 0 \pmod 6.
\]

Moreover, the minimum is attained uniquely by the configuration
\[
m_{6,6}=1,\qquad
m_{1,6}=\frac{2n}{3}+2,\qquad
m_{2,6}=\frac{n-12}{3},
\qquad
m_{i,j}=0\ \text{for all }(i,j)\in S,
\]
and the corresponding degree counts are
\[
n_1=\frac{2n}{3}+2,\qquad
n_2=\frac{n-12}{6},\qquad
n_6=\frac{n}{6},\qquad
n_3=n_4=n_5=0.
\]
\end{theorem}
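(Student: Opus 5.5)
The plan is to classify every tree $T\in\mathcal{T}_n^{(6)}$ with $P(T)\le 10$ completely, using the exclusion lemmas of Section~\ref{sec:exclusions} together with the congruence argument already used repeatedly there. Then I check that the resulting configuration is realizable when $n\equiv 0\pmod 6$ and $n\ge 12$. I read the displayed condition ``$m_{i,j}=0$ for all $(i,j)\in S$'' as meaning all $(i,j)\in S\setminus\{(6,6)\}$, since $m_{6,6}=1$.

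\emph{Step 1 (reduction).} Let $P(T)\le 10$. Lemma~\ref{lem:no5-threshold} (first item) gives $n_5=0$. Lemma~\ref{lem:no4-threshold} (first item) gives $n_4=0$. Lemma~\ref{lem:no3-threshold} (first item) gives $n_3=0$. Hence the only $S$-edges that can occur are of types $(1,1),(1,2),(2,2),(6,6)$, with penalties $40,30,22,10$ by Table~\ref{tab:F-delta6-row}. Since $P(T)\le 10$, this forces $m_{1,1}=m_{1,2}=m_{2,2}=0$ and $b:=m_{6,6}\in\{0,1\}$, so $P(T)=10b$.

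\emph{Step 2 (congruence).} Write $x=m_{1,6}$ and $y=m_{2,6}$. Counting edges gives $x+y+b=n-1$. The degree-$6$ handshake \eqref{eq:tree-ni-6} gives $x+y+2b=6n_6$. Subtracting, $b=6n_6-(n-1)$. Thus $b=0$ forces $n\equiv 1\pmod 6$, which is the case of Theorem~\ref{thm:delta6-P0}, and $b=1$ forces $n\equiv 0\pmod 6$. This yields both directions of the equivalence:
\begin{itemize}
\item If $n\equiv 0\pmod 6$, every tree satisfies $P(T)\ge 10$, and $P(T)=10$ holds exactly for the stated pattern.
\item Conversely, if $P_{\min}(n)=10$, some tree has $P(T)=10$ with $b=1$, so $n\equiv 0\pmod 6$.
\end{itemize}
Substituting $m_{6,6}=1$ (all other $S$-variables zero) into \eqref{eq:m16-general}--\eqref{eq:m26-general} gives $m_{1,6}=\tfrac{2n}{3}+\tfrac43+\tfrac23=\tfrac{2n}{3}+2$ and $m_{2,6}=\tfrac{n}{3}-\tfrac73-\tfrac53=\tfrac{n-12}{3}$. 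The degree counts then follow from $n_1=m_{1,6}$, $n_2=m_{2,6}/2$ and $n_6=n/6$, the last coming from $b=6n_6-(n-1)=1$. This proves uniqueness of the parameter configuration.

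\emph{Step 3 (existence).} For $n=6k$ with $k\ge 2$, I would build the tree explicitly.
\begin{itemize}
\item Take $k$ hub vertices and join two of them by an edge; this is the single $(6,6)$-edge.
\item Attach each of the remaining $k-2$ hubs to the tree built so far through a path of length two whose middle vertex has degree $2$. This contributes $k-2=\tfrac{n-12}{6}$ degree-$2$ vertices and $2(k-2)$ edges of type $(2,6)$.
\item Fill every hub up to degree $6$ with pendant leaves. This uses $6k-2-2(k-2)=4k+2=\tfrac{2n}{3}+2$ leaves.
\end{itemize}
The vertex count is $k+(k-2)+4k+2=6k=n$, and the tree has maximum degree $6$ and $P(T)=F(6,6)=10$. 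The condition $n\ge 12$ is exactly what makes $k-2\ge 0$.

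The main obstacle is Step~1, namely making sure no cheap $S$-edge such as $(3,6)$ with penalty $4$, or $(4,6)$ and $(2,5)$ with penalties about $7.5$, can slip in below $10$. This is handled entirely by the three earlier lemmas. The rest is linear bookkeeping.
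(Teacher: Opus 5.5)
Your proposal is correct and follows essentially the paper's route: the three exclusion lemmas give $n_3=n_4=n_5=0$, and the penalties $F(1,2)=30$, $F(2,2)=22$ reduce to $P(T)=10\,m_{6,6}$. Comparing the edge count with the degree-$6$ handshake then forces $m_{6,6}=1$ exactly when $n\equiv 0\pmod 6$; the paper phrases this step as integrality of the solved $m_{1,6},m_{2,6},n_2$, and it rules out $P=0$ via Theorem~\ref{thm:delta6-P0}, which is your $b=0$ case. Your explicit existence construction is more rigorous than the paper's claim that satisfying the handshake system yields a valid tree, but you should state that each new degree-$2$ vertex joins a previously placed hub, with attachments arranged (e.g.\ in a chain) so that no hub gets more than six non-leaf neighbours.
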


\begin{proof}
We consider the necessity and sufficiency separately.

\medskip
We first derive restrictions that must hold if the minimum penalty equals $10$.
Assume that
\begin{equation}
P_{\min}(n)=10.
\label{eq:assume-P10}
\end{equation}

By Theorem~\ref{thm:delta6-P0}, the equality $P_{\min}(n)=0$ occurs if and only if
$n\equiv 1\pmod 6$.
Hence, under assumption \eqref{eq:assume-P10}, we necessarily have
\begin{equation}
n\not\equiv 1\pmod 6
\quad\text{and}\quad
P(T)>0 \text{ for all } T\in\mathcal{T}_n^{(6)}.
\label{eq:P-positive}
\end{equation}

Fix a tree $T\in\mathcal{T}_n^{(6)}$ satisfying $P(T)=10$.
We now analyze the structural consequences of this equality.

\medskip
We start by excluding intermediate degrees.
Since $P(T)\le 10$, Lemmas~\ref{lem:no5-threshold},
\ref{lem:no4-threshold}, and \ref{lem:no3-threshold} imply that
\begin{equation}
n_3=n_4=n_5=0.
\label{eq:n345-zero1}
\end{equation}

\medskip
Next, we rule out edge types whose individual contribution exceeds the total
penalty.
From Table~\ref{tab:F-delta6-row} it follows that
$F(1,2)=30>10$ and $F(2,2)=22>10$.
Therefore, the condition $P(T)=10$ forces
\begin{equation}
m_{1,2}=m_{2,2}=0.
\label{eq:m12m22-zero}
\end{equation}

\medskip
Under the constraints \eqref{eq:n345-zero1} and \eqref{eq:m12m22-zero},
the only remaining admissible edge types are $(1,6)$, $(2,6)$, and $(6,6)$.
Since $P(T)>0$ by \eqref{eq:P-positive}, at least one edge indexed by $S$ must be
present.
Among all positive coefficients $F(i,j)$ with $(i,j)\in S$, the smallest value is
$F(6,6)=10$.
Consequently, the entire penalty must be concentrated on a single $(6,6)$–edge,
which yields
\begin{equation}
m_{6,6}=1,
\qquad
m_{i,j}=0 \ \text{for all }(i,j)\in S\setminus\{(6,6)\}.
\label{eq:m66=1}
\end{equation}

\medskip
We now determine the remaining parameters using the handshake equations.
Under the above constraints (\ref{eq:m66=1}), the system reduces to
\[
m_{1,6}+m_{2,6}+1=n-1,
\qquad
m_{1,6}+m_{2,6}+2=6n_6,
\]
together with
\[
n_1=m_{1,6},
\qquad
n_2=\frac{m_{2,6}}{2}.
\]
Solving this system gives
\begin{equation}
m_{1,6}=\frac{2n}{3}+2,
\qquad
m_{2,6}=\frac{n-12}{3}.
\label{eq:m16m26-final}
\end{equation}

These expressions are integers if and only if $n\equiv 0\pmod 6$.
Thus, the condition $P_{\min}(n)=10$ implies
$n\equiv 0\pmod 6$.

\medskip
We now show that this congruence condition is also sufficient.
Assume that $n\equiv 0\pmod 6$.
Define the parameters by \eqref{eq:m16m26-final}, together with
$m_{6,6}=1$ and $m_{i,j}=0$ for all other $(i,j)\in S$.
A direct verification shows that all equations in the handshake system
\eqref{eq:tree-sum-ni}--\eqref{eq:tree-ni-6} are satisfied, and hence these values
correspond to a valid tree $T\in\mathcal{T}_n^{(6)}$.

For this tree we have $P(T)=F(6,6)\,m_{6,6}=10$,
and hence $P_{\min}(n)\le 10$.

On the other hand, suppose that a tree $T'\in\mathcal{T}_n^{(6)}$ satisfies
$P(T')<10$.
Then $P(T') \le 10$, and the same arguments that lead to
\eqref{eq:n345-zero1} and \eqref{eq:m12m22-zero} apply here as well.
Under these constraints, the only admissible edge types are $(1,6)$, $(2,6)$ and
$(6,6)$, and among all coefficients $F(i,j)$ with $(i,j)\in S$ the minimal value is
$F(6,6)=10$.
Therefore any tree with $P(T')>0$ must satisfy $P(T')\ge 10$, while
$P(T')=0$ is impossible since $n\not\equiv 1\pmod 6$ by
\eqref{eq:P-positive}.
Consequently, no tree on $n$ vertices can satisfy $P(T')<10$, and hence $P_{\min}(n)=10$.

\medskip
Combining the necessary and sufficient parts, we conclude that
$P_{\min}(n)=10$ if and only if $n\equiv 0\pmod 6$, and that the minimizing
configuration is unique.
This completes the proof.
\end{proof}

\begin{theorem}
\label{thm:delta6-P20}
Let $n\ge 17$.
Then
\[
P_{\min}(n)=20
\quad\text{if and only if}\quad
n\equiv 5 \pmod 6.
\]
In this case, the minimum is attained uniquely by
\[
m_{6,6}=2,\qquad
m_{1,6}=\frac{2n+8}{3},\qquad
m_{2,6}=\frac{n-17}{3},
\qquad
m_{i,j}=0\ \text{for all }(i,j)\in S\setminus\{(6,6)\},
\]
and the corresponding degree counts satisfy
\[
n_1=\frac{2n+8}{3},\qquad
n_2=\frac{n-17}{6},\qquad
n_6=\frac{n+1}{6},\qquad
n_3=n_4=n_5=0.
\]
\end{theorem}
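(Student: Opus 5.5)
We follow the pattern of the proofs of Theorems~\ref{thm:delta6-P0} and \ref{thm:delta6-P10}. The core tool is the congruence identity already used in the lemmas. Subtract the edge count $\sum m_{i,j}=n-1$ from the degree-$6$ handshake row \eqref{eq:tree-ni-6}. For a tree whose only $S$-edges among the relevant ones are of types $(6,6)$, $(2,2)$, $(1,2)$ and $(3,6)$, and with $n_4=n_5=0$, this gives
\[
6n_6-(n-1)=m_{6,6}-m_{2,2}-m_{1,2}-\bigl(\text{$S$-edges not incident to a degree-$6$ vertex}\bigr).
\]
The argument therefore reduces to a short enumeration of admissible penalty patterns with $P(T)\le 20$, each checked against $n\bmod 6$.

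\textbf{Lower bound and uniqueness.} Let $n\equiv 5\pmod 6$ and let $T$ satisfy $P(T)\le 20$. Lemma~\ref{lem:no5-threshold} gives $n_5=0$, and the first item of Lemma~\ref{lem:no4-threshold} gives $n_4=0$ since $20<30$. The first item of Lemma~\ref{lem:no3-threshold} gives $n_3=0$ if $P(T)\le 10$. If $10<P(T)\le 20$, its second item applies because $n\not\equiv 0,1\pmod 6$, so again $n_3=0$.

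With $n_3=n_4=n_5=0$, the only $S$-edges with $F\le 20$ are of type $(6,6)$, since $F(2,2)=22$ and $F(1,2)=30$ are too large. Hence $m_{6,6}=b\in\{0,1,2\}$, every other $S$-variable vanishes, and $P(T)=10b$. The identity above becomes $b=6n_6-(n-1)\equiv -(n-1)\pmod 6$. For $n\equiv 5$ this forces $b\equiv 2$, so $b=2$ and $P(T)=20$. This shows $P_{\min}(n)\ge 20$ and that every minimizer has exactly $m_{6,6}=2$ with all other $S$-variables zero. Substituting $m_{6,6}=2$ into \eqref{eq:m16-general}--\eqref{eq:m26-general} gives $m_{1,6}=\tfrac{2n+4}{3}+\tfrac43=\tfrac{2n+8}{3}$ and $m_{2,6}=\tfrac{n-7}{3}-\tfrac{10}{3}=\tfrac{n-17}{3}$. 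Then $n_1=m_{1,6}$, $n_2=m_{2,6}/2$, and $n_6=(m_{1,6}+m_{2,6}+4)/6=(n+1)/6$.

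\textbf{Existence.} For $n\equiv 5\pmod 6$ and $n\ge 17$, these values are nonnegative integers, since $n-17\equiv 0\pmod 6$. To realize them, take a path of three degree-$6$ vertices, which supplies the two $(6,6)$ edges. Then attach the remaining degree-$6$ vertices through degree-$2$ connectors in a tree-like fashion, and fill every remaining slot with leaves. The handshake equations confirm the counts, so $P_{\min}(n)\le 20$ and hence $P_{\min}(n)=20$.

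\textbf{Converse.} Suppose $P_{\min}(n)=20$. Then $n\not\equiv 1,0\pmod 6$ by Theorems~\ref{thm:delta6-P0} and \ref{thm:delta6-P10}. Let $T$ attain $P(T)=20$. The same exclusion argument applies, now valid since $n\not\equiv 0,1$, and yields $P(T)=10b$ with $b=2$. The congruence $2\equiv -(n-1)\pmod 6$ then forces $n\equiv 5\pmod 6$.

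\textbf{Main obstacle.} The step needing the most care is the range $10<P\le 20$, where a degree-$3$ vertex might appear, for example $m_{3,6}=3$ costing $12$. The second item of Lemma~\ref{lem:no3-threshold} excludes this, but only under $n\not\equiv 0,1$. In the converse direction, this hypothesis must therefore first be obtained from the two earlier theorems before the lemma can be invoked.
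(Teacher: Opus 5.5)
Your proof is correct and follows essentially the same route as the paper. You exclude degrees $3,4,5$ with the threshold lemmas, invoking the second item of Lemma~\ref{lem:no3-threshold} only once $n\not\equiv 0,1\pmod 6$ is secured; you then reduce to $P(T)=10\,m_{6,6}$ and read off $n \bmod 6$ from the difference between the degree-$6$ handshake row and the edge count. Your direct use of $b\equiv -(n-1)\pmod 6$ to rule out $b\in\{0,1\}$, and your explicit tree construction for existence, are slightly cleaner than the paper's appeal to Theorems~\ref{thm:delta6-P0} and~\ref{thm:delta6-P10} and to the handshake system alone.
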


\begin{proof}
We establish the equivalence by proving necessity and sufficiency.

\medskip
We begin by deriving a necessary arithmetic condition on $n$.
Assume that $P_{\min}(n)=20$, and let $T$ be a tree satisfying $P(T)=20$.

From Theorems~\ref{thm:delta6-P0} and~\ref{thm:delta6-P10}, we know that
$P_{\min}(n)=0$ occurs exactly when $n\equiv 1\pmod 6$, while
$P_{\min}(n)=10$ occurs exactly when $n\equiv 0\pmod 6$.
Hence, under the present assumption, we must have $n\not\equiv 0,1 \pmod 6$.

We next analyze the degree distribution of $T$.
Since $P(T)=20<30$, Lemmas~\ref{lem:no5-threshold} and~\ref{lem:no4-threshold}
exclude the presence of vertices of degree $5$ and $4$.
Moreover, the inequality $10<P(T)\le 22$, together with
$n\not\equiv 0,1\pmod 6$, allows us to invoke
Lemma~\ref{lem:no3-threshold}, which excludes degree~$3$ vertices.
Consequently, $n_3=n_4=n_5=0$.

Under this restriction, the only admissible edge types are
$(1,6)$, $(2,6)$, $(1,2)$, $(2,2)$, and $(6,6)$.
From Table~\ref{tab:F-delta6-row} we have
$F(1,2)=30$ and $F(2,2)=22$, both exceeding the total penalty~$20$.
Therefore neither of these edge types can occur, and we must have $m_{1,2}=m_{2,2}=0$.

As a consequence, the penalty reduces to
$P(T)=10\,m_{6,6}$.

Since $P(T)=20$, this forces $m_{6,6}=2$.

Let $x=m_{1,6}$ and $y=m_{2,6}$.
The total edge count yields
\[
x+y+2=n-1,
\]
while the handshake equation corresponding to degree~$6$ gives
\[
x+y+4=6n_6.
\]
Subtracting these two equations leads to $2=6n_6-(n-1)$,
which is equivalent to the congruence condition
$n\equiv 5 \pmod 6$.

This completes the proof of necessity.

\medskip
We now show that this congruence condition is sufficient.
Assume that $n\equiv 5\pmod 6$, and define
\[
m_{6,6}=2,\qquad
m_{1,6}=\frac{2n+8}{3},\qquad
m_{2,6}=\frac{n-17}{3},
\]
with all other $m_{i,j}$ equal to zero for $(i,j)\in S$.

With $x=m_{1,6}$ and $y=m_{2,6}$, the relation
\[
x+y+2=n-1
\]
verifies the edge count.
Moreover,
\[
n_6=\frac{x+y+4}{6}=\frac{n+1}{6},\qquad
n_2=\frac{y}{2}=\frac{n-17}{6},\qquad
n_1=x=\frac{2n+8}{3},
\]
are all integers, so every equation in the handshake system
\eqref{eq:tree-sum-ni}--\eqref{eq:tree-ni-6} is satisfied.
Thus these parameters define a valid tree $T$.

For this tree we have $P(T)=2\,F(6,6)=20$,
and hence $P_{\min}(n)\le 20$.

On the other hand, suppose that there exists a tree $T'\in\mathcal{T}_n^{(6)}$
with $P(T')<20$.
Then $P(T')\le 20$, and the same arguments used above apply,
implying $n_3=n_4=n_5=m_{1,2}=m_{2,2}=0$.

Under these constraints, the only admissible edge types indexed by $S$ are
$(6,6)$, and the penalty reduces to
\[
P(T')=10\,m_{6,6}.
\]
Since $P(T')>0$ (because $n\not\equiv 0,1\pmod 6$), this implies
$P(T')\ge 10$, with equality only when $m_{6,6}=1$, which would give
$P(T')=10$.
However, by Theorems~\ref{thm:delta6-P0} and~\ref{thm:delta6-P10}, the value
$P_{\min}(n)=10$ occurs if and only if $n\equiv 0\pmod 6$, which contradicts
the present assumption $n\equiv 5\pmod 6$.

Therefore no tree of order $n\equiv 5\pmod 6$ can satisfy $P(T')<20$, and hence
\[
P_{\min}(n)=20.
\]

Finally, uniqueness follows immediately: the condition $P(T)=20$ forces
$m_{6,6}=2$, and all other edge types indexed by $S$ are excluded by the
inequality $F(i,j)>20$.
This completes the proof.
\end{proof}

\begin{theorem}
\label{thm:delta6-P22}
Let $n\ge 14$. Then
\[
P_{\min}(n)=22
\quad\text{if and only if}\quad
n\equiv 2 \pmod 6.
\]
In this case, the minimum is attained uniquely by
\[
m_{2,2}=1,\qquad
m_{1,6}=\frac{2n+2}{3},\qquad
m_{2,6}=\frac{n-8}{3},
\qquad
m_{i,j}=0\ \text{for all }(i,j)\in S\setminus\{(2,2)\},
\]
and the corresponding degree counts satisfy
\[
n_1=\frac{2n+2}{3},\qquad
n_2=\frac{n-2}{6},\qquad
n_6=\frac{n-2}{6},\qquad
n_3=n_4=n_5=0.
\]
\end{theorem}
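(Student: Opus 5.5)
The proof follows the template of Theorems~\ref{thm:delta6-P10} and~\ref{thm:delta6-P20}: a necessity direction, a sufficiency direction with an explicit construction, and a uniqueness check. The core step is to classify every tree with $P(T)\le 22$ and $n\equiv 2\pmod 6$. Take such a tree $T$. Since $P(T)<30$, Lemma~\ref{lem:no5-threshold} gives $n_5=0$, and the first item of Lemma~\ref{lem:no4-threshold} gives $n_4=0$. For degree $3$, Lemma~\ref{lem:no3-threshold} applies: its first item covers $P(T)\le 10$, and its second item covers $10<P(T)\le 22$, which is allowed because $n\not\equiv 0,1\pmod 6$. Hence $n_3=0$. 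The only remaining $S$-types are $(1,2)$, $(2,2)$ and $(6,6)$. Since $F(1,2)=30>22$, we get $m_{1,2}=0$, and so
\[
P(T)=22\,m_{2,2}+10\,m_{6,6}.
\]

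Next comes the congruence bookkeeping. Put $x=m_{1,6}$, $y=m_{2,6}$, $b=m_{6,6}$, $c=m_{2,2}$. Counting edges gives $x+y+b+c=n-1$, and the degree-$6$ handshake gives $x+y+2b=6n_6$. Subtracting, $b-c=6n_6-(n-1)$, so $b-c\equiv -(n-1)\equiv -1\pmod 6$ when $n\equiv 2\pmod 6$. Under $22c+10b\le 22$ the admissible pairs are $(b,c)\in\{(0,0),(1,0),(2,0),(0,1)\}$. Their differences $b-c$ are $0,1,2,-1$, and only $(0,1)$ satisfies the congruence. Therefore $P(T)\ge 22$, with equality exactly when $m_{2,2}=1$ and every other $S$-variable is zero. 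This proves $P_{\min}(n)\ge 22$ and also gives uniqueness.

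With this configuration fixed, I solve for the remaining parameters. From \eqref{eq:m16-general}--\eqref{eq:m26-general} with $m_{2,2}=1$, we get $m_{1,6}=\frac{2n+4}{3}-\frac{2}{3}=\frac{2n+2}{3}$ and $m_{2,6}=\frac{n-7}{3}-\frac13=\frac{n-8}{3}$. Then $n_1=m_{1,6}$, $n_2=(m_{2,6}+2)/2=\frac{n-2}{6}$ and $n_6=(m_{1,6}+m_{2,6})/6=\frac{n-2}{6}$. All of these are nonnegative integers for $n\equiv 2\pmod 6$ and $n\ge 14$; this is exactly where the hypothesis $n\ge 14$ is used, since it guarantees $m_{2,6}\ge 2$.

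For sufficiency I build a tree realizing these counts. Take a path of $n_6$ degree-$6$ vertices in which consecutive vertices are joined through a degree-$2$ vertex; this uses $n_6-1$ degree-$2$ connectors. The one remaining degree-$2$ vertex is used to form the single $(2,2)$ edge: replace one connector by two adjacent degree-$2$ vertices. Then fill all remaining slots at the degree-$6$ vertices with leaves. Checking the counts verifies that this realizes the stated $n_i$ and $m_{i,j}$.

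For the converse ("only if"), I use Theorems~\ref{thm:delta6-P0}, \ref{thm:delta6-P10} and \ref{thm:delta6-P20} in the same way the earlier theorems cite one another. If $n\not\equiv 2\pmod 6$, then either $P_{\min}(n)$ is already $0$, $10$ or $20$ by those theorems, or $n\equiv 3,4\pmod 6$. In the latter case the same reduction applies verbatim for $P\le 22$, and the pair $(0,1)$ gives $b-c=-1$, which forces $n\equiv 2\pmod 6$ and is therefore impossible. So $P_{\min}(n)\ne 22$.

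The main obstacle is making sure that Lemma~\ref{lem:no3-threshold} really covers the whole range $P\le 22$ under the congruence hypothesis. It does, because the lemma's two items split precisely at $10$. Everything else is routine arithmetic.
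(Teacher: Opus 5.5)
Your proposal is correct and follows essentially the paper's route. Lemmas~\ref{lem:no5-threshold}, \ref{lem:no4-threshold} and~\ref{lem:no3-threshold} give $n_3=n_4=n_5=0$, $F(1,2)=30$ forces $m_{1,2}=0$, so $P(T)=22m_{2,2}+10m_{6,6}$, and subtracting the edge count from the degree-$6$ handshake forces $(m_{6,6},m_{2,2})=(0,1)$ and $n\equiv 2\pmod 6$. Your differences are minor improvements: you rule out $P(T)<22$ by checking $b-c\equiv -1\pmod 6$ on all four admissible pairs instead of citing Theorems~\ref{thm:delta6-P0}--\ref{thm:delta6-P20}. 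You also build an explicit tree, whereas the paper only claims that satisfying the handshake system yields a tree; that is not automatic, since for $n=8$ the same formulas satisfy the system but give $n_2=1$ with $m_{2,2}=1$, and your construction correctly shows where $n\ge 14$ is needed.
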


\begin{proof}
We again prove necessity and sufficiency.

\medskip
Assume that $P_{\min}(n)=22$, and let $T$ be a tree with $P(T)=22$.
From Theorems~\ref{thm:delta6-P0}, \ref{thm:delta6-P10}, and
\ref{thm:delta6-P20}, we know that the values $P_{\min}(n)=0,10,20$ occur
precisely when $n\equiv 1,0,5\pmod 6$, respectively.
Hence, under the present assumption,
\[
n\not\equiv 0,1,5 \pmod 6.
\]

We next analyze the degree distribution of $T$.
Since $P(T)=22<30$, Lemmas~\ref{lem:no5-threshold} and~\ref{lem:no4-threshold}
exclude vertices of degrees $5$ and $4$.
Moreover, the inequality $10<P(T)\le 22$, together with
$n\not\equiv 0,1\pmod 6$, allows us to apply
Lemma~\ref{lem:no3-threshold}, which excludes degree~$3$ vertices.
Consequently, $n_3=n_4=n_5=0$.

Under this restriction, only degrees $1$, $2$, and $6$ may occur.
The admissible edge types are therefore $(1,6)$, $(2,6)$, $(1,2)$, $(2,2)$,
and $(6,6)$.
From Table~\ref{tab:F-delta6-row} we have $F(1,2)=30>22$, which forces $m_{1,2}=0$.

Let
\[
x=m_{1,6},\qquad
y=m_{2,6},\qquad
a=m_{2,2},\qquad
b=m_{6,6}.
\]
Since $F(6,6)=10$ and $F(2,2)=22$, the penalty can be written as
\begin{equation}
P(T)=10\,b+22\,a.
\label{eq:P22-decomp}
\end{equation}
According to (\ref{eq:P22-decomp}), the equality $P(T)=22$ has the unique nonnegative integer solution
\begin{equation}
a=1,\qquad b=0.
\label{eq:a1b0}
\end{equation}

We now determine the arithmetic restriction on $n$ imposed by the handshake
constraints.
The edge count gives
\[
x+y+a+b=n-1,
\]
while the handshake equation for degree~$6$ yields
\[
x+y+2b=6n_6.
\]
Subtracting these two equations and using \eqref{eq:a1b0} leads to $-1=6n_6-(n-1)$,
which is equivalent to
$n\equiv 2 \pmod 6$.
This establishes the necessary condition.

\medskip
We now show that this congruence condition is sufficient.
Assume that $n\equiv 2\pmod 6$, and define
\[
m_{2,2}=1,\qquad
m_{1,6}=\frac{2n+2}{3},\qquad
m_{2,6}=\frac{n-8}{3},
\]
with all other $m_{i,j}$ equal to zero for $(i,j)\in S$.

With $x=m_{1,6}$ and $y=m_{2,6}$, the identity
\[
x+y+1=n-1
\]
verifies the edge count.
Moreover,
\[
n_6=\frac{x+y}{6}=\frac{n-2}{6},\qquad
n_2=\frac{2+2y}{2}=\frac{n-2}{6},\qquad
n_1=x=\frac{2n+2}{3},
\]
are all integers, and hence every equation in the handshake system
\eqref{eq:tree-sum-ni}--\eqref{eq:tree-ni-6} is satisfied.
Thus these parameters define a valid tree $T$.

For this tree we have
\[
P(T)=F(2,2)=22,
\]
and hence $P_{\min}(n)\le 22$.

On the other hand, suppose that there exists a tree $T'\in\mathcal{T}_n^{(6)}$
with $P(T')<22$.
Then $P(T')\le 22$, and the same structural restrictions derived above apply.
In particular, Lemmas~\ref{lem:no5-threshold}, \ref{lem:no4-threshold},
and~\ref{lem:no3-threshold} imply $n_3=n_4=n_5=0$,
and from $F(1,2)=30>22$ it follows that
$m_{1,2}=0$.

Under these constraints, the penalty has the form
\[
P(T')=10\,m_{6,6}+22\,m_{2,2}.
\]
If $m_{2,2}\ge 1$, then $P(T')\ge 22$, so $m_{2,2}=0$ must hold.
Thus
\[
P(T')=10\,m_{6,6},
\]
and since $P(T')>0$, we obtain $P(T')\in\{10,20\}$.

If $P(T')=10$, then by Theorem~\ref{thm:delta6-P10} we must have
$n\equiv 0\pmod 6$, contradicting $n\equiv 2\pmod 6$.
If $P(T')=20$, then by Theorem~\ref{thm:delta6-P20} we must have
$n\equiv 5\pmod 6$, again contradicting $n\equiv 2\pmod 6$.
Therefore no tree of order $n\equiv 2\pmod 6$ can satisfy $P(T')<22$, and hence
\[
P_{\min}(n)=22.
\]

Finally, uniqueness follows immediately from
\eqref{eq:P22-decomp}–\eqref{eq:a1b0}, which force $(m_{2,2},m_{6,6})=(1,0)$.
This completes the proof.
\end{proof}

\begin{theorem}
\label{thm:delta6-P30}
Let $n\ge 22$.
Then
\[
P_{\min}(n)=30
\quad\text{if and only if}\quad
n\equiv 4 \pmod 6.
\]
In this case, the minimum is attained uniquely by
\[
m_{6,6}=3,\qquad
m_{1,6}=\frac{2n+10}{3},\qquad
m_{2,6}=\frac{n-22}{3},
\qquad
m_{i,j}=0\ \text{for all }(i,j)\in S\setminus\{(6,6)\},
\]
and the corresponding degree counts satisfy
\[
n_1=\frac{2n+10}{3},\qquad
n_2=\frac{n-22}{6},\qquad
n_6=\frac{n+2}{6},\qquad
n_3=n_4=n_5=0.
\]
\end{theorem}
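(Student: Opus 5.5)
The plan follows the template of Theorems~\ref{thm:delta6-P20} and~\ref{thm:delta6-P22}. We prove necessity and sufficiency separately. In both directions the hardest part is the lower bound: excluding every configuration with $P(T)<30$, and every configuration with $P(T)=30$ other than $m_{6,6}=3$, when $n\equiv 4\pmod 6$.

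\emph{Lower bound and uniqueness for $n\equiv 4\pmod 6$.} Let $T$ be a tree with $P(T)\le 30$.
\begin{itemize}
\item Lemma~\ref{lem:no5-threshold} (first item) gives $n_5=0$.
\item Lemma~\ref{lem:no4-threshold} (first two items, since $n\not\equiv 1$) gives $n_4=0$.
\item Degree $3$ needs more care, because $22<P\le 30$ is not covered by Lemma~\ref{lem:no3-threshold}. If $P(T)\le 10$, use its first item. If $10<P(T)\le 22$, use its second item, since $n\not\equiv 0,1$. If $22<P(T)\le 30$, use Lemma~\ref{lem:no3-22-30}, whose hypothesis $n\not\equiv 0,1,2$ holds for $n\equiv 4$.
\end{itemize}
Hence $n_3=n_4=n_5=0$. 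The only $S$-edges left are then $(1,2)$, $(2,2)$ and $(6,6)$, and
\[
P(T)=30\,m_{1,2}+22\,m_{2,2}+10\,m_{6,6}.
\]

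Write $a=m_{2,2}$, $b=m_{6,6}$, $c=m_{1,2}$, $x=m_{1,6}$, $y=m_{2,6}$.
\begin{itemize}
\item The edge count gives $x+y+a+b+c=n-1$.
\item The degree-$6$ handshake gives $x+y+2b=6n_6$.
\end{itemize}
Subtracting, $b-a-c=6n_6-(n-1)$, so $b-a-c\equiv 3\pmod 6$ when $n\equiv 4$.

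Now list the nonnegative solutions of $30c+22a+10b\le 30$: $(c,a,b)=(1,0,0)$, $(0,1,0)$, and $(0,0,b)$ with $b\le 3$. The corresponding values of $b-a-c$ are $-1$, $-1$ and $b\in\{0,1,2,3\}$. Only $b=3$ gives residue $3$, which yields $P(T)=30$. Therefore $P(T)\ge 30$, with equality only for $m_{6,6}=3$ and all other $S$-variables zero.

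Solving the two linear equations then gives
\[
x+y=n-4,\qquad n_6=\frac{n+2}{6}.
\]
The values of $m_{1,6}$ and $m_{2,6}$ follow from \eqref{eq:m16-general}--\eqref{eq:m26-general}, or equivalently from the degree-$1$ and degree-$2$ handshakes, which give $x=\tfrac{2n+10}{3}$ and $y=\tfrac{n-22}{3}$. This yields the stated $n_1$ and $n_2=y/2$.

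\emph{Sufficiency.} For $n\equiv 4\pmod 6$ and $n\ge 22$, all these quantities are nonnegative integers. We exhibit a tree realizing them: take a path of $4$ degree-$6$ vertices, which supplies the three $(6,6)$ edges. Attach the remaining degree-$6$ vertices through degree-$2$ connectors, and fill the remaining incidences with leaves. Then check the handshake system, as in the earlier theorems. This gives $P_{\min}(n)\le 30$, and combined with the lower bound, $P_{\min}(n)=30$.

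\emph{Necessity.} If $P_{\min}(n)=30$, then Theorems~\ref{thm:delta6-P0}--\ref{thm:delta6-P22} exclude $n\equiv 0,1,2,5$. It remains to exclude $n\equiv 3$. For this I would run the same reduction at $P=30$ with $n\equiv 3$. Lemma~\ref{lem:no3-22-30} again applies, and the congruence $b-a-c\equiv 4\pmod 6$ has no solution among the listed triples, so $P=30$ is impossible.

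The main obstacle I expect is the degree-$3$ exclusion across the whole range $P\le 30$. One must make sure the three lemmas jointly cover it under the congruence hypotheses, including the case $n\equiv 3$ in the necessity direction.
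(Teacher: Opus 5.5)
Your proposal is correct and follows essentially the paper's route. You eliminate degrees $5$, $4$ and $3$ via Lemmas~\ref{lem:no5-threshold}, \ref{lem:no4-threshold}, \ref{lem:no3-threshold} and~\ref{lem:no3-22-30}, reduce to $P(T)=30m_{1,2}+22m_{2,2}+10m_{6,6}$, and then filter the candidates by $m_{6,6}-m_{2,2}-m_{1,2}\equiv -(n-1)\pmod 6$, which comes from the edge count and the degree-$6$ handshake. Your version is slightly more careful than the paper's in two places: you run the reduction over the whole range $P(T)\le 30$, choosing the right degree-$3$ lemma for each subrange, whereas the paper's lower bound for $P<30$ leans tersely on the earlier theorems; and you exhibit an explicit tree instead of only checking the handshake system.
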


\begin{proof}
We again derive the necessary and sufficient conditions.

\medskip
Assume that $P_{\min}(n)=30$, and let $T$ be a tree satisfying $P(T)=30$.
From Theorems~\ref{thm:delta6-P0}, \ref{thm:delta6-P10},
\ref{thm:delta6-P20}, and \ref{thm:delta6-P22}, the smaller penalty values
$0$, $10$, $20$, and $22$ are already completely characterized by the
congruence classes
$n\equiv 1,0,5,$ and $2\pmod 6$, respectively.
Hence, under the present assumption,
\[
n\not\equiv 0,1,2,5 \pmod 6.
\]

We now examine the degree distribution of $T$.
Since $P(T)=30$, Lemma~\ref{lem:no5-threshold} excludes vertices of degree $5$.
Moreover, because $n\not\equiv 1\pmod 6$, Lemma~\ref{lem:no4-threshold}
(second case) excludes vertices of degree $4$.
Finally, as $P(T)>22$ and $n\not\equiv 0,1,2\pmod 6$, Lemma~\ref{lem:no3-22-30}
implies that vertices of degree $3$ cannot occur.
Consequently, $n_3=n_4=n_5=0$.

Under this restriction, only degrees $1$, $2$, and $6$ are present.
Thus the admissible edge types are
$(1,6)$, $(2,6)$, $(1,2)$, $(2,2)$, and $(6,6)$.
From Table~\ref{tab:F-delta6-row} we have
$F(1,2)=30$, $F(2,2)=22$, and $F(6,6)=10$.
Therefore the penalty can be written as
\begin{equation}
P(T)=30\,m_{1,2}+22\,m_{2,2}+10\,m_{6,6}.
\label{eq:P30-decomp}
\end{equation}

Solving \eqref{eq:P30-decomp} in nonnegative integers under the constraint
$P(T)=30$ yields exactly two candidates:
\[
(m_{1,2},m_{2,2},m_{6,6})=(1,0,0)
\quad\text{or}\quad
(0,0,3).
\]

We show that the first option is impossible.
Assume $m_{1,2}=1$, and let $x=m_{1,6}$ and $y=m_{2,6}$.
The edge count gives
\[
x+y+1=n-1,
\]
while the degree--$6$ handshake equation yields
\[
x+y=6n_6.
\]
Subtracting these relations leads to $-1=6n_6-(n-1)$,
which implies $n\equiv 2\pmod 6$, contradicting
$n\not\equiv 0,1,2,5\pmod 6$.
Hence this case cannot occur.

Therefore,
\begin{equation}
m_{6,6}=3,
\qquad
m_{1,2}=m_{2,2}=0.
\label{eq:m66=3}
\end{equation}

As before, let $x = m_{1,6}$ and $y = m_{2,6}$.
The edge count now gives
\[
x+y+3=n-1,
\]
and the degree--$6$ handshake equation becomes 
\[
x+y+6=6n_6.
\]
Subtracting these equations yields $3=6n_6-(n-1)$,
which is equivalent to
$n\equiv 4 \pmod 6$.
This establishes the necessary condition.

\medskip
We now show that this congruence condition is sufficient.
Assume that $n\equiv 4\pmod 6$, and define
\[
m_{6,6}=3,\qquad
m_{1,6}=\frac{2n+10}{3},\qquad
m_{2,6}=\frac{n-22}{3},
\]
with all other $m_{i,j}$ equal to zero for $(i,j)\in S$.

With these choices, the edge count and the degree--$6$ handshake equation are
satisfied, and
\[
n_1=\frac{2n+10}{3},\qquad
n_2=\frac{n-22}{6},\qquad
n_6=\frac{n+2}{6}
\]
are nonnegative integers.
Thus all equations in the handshake system
\eqref{eq:tree-sum-ni}--\eqref{eq:tree-ni-6} hold, and the parameters define a
valid tree $T$.

For this tree,
\[
P(T)=3\,F(6,6)=30,
\]
so $P_{\min}(n)\le 30$.
On the other hand, by \eqref{eq:P30-decomp} the only smaller values that
$P(T)$ can attain are $0$, $10$, $20$, and $22$, which correspond precisely to
the excluded congruence classes.
Hence no tree of order $n\equiv 4\pmod 6$ can satisfy $P(T)<30$, and therefore
$P_{\min}(n)=30$.

Finally, uniqueness follows from \eqref{eq:P30-decomp} together with
\eqref{eq:m66=3}, which force $m_{6,6}=3$ and exclude all other possibilities.
This completes the proof.
\end{proof}

\begin{theorem}
\label{thm:delta6-P40}
Let $n$ be a positive integer. Then
\[
P_{\min}(n)=40
\quad\text{if and only if}\quad
n\equiv 3 \pmod 6.
\]
Moreover, if $n\equiv 3\pmod 6$, then the minimum $P_{\min}(n)=40$ is attained
exactly by the following two families of solutions (written in terms of the
nonzero $m_{i,j}$, with all other $(i,j)\in S$ equal to $0$):
\begin{itemize}
\item[\textup{(i)}] (\emph{the case $n_3=0$}, valid for $n\ge 27$)
\[
m_{6,6}=4,\qquad
m_{1,6}=\frac{2n+12}{3},\qquad
m_{2,6}=\frac{n-27}{3};
\]
\item[\textup{(ii)}] (valid for $n\ge 15$)
\[
m_{2,3}=2,\qquad
m_{3,6}=1,\qquad
m_{1,6}=\frac{2n+3}{3},\qquad
m_{2,6}=\frac{n-15}{3}.
\]
\end{itemize}
\end{theorem}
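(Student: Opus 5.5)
I will follow the template of Theorems~\ref{thm:delta6-P0}--\ref{thm:delta6-P30}: a necessity part that starts from a tree $T$ with $P(T)=40$, a sufficiency part that builds the two configurations, and a lower-bound part showing $P(T)<40$ is impossible when $n\equiv 3\pmod 6$.

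\textbf{Necessity.} Assume $P_{\min}(n)=40$ and take $T$ with $P(T)=40$. By Theorems~\ref{thm:delta6-P0}--\ref{thm:delta6-P30}, the values $0,10,20,22,30$ are exactly the minima for $n\equiv 1,0,5,2,4\pmod 6$. Hence $n\equiv 3\pmod 6$, since these five classes already have smaller minima.

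\textbf{Degree exclusion and enumeration.} With $n\equiv 3$, Lemma~\ref{lem:no5-threshold} (second item) gives $n_5=0$. Lemma~\ref{lem:no4-threshold} (third item) gives $n_4=0$. Lemma~\ref{lem:no3-30-40-final-v5} gives either $n_3=0$ or the pattern (E1), and (E1) is exactly family (ii).

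If $n_3=0$, the penalty is $P(T)=30m_{1,2}+22m_{2,2}+10m_{6,6}$. Its solutions of value $40$ are $(m_{1,2},m_{2,2},m_{6,6})=(1,0,1)$ and $(0,0,4)$; note $22$ cannot combine to $40$ with the other coefficients. Write $x=m_{1,6}$, $y=m_{2,6}$, and subtract the edge count from the degree-$6$ handshake to get
\[
2m_{6,6}-m_{6,6}-m_{1,2}-m_{2,2}=6n_6-(n-1).
\]
For $(1,0,1)$ the left side is $0$, forcing $n\equiv 1$; this is a contradiction, so that option is excluded. For $(0,0,4)$ the left side is $4$, so $n\equiv 3$, and solving the linear system gives $m_{1,6}=(2n+12)/3$ and $m_{2,6}=(n-27)/3$. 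For (ii), the analogous computation uses $n_2$ absorbing the two $(2,3)$ edges and the degree-$3$ vertex having edges $2,2,6$. It gives $m_{1,6}=(2n+3)/3$ and $m_{2,6}=(n-15)/3$, and these can be checked directly against \eqref{eq:tree-sum-ni}--\eqref{eq:tree-ni-6}.

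\textbf{Sufficiency.} For $n\equiv 3\pmod 6$, I will exhibit explicit trees rather than just verify the counts.

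For family (i), take a path of $5$ degree-$6$ vertices, giving four $(6,6)$ edges. Attach the remaining incidences to leaves or to degree-$2$ vertices that each join to a new degree-$6$ vertex. Each such $2$-vertex is a connector, so the multiplicities match.

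For family (ii), take a degree-$3$ vertex adjacent to one degree-$6$ vertex and to two degree-$2$ vertices. Each of those degree-$2$ vertices continues to a degree-$6$ vertex, which gives $m_{2,3}=2$ and $m_{2,6}\ge 2$, and the rest is completed as before. Nonnegativity of $m_{2,6}$ requires $n\ge 27$ for (i) and $n\ge 15$ for (ii). For $n<15$ the statement needs a separate small check or should be read under these thresholds; the theorem as phrased for every positive $n$ is delicate there, and I will restrict to the ranges given.

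\textbf{Lower bound.} Here I expect the main obstacle. I must show that no $T$ with $n\equiv 3$ has $P(T)<40$.

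\begin{itemize}
\item If $P(T)\le 22$, then $n_3=n_4=n_5=0$ by the lemmas, since $n\not\equiv 0,1$. The congruence computations in the earlier theorems then exclude the values $0,10,20,22$.
\item If $22<P(T)\le 30$, Lemma~\ref{lem:no3-22-30}, Lemma~\ref{lem:no4-threshold} and Lemma~\ref{lem:no5-threshold} apply since $n\not\equiv 0,1,2$. The only values are $30$, from $(1,0,0)$ or $(0,0,3)$, and these force $n\equiv 2$ or $4$.
\item If $30<P(T)<40$, Lemma~\ref{lem:no3-30-40-final-v5} forces $n_3=0$, because (E1) needs $P=40$. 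With $n_3=n_4=n_5=0$, the achievable values in $(30,40)$ are $32=22+10$ and $30+\dots$. The case $32$ gives a left side $1-1=0$, hence $n\equiv 1$, which is a contradiction.
\end{itemize}

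The remaining care is to list all integer combinations of $10,22,30$ in each window and verify that none is congruence-compatible with $n\equiv 3$.
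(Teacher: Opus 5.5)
Your necessity argument and lower bound follow the paper's proof essentially step for step. You use the same exclusion lemmas, the same enumeration of $30m_{1,2}+22m_{2,2}+10m_{6,6}$, and the same congruence $m_{6,6}-m_{1,2}-m_{2,2}=6n_6-(n-1)$; invoking the lemmas window by window instead of quoting the earlier theorems is a harmless variation. That part is sound for $n\equiv 3\pmod 6$. The genuine gap is in the sufficiency step. The threshold you adopt for family (ii) is wrong, and with it the statement itself fails for small $n$.

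\textbf{The threshold for (ii).} You take (ii) to be realizable for $n\ge 15$ because $m_{2,6}=(n-15)/3\ge 0$ there. But a nonnegative integer solution of \eqref{eq:tree-sum-ni}--\eqref{eq:tree-ni-6} need not come from a tree, and your own construction shows this.
\begin{itemize}
\item The unique degree-$3$ vertex $s$ has two distinct degree-$2$ neighbours $w_1,w_2$.
\item Since $m_{1,2}=m_{2,2}=0$ and $s$ is the only degree-$3$ vertex, each $w_i$ has a degree-$6$ second neighbour.
\item These two vertices are distinct from each other and from the degree-$6$ neighbour of $s$, otherwise there is a cycle.
\end{itemize}
Hence $m_{2,6}\ge 2$ and $n_6=(n-3)/6\ge 3$, i.e.\ $n\ge 21$. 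At $n=15$ the vector $m_{2,3}=2$, $m_{3,6}=1$, $m_{1,6}=11$, $m_{2,6}=0$ (so $n_2=1$, $n_6=2$) satisfies every handshake equation. Realizing it would require the single degree-$2$ vertex to be joined to $s$ twice.

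\textbf{Consequence for the statement.} Your enumeration shows that for $n\equiv 3\pmod 6$ every tree with $P(T)\le 40$ has the parameters of (i) or (ii), and (i) needs $n\ge 27$. So you actually get $P_{\min}(15)>40$. For example, two degree-$6$ vertices carrying five leaves each, joined by a path through three degree-$2$ vertices, give $m_{2,2}=2$ and $P=44$. The same argument gives $P_{\min}(9)>40$, and $\mathcal{T}_3^{(6)}=\emptyset$. Thus the ``if'' direction is false for $n\in\{3,9,15\}$, and no small-case check can rescue it. What can be proved is the statement for $n\equiv 3\pmod 6$ with $n\ge 21$, where (ii) is valid for $n\ge 21$ and (i) for $n\ge 27$.

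The paper's proof has the identical defect. It infers $P_{\min}(n)\le 40$ merely from (ii) satisfying the handshake equations. Its own Lemma~\ref{lem:TT3iiopt-delta6} needs odd indices $3\le i<j\le 2k-1$, which tacitly forces $k\ge 3$, i.e.\ $n\ge 21$. Your instinct to exhibit explicit trees was the right one; you only needed to read off the threshold your construction imposes.
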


\begin{proof}
Assume that $P_{\min}(n)=40$.
By Theorems~\ref{thm:delta6-P0}, \ref{thm:delta6-P10},
\ref{thm:delta6-P20}, \ref{thm:delta6-P22}, and \ref{thm:delta6-P30},
the smaller minimum penalties $0$, $10$, $20$, $22$, and $30$ occur precisely for
$n\equiv 1,0,5,2,$ and $4\pmod 6$, respectively.
Hence $n\not\equiv 0,1,2,4,5\pmod 6$, and therefore $n\equiv 3\pmod 6$.

\medskip
Assume from now on that $n\equiv 3\pmod 6$.
Since the value $30$ occurs only when $n\equiv 4\pmod 6$, we have
\begin{equation}
P_{\min}(n)>30.
\label{eq:Pgt30}
\end{equation}
Thus it suffices to show that there exists a tree $T$ with $P(T)=40$ and that
no tree satisfies $30<P(T)<40$.

\medskip
Let $T$ be any tree with $30<P(T)\le 40$ and $n\equiv 3\pmod 6$.
Since $n\not\equiv 0\pmod 6$ and $P(T)\le 40$,
Lemma~\ref{lem:no5-threshold} (second item) implies $n_5=0$.
Moreover, since $n\not\equiv 0,1,2\pmod 6$ and $30<P(T)\le 40$,
Lemma~\ref{lem:no4-threshold} (third item) yields $n_4=0$.
Hence
\begin{equation}
n_4=n_5=0.
\label{eq:n4n5zero-40}
\end{equation}

Under \eqref{eq:n4n5zero-40}, we apply Lemma~\ref{lem:no3-30-40-final-v5}.
Since $n\equiv 3\pmod 6$ and $30<P(T)\le 40$, either $n_3=0$, or $T$ satisfies the
exceptional pattern \textup{(E1)} from Lemma~\ref{lem:no3-30-40-final-v5}.
In particular, if $n_3>0$, then necessarily $P(T)=40$ and $T$ is of type
\textup{(ii)} (the pattern \textup{(E1)}).
Therefore there is no tree with $n_3>0$ and $30<P(T)<40$, and it remains only to
exclude $30<P(T)<40$ in the case
\begin{equation}
n_3=0.
\label{eq:n3zero-40}
\end{equation}

Assume \eqref{eq:n3zero-40}.
Together with \eqref{eq:n4n5zero-40}, only degrees $1$, $2$, and $6$ may occur,
so the admissible edge types are $(1,6)$, $(2,6)$, $(1,2)$, $(2,2)$, and $(6,6)$.
Let
\[
x=m_{1,6},\qquad y=m_{2,6},\qquad a=m_{1,2},\qquad c=m_{2,2},\qquad b=m_{6,6}.
\]
From Table~\ref{tab:F-delta6-row} we have $F(1,2)=30$, $F(2,2)=22$, and $F(6,6)=10$,
and hence
\begin{equation}
P(T)=30a+22c+10b.
\label{eq:P-126}
\end{equation}
Counting edges gives
\begin{equation}
x+y+a+c+b=n-1,
\label{eq:Ecount-40}
\end{equation}
while the degree--$6$ handshake equation yields
\begin{equation}
x+y+2b=6n_6.
\label{eq:H6-40}
\end{equation}
Subtracting \eqref{eq:Ecount-40} from \eqref{eq:H6-40} gives
\begin{equation}
b-a-c=6n_6-(n-1).
\label{eq:mod-core-40}
\end{equation}
Reducing \eqref{eq:mod-core-40} modulo $6$ and using $n\equiv 3\pmod 6$
(hence $n-1\equiv 2\pmod 6$) yields
\begin{equation}
b-a-c\equiv 4 \pmod 6.
\label{eq:bac-cong-40}
\end{equation}

If $P(T)<40$, then by \eqref{eq:P-126} the only possible values are
\[
P(T)\in\{0,10,20,22,30,32\}.
\]
When $P(T)\le 30$, the values $0$, $10$, $20$, $22$, and $30$ occur only for
$n\equiv 1,0,5,2,4\pmod 6$, respectively, impossible since $n\equiv 3\pmod 6$.
For $P(T)=32$, equation \eqref{eq:P-126} forces $(a,c,b)=(0,1,1)$, and then
$b-a-c=0$ contradicts \eqref{eq:bac-cong-40}.
Therefore $P(T)\ge 40$ whenever \eqref{eq:n3zero-40} holds.

Moreover, combining \eqref{eq:bac-cong-40} with $P(T)=40$ forces $(a,c,b)=(0,0,4)$,
and hence, in the case $n_3=0$ we must have
\begin{equation}
m_{6,6}=4,\qquad m_{1,2}=m_{2,2}=0.
\label{eq:C0-core}
\end{equation}

\medskip
We now show that $P(T)=40$ is attainable.
For $n\equiv 3\pmod 6$, the exceptional pattern \textup{(ii)} satisfies the handshake
equations and yields $P(T)=40$, hence $P_{\min}(n)\le 40$.
In addition, in case \textup{(i)} (where $n_3=0$), the constraints
\eqref{eq:C0-core} and \eqref{eq:Ecount-40} uniquely yield
\[
m_{1,6}=\frac{2n+12}{3},\qquad m_{2,6}=\frac{n-27}{3},
\]
so \textup{(i)} also attains $P(T)=40$ whenever these values are nonnegative integers.

\medskip
Finally, \eqref{eq:Pgt30} excludes $P_{\min}(n)\le 30$, and we have shown that
no tree satisfies $30<P(T)<40$ while the families \textup{(i)} and \textup{(ii)}
attain $P(T)=40$. Hence $P_{\min}(n)=40$ for all $n\equiv 3\pmod 6$, and the
minimum is attained exactly by \textup{(i)} and \textup{(ii)}.
\end{proof}

\subsection{Explicit constructions of minimizers}
\label{subsec:construct}

\medskip

Each of the following theorems provides a complete parameter description of the
minimizing configurations in terms of the variables $m_{i,j}$ and $n_i$. To
complement these algebraic characterizations, we next give explicit constructive
descriptions of all minimizing trees. Each construction lemma defines a natural family of trees and shows
that a tree $T\in\mathcal{T}_n^{(6)}$ attains the stated minimum penalty if and only if
$T$ belongs to that family (up to isomorphism).

% (Place your construction lemmas after the corresponding theorem, as in your current skeleton.)
%========================================================

\medskip

Let $k\ge 1$.
Let $TT^1_{\mathrm{opt}}(k)$ denote the tree obtained from the path
$P_{2k+1}=v_1v_2\cdots v_{2k+1}$ by attaching exactly four pendant vertices to
each even vertex $v_{2},v_{4},\dots,v_{2k}$.

\begin{lemma}
\label{lem:TT1opt-delta6}
Let $k\ge 1$ and set $n=6k+1$.
Then $TT^1_{\mathrm{opt}}(k)$ has maximum degree $6$ and satisfies $P(TT^1_{\mathrm{opt}}(k))=0$.
Moreover, a tree $T\in\mathcal{T}_{n}^{(6)}$ satisfies $P(T)=0$ if and only if
$T\cong TT^1_{\mathrm{opt}}(k)$.
\end{lemma}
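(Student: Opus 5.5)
The plan is to prove the two directions separately, using Theorem~\ref{thm:delta6-P0} for the algebraic part and a structural reduction for the converse. Carrying out that reduction shows that the converse, as stated, fails once $k\ge 4$; the last paragraph gives the counterexample and the corrected characterization.

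\emph{Direct part.} We count degrees in $TT^1_{\mathrm{opt}}(k)$. Each even vertex $v_{2i}$ has two path neighbours and four pendant vertices, so it has degree $6$. The interior odd vertices $v_3,\dots,v_{2k-1}$ have degree $2$, and the endpoints $v_1,v_{2k+1}$ together with the $4k$ pendants are leaves. Hence $n=2k+1+4k=6k+1$ and $\Delta=6$. Every edge joins a degree-$6$ vertex to a vertex of degree $1$ or $2$, so all $m_{i,j}$ with $(i,j)\in S$ vanish and $P=0$. As a sanity check, $m_{1,6}=4k+2=\frac{2n+4}{3}$ and $m_{2,6}=2(k-1)=\frac{n-7}{3}$, matching Theorem~\ref{thm:delta6-P0}.

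\emph{Converse.} Let $T\in\mathcal{T}_n^{(6)}$ with $P(T)=0$. By the positivity \eqref{eq:F-positive-proof} used in Theorem~\ref{thm:delta6-P0}, every $m_{i,j}$ with $(i,j)\in S$ is $0$, so every edge is of type $(1,6)$ or $(2,6)$. Consequently:
\begin{itemize}
\item no two degree-$6$ vertices are adjacent;
\item every degree-$2$ vertex has two degree-$6$ neighbours;
\item every leaf hangs on a degree-$6$ vertex.
\end{itemize}
Now delete all leaves and suppress every degree-$2$ vertex into an edge. The result is a tree $T'$ on the $n_6=k$ vertices of degree $6$, with $n_2=k-1$ edges and maximum degree at most $6$. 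Conversely, $T$ is recovered from $T'$ by subdividing each edge once and then attaching $6-d_{T'}(v)$ pendant vertices to each $v$. The natural next step is to show that $T'$ is a path, since this is exactly what yields $T\cong TT^1_{\mathrm{opt}}(k)$.

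\emph{Main obstacle.} That step cannot be carried out. The handshake data fix only $n_1,n_2,n_6$, and they do not force $T'$ to be a path. For $k=4$, take $T'=K_{1,3}$. Its centre gets three degree-$2$ neighbours and three pendants, and each of the other degree-$6$ vertices gets one degree-$2$ neighbour and five pendants. This is a tree on $25=6\cdot 4+1$ vertices with $P=0$ that is not isomorphic to $TT^1_{\mathrm{opt}}(4)$. So the argument proves the ``only if'' direction only for $k\le 3$, where every tree on $k$ vertices is a path. For general $k$ it proves the corrected statement: $P(T)=0$ if and only if $T$ is obtained from some tree $T'$ on $k$ vertices with $\Delta(T')\le 6$ by subdividing every edge once and padding each original vertex with pendant vertices up to degree $6$. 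The family $TT^1_{\mathrm{opt}}(k)$ is the case $T'=P_k$.
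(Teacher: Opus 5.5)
Your proposal is correct, and so is your conclusion: the ``only if'' part of the lemma is false for every $k\ge 4$. Your $K_{1,3}$-skeleton tree has $n_6=4$, $n_2=3$ and $n_1=18$, so $n=25$. It has $m_{1,6}=18=\frac{2n+4}{3}$, $m_{2,6}=6=\frac{n-7}{3}$ and no edge of a type in $S$, so $P=0$; it even has the same $\sigma=546$ as $TT^1_{\mathrm{opt}}(4)$. It is not isomorphic to $TT^1_{\mathrm{opt}}(4)$, because its centre has three degree-$2$ neighbours, while every degree-$6$ vertex of $TT^1_{\mathrm{opt}}(4)$ has at most two. For any $k\ge 4$, a non-path skeleton such as a three-legged spider on $k$ vertices gives a counterexample in the same way.

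The paper's proof proceeds differently, by induction on $k$. It takes an end leaf $u$ of a longest path and its neighbour $v$, which has five leaf neighbours and one degree-$2$ neighbour $w$. It deletes $v$ together with its leaves and applies the induction hypothesis to the smaller tree. It then asserts that reattaching ``corresponds exactly to extending the defining path.'' That assertion is where the proof breaks.

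In the reduced tree, $w$ is some leaf of $TT^1_{\mathrm{opt}}(k)$, and nothing forces it to be a leaf at $v_2$ or $v_{2k}$. If $w$ is a pendant of an interior spine vertex $v_{2i}$ with $2\le i\le k-1$, which is possible once $k\ge 3$, the reattached tree has a degree-$6$ vertex with three degree-$2$ neighbours. Your example is exactly this case: deleting one arm leaves $TT^1_{\mathrm{opt}}(3)$ with $w$ hanging at $v_4$. So the induction is valid only for $k\le 3$. What the paper actually establishes, there and in Theorem~\ref{thm:delta6-P0}, is uniqueness of the parameter vector $(m_{i,j})$, not uniqueness up to isomorphism.

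Your global reduction (strip the leaves, suppress the degree-$2$ vertices) avoids the bookkeeping that defeats the local induction. It yields the correct characterization: isomorphism classes of minimizers correspond bijectively to trees on $k$ vertices with maximum degree at most $6$, and $TT^1_{\mathrm{opt}}(k)$ corresponds to $P_k$. The later construction lemmas for $P=10,20,22,30,40$ all reduce to this lemma, so their ``only if'' directions inherit the same defect.
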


\begin{proof}
Fix $k\ge 1$ and let $n=6k+1$.
By Theorem~\ref{thm:delta6-P0}, the equality $P(T)=0$ is possible if and only if
$n\equiv 1\pmod 6$, and in this case the minimizing configuration is unique.
Moreover, if $T\in\mathcal{T}_n^{(6)}$ satisfies $P(T)=0$, then
\begin{equation}
m_{1,6}=4k+2,\qquad
m_{2,6}=2k-2,\qquad
m_{i,j}=0\ \text{for all }(i,j)\in S,
\label{eq:TT-counts-m}
\end{equation}
and the corresponding degree counts satisfy
\begin{equation}
n_1=4k+2,\qquad
n_2=k-1,\qquad
n_6=k,\qquad
n_3=n_4=n_5=0.
\label{eq:TT-counts-n}
\end{equation}

\medskip
We first note that $TT^1_{\mathrm{opt}}(k)$ satisfies
\eqref{eq:TT-counts-m}--\eqref{eq:TT-counts-n}.
Indeed, every even vertex on the defining path has degree $6$ (two path neighbors
and four pendant neighbors), every internal odd vertex has degree $2$, and the
two endpoints have degree $1$.
Hence $n_6=k$, $n_2=k-1$, $n_1=4k+2$, and all edges are of type $(1,6)$ or $(2,6)$,
with exactly $m_{1,6}=4k+2$ and $m_{2,6}=2k-2$.
By Theorem~\ref{thm:delta6-P0}, this implies $P(TT^1_{\mathrm{opt}}(k))=0$.

\medskip
It remains to show uniqueness.
Let $T\in\mathcal{T}_n^{(6)}$ satisfy $P(T)=0$, so that
\eqref{eq:TT-counts-m}--\eqref{eq:TT-counts-n} hold.
We prove by induction on $k$ that $T\cong TT^1_{\mathrm{opt}}(k)$.

For $k=1$, we have $n=7$ and $(n_1,n_2,n_6)=(6,0,1)$, hence $T$ is the star $S_7$,
which coincides with $TT^1_{\mathrm{opt}}(1)$.

Assume the statement holds for $k$ and consider a tree $T$ of order $6(k+1)+1$
satisfying $P(T)=0$.
Then every edge is of type $(1,6)$ or $(2,6)$, and every degree--$6$ vertex is
adjacent only to vertices of degree $1$ or $2$.
Let $u$ be an endpoint of a longest path in $T$ and let $v$ be its neighbor.
Then $u$ is a leaf and $\deg(v)=6$.
A standard longest–path argument shows that among the remaining neighbors of $v$,
exactly four are leaves and exactly one has degree $2$.

Removing $v$ together with its four pendant neighbors produces a tree $T'$ in
which the unique remaining neighbor of $v$ becomes a leaf.
The resulting tree has parameters
\[
n'_6=k,\qquad n'_2=k-1,\qquad n'_1=4k+2,
\]
and all other degrees equal to zero, hence again satisfies
\eqref{eq:TT-counts-m}--\eqref{eq:TT-counts-n} with parameter $k$.
By the induction hypothesis, $T'\cong TT^1_{\mathrm{opt}}(k)$.

Reattaching the removed configuration corresponds exactly to extending the
defining path of $TT^1_{\mathrm{opt}}(k)$ by one additional even vertex with four
pendant neighbors.
Thus $T\cong TT^1_{\mathrm{opt}}(k+1)$.

\medskip
Therefore, every tree $T\in\mathcal{T}_{6k+1}^{(6)}$ with $P(T)=0$ is isomorphic to
$TT^1_{\mathrm{opt}}(k)$.
Since $TT^1_{\mathrm{opt}}(k)$ itself satisfies $P(T)=0$, the proof is complete.
\end{proof}

\begin{lemma}
\label{lem:TT6opt-delta6}
Let $k\ge 1$ and set $n=6k+6$.
Let $TT^6_{\mathrm{opt}}(k)$ denote the family of trees obtained from
$TT^1_{\mathrm{opt}}(k)$ by subdividing exactly one edge $v_iv_{i+1}$ with odd
$i$ and $3\le i\le 2k-1$, and then attaching four pendant vertices to the new
subdivision vertex.

Then a tree $T\in\mathcal{T}^{(6)}_n$ satisfies $P(T)=10$
if and only if $T\in TT^6_{\mathrm{opt}}(k)$.
In particular, the unique minimizing configuration from
Theorem~\ref{thm:delta6-P10} belongs to $TT^6_{\mathrm{opt}}(k)$.
\end{lemma}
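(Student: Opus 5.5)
By Theorem~\ref{thm:delta6-P10}, $P(T)=10$ with $n=6k+6$ forces $m_{6,6}=1$, $m_{1,6}=4k+6$, $m_{2,6}=2k-2$, $m_{i,j}=0$ for all other $(i,j)\in S$, together with $n_6=k+1$, $n_2=k-1$, $n_1=4k+6$ and $n_3=n_4=n_5=0$. The plan is to reduce the statement to Lemma~\ref{lem:TT1opt-delta6} by a local surgery around the unique $(6,6)$-edge, and to check the converse direction by direct degree counting.

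\emph{Sufficiency.} Take $T\in TT^6_{\mathrm{opt}}(k)$ obtained by subdividing $v_iv_{i+1}$ ($i$ odd) with a new vertex $w$ carrying four pendant vertices. Then $\deg w=6$, and $w$ is adjacent to $v_{i+1}$ (degree $6$), to $v_i$ (degree $2$ when $3\le i\le 2k-1$) and to four leaves. Every other edge keeps its type from $TT^1_{\mathrm{opt}}(k)$. The edge $v_iv_{i+1}$, of type $(2,6)$, is replaced by one $(2,6)$-edge and one $(6,6)$-edge, and four $(1,6)$-edges are added. The order becomes $6k+1+5=6k+6$, and the only $S$-edge is the single $(6,6)$-edge, so $P(T)=F(6,6)=10$.

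\emph{Necessity.} Let $P(T)=10$ and let $uw$ be the unique $(6,6)$-edge.
\begin{enumerate}
\item Every edge other than $uw$ is of type $(1,6)$ or $(2,6)$. Hence each of $u,w$ has five further neighbours, each of degree $1$ or $2$.
\item I would show that one endpoint, say $w$, has at least four leaf neighbours. This should follow from the global counts: $n_1=4k+6$ leaves distributed over $k+1$ degree-$6$ vertices.
\item Delete $w$ and four of its pendant leaves, and join $u$ to the remaining (fifth) neighbour of $w$. This contracts the subdivision.
\item The resulting tree $T'$ has order $6k+1$, maximum degree $6$, and only $(1,6)$- and $(2,6)$-edges, so $P(T')=0$.
\item Lemma~\ref{lem:TT1opt-delta6} gives $T'\cong TT^1_{\mathrm{opt}}(k)$. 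Re-inserting $w$ on the edge from $u$ to that fifth neighbour exhibits $T$ as a subdivision of an edge $v_iv_{i+1}$, and the parity of $i$ is determined by which endpoint has degree $6$.
\end{enumerate}

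\emph{Main obstacle: steps 2 and 5.} The endpoints of the $(6,6)$-edge need not both have four leaf neighbours, so in step 2 the argument must count leaves per degree-$6$ vertex rather than rely on a local choice. More seriously, the positional restriction $3\le i\le 2k-1$ appears incompatible with some valid configurations:
\begin{itemize}
\item If the fifth neighbour of $w$ is a leaf, the re-insertion in step 5 corresponds to $i=1$. This gives a tree with the same parameters, and for $k\ge 2$ it does not appear to be isomorphic to any tree with $3\le i\le 2k-1$.
\item For $k=1$ the index range $3\le i\le 2k-1$ is empty, yet two adjacent $6$-stars on $12$ vertices give $P=10$.
\end{itemize}
I therefore expect this case analysis to force either enlarging the family to $1\le i\le 2k+1$, or reading $TT^6_{\mathrm{opt}}(k)$ up to this boundary case. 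Whichever version is proved, the reduction itself hinges entirely on Lemma~\ref{lem:TT1opt-delta6} classifying all $P=0$ trees as caterpillars. If the degree-$6$ vertices linked through degree-$2$ vertices can form a branching skeleton, that lemma needs strengthening first, and I would check this before relying on it.
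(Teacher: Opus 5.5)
Your route is the paper's: isolate the unique $(6,6)$-edge, strip four pendant vertices from one endpoint, suppress it, apply Lemma~\ref{lem:TT1opt-delta6}, and reverse. Your sufficiency check is fine. Your $k=1$ objection is also correct: $TT^6_{\mathrm{opt}}(1)$ is empty, yet two adjacent vertices each carrying five leaves give $n=12$ and $P=F(6,6)=10$. Your $i=1$ objection, however, is mistaken. Subdividing $v_1v_2$ gives the same tree as subdividing $v_{2k-1}v_{2k}$: in both, the $(6,6)$-edge joins the terminal degree-$6$ vertex of the skeleton path (five leaves) to its neighbour (four leaves). So for $k\ge 2$ the stated range of $i$ already covers every path-edge subdivision up to isomorphism, and widening it repairs nothing beyond $k=1$.

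The real failures are your steps~2 and~5, and they refute the statement, not just the argument.

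\emph{Step~2 is false.} For $k=5$, take adjacent $u,w$, each with three leaves and two degree-$2$ neighbours, each of which leads to a degree-$6$ vertex with five leaves. Then $n=36$, $n_6=6$, $n_2=4$, $n_1=26$ and $m_{6,6}=1$, so $P=10$, yet neither endpoint has four pendant leaves.

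\emph{Step~5 fails even when step~2 works and $T'\cong TT^1_{\mathrm{opt}}(k)$.} The fifth neighbour of $w$ may be a pendant leaf of an interior even vertex $v_{2j}$, so $w$ is re-inserted on a pendant edge, not on a path edge. A leaf fifth neighbour ``corresponds to $i=1$'' only when $u$ is an end of the skeleton. For $k=3$, take a vertex $c$ with three leaves, adjacent to a vertex with five leaves and joined through two degree-$2$ vertices to two further vertices with five leaves each. Here $n=24$, $m_{6,6}=1$, $m_{1,6}=18$, $m_{2,6}=4$, so $P=10$. But $c$ has three non-leaf neighbours, which no member of $TT^6_{\mathrm{opt}}(3)$ has.

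\emph{Your doubt about Lemma~\ref{lem:TT1opt-delta6} is justified.} It fails for $k\ge 4$: a vertex with three leaves and three degree-$2$ neighbours, each leading to a five-leaf star, has $n=25$ and $P=0$. The lemma's inductive step tacitly reattaches at an end of the path. The paper's proof of the present lemma has the same gaps: it asserts without justification that an endpoint with four pendant vertices exists, and that the reversal lands on a path edge.

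What does follow from Theorem~\ref{thm:delta6-P10}, by deleting leaves and suppressing degree-$2$ vertices, is this: $P(T)=10$ if and only if $T$ arises from a tree $H$ on $k+1$ vertices with $\Delta(H)\le 6$ by subdividing all edges but one (once each) and attaching $6-\deg_H(v)$ pendant vertices to every $v\in V(H)$. The paper's family realizes only path-shaped $H$, and none at all for $k=1$. So the lemma as stated is correct only for $k=2$.
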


\begin{proof}
Fix $k\ge 1$ and let $n=6k+6$.
By Theorem~\ref{thm:delta6-P10}, a tree $T\in\mathcal{T}^{(6)}_n$ satisfies
$P(T)=10$ if and only if $n\equiv 0\pmod 6$, and in this case the minimizing
configuration is unique.
Moreover, such a tree satisfies
\begin{equation}
m_{6,6}=1,\qquad
m_{1,6}=4k+6,\qquad
m_{2,6}=2k-2,
\label{eq:TT6-mcounts}
\end{equation}
and
\begin{equation}
n_1=4k+6,\qquad
n_2=k-1,\qquad
n_6=k+1,\qquad
n_3=n_4=n_5=0.
\label{eq:TT6-ncounts}
\end{equation}

\medskip

We first verify that every tree in $TT^6_{\mathrm{opt}}(k)$ satisfies
\eqref{eq:TT6-mcounts}--\eqref{eq:TT6-ncounts}.
Start from $TT^1_{\mathrm{opt}}(k)$ and subdivide an edge $v_iv_{i+1}$ with odd
$i$ and $3\le i\le 2k-1$.
In $TT^1_{\mathrm{opt}}(k)$ such an edge is of type $(2,6)$, so after subdivision
we obtain a new vertex $s$ adjacent to one degree--$2$ and one degree--$6$ vertex.
Attaching four pendant vertices to $s$ increases $\deg(s)$ from $2$ to $6$ and adds
four leaves.
Thus the number of degree--$6$ vertices increases by one, the number of leaves
increases by four, and the number of degree--$2$ vertices is unchanged.
Moreover, exactly one edge of type $(6,6)$ is created (namely the edge joining $s$
to its degree--$6$ neighbor), while all other edges remain of type $(1,6)$ or $(2,6)$.
Consequently \eqref{eq:TT6-mcounts}--\eqref{eq:TT6-ncounts} hold, and by
Theorem~\ref{thm:delta6-P10} such a tree satisfies $P(T)=10$.

\medskip

Conversely, let $T\in\mathcal{T}^{(6)}_n$ satisfy $P(T)=10$.
Then \eqref{eq:TT6-mcounts}--\eqref{eq:TT6-ncounts} hold, and in particular $T$
contains a unique edge of type $(6,6)$.
Let $s$ be the endpoint of this edge that is adjacent to four pendant vertices
(the existence and uniqueness of such a vertex follows from \eqref{eq:TT6-ncounts}).

Remove the four pendant vertices adjacent to $s$ and then suppress $s$, that is,
delete $s$ and replace its two remaining incident edges by a single edge.
The resulting tree $T^\circ$ has order $6k+1$, maximum degree $6$, and contains
only edges of type $(1,6)$ and $(2,6)$.
Thus $T^\circ$ satisfies the parameter description of
Theorem~\ref{thm:delta6-P0}, and by Lemma~\ref{lem:TT1opt-delta6} we obtain
\[
T^\circ \cong TT^1_{\mathrm{opt}}(k).
\]

Reversing the suppression shows that $T$ is obtained from
$TT^1_{\mathrm{opt}}(k)$ by subdividing exactly one $(2,6)$-edge on the defining
path, i.e., an edge $v_iv_{i+1}$ with odd $i$ and $3\le i\le 2k-1$, and attaching
four pendant vertices to the new vertex.
Therefore $T\in TT^6_{\mathrm{opt}}(k)$.
\end{proof}

\begin{lemma}
\label{lem:TT5opt-delta6}
Let $k\ge 2$ and set $n=6k+5$.
Let $TT^5_{\mathrm{opt}}(k)$ denote the family of trees obtained from a tree
$T_6\in TT^6_{\mathrm{opt}}(k-1)$ by subdividing an arbitrary edge of $T_6$
by a new vertex $s$ and then attaching four pendant vertices to $s$.

Then a tree $T\in\mathcal{T}^{(6)}_n$ satisfies $P(T)=20$
if and only if $T\in TT^5_{\mathrm{opt}}(k)$.
In particular, the unique minimizing configuration from
Theorem~\ref{thm:delta6-P20} belongs to $TT^5_{\mathrm{opt}}(k)$.
\end{lemma}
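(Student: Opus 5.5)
The plan is to mirror the proof of Lemma~\ref{lem:TT6opt-delta6}: reduce to the already-characterized case $P=10$ by deleting one degree-$6$ vertex together with four pendant leaves. The proof has a forward part (every member of $TT^5_{\mathrm{opt}}(k)$ has $P=20$) and a converse part (every tree with $P(T)=20$ arises this way).

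\emph{Forward direction.} Start from $T_6\in TT^6_{\mathrm{opt}}(k-1)$. By Lemma~\ref{lem:TT6opt-delta6} and Theorem~\ref{thm:delta6-P10} it has order $6k$, $m_{6,6}=1$, only degrees $1,2,6$, and only edge types $(1,6),(2,6),(6,6)$. Subdivide an edge $xy$ by a vertex $s$ and attach four leaves to $s$. Then $\deg(s)=6$, the degrees of $x$ and $y$ are unchanged, and exactly four leaves are added, so the order becomes $6k+5$. I will check the three possible types of $xy$.
\begin{itemize}
\item If $xy$ is a $(1,6)$- or $(2,6)$-edge, it is replaced by one edge of the same type (from $s$ to the low-degree endpoint) and one new $(6,6)$-edge.
\item If $xy$ is the $(6,6)$-edge, it is replaced by two $(6,6)$-edges.
\end{itemize}
In every case $m_{6,6}=2$ and no other type in $S$ appears. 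Hence $P(T)=2F(6,6)=20$, which by Theorem~\ref{thm:delta6-P20} is the minimum for $n\equiv 5\pmod 6$.

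\emph{Converse direction.} Let $P(T)=20$. By Theorem~\ref{thm:delta6-P20}, $T$ has only degrees $1,2,6$, exactly two $(6,6)$-edges, no $(1,2)$- or $(2,2)$-edges, and $n_6=k+1$, $n_1=4k+6$, $n_2=k-2$.

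I need a degree-$6$ vertex $s$ with three properties: it is an endpoint of a $(6,6)$-edge; at least four of its neighbours are leaves; and it has at most two non-leaf neighbours. Given such an $s$, I delete four of its pendant leaves and then suppress $s$ (if $s$ had five leaves, one of them stays behind as the new neighbour of its $6$-neighbour). The resulting tree $T^\circ$ has order $6k$, and its degrees are unchanged apart from the deleted vertices. The merged edge has the type of the other edge at $s$, so $m_{6,6}(T^\circ)=1$ and all other $S$-variables stay $0$. Then $P(T^\circ)=10$, and Lemma~\ref{lem:TT6opt-delta6} gives $T^\circ\in TT^6_{\mathrm{opt}}(k-1)$. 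Reversing the suppression shows that $T$ is obtained by subdividing one edge of $T^\circ$ and attaching four leaves, that is, $T\in TT^5_{\mathrm{opt}}(k)$.

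\emph{Existence of $s$ (the main obstacle).} To find $s$, I pass to the tree $H=T-\{\text{leaves}\}$. Since there are no $(1,2)$-edges, every degree-$2$ vertex has degree $2$ in $H$. The leaf count gives
\[
\sum_{v:\,\deg v=6} \bigl(6-d_H(v)\bigr)=4k+6,
\]
so $\sum d_H(v)=2k=2(n_6-1)$ over the degree-$6$ vertices. Thus suppressing the degree-$2$ vertices of $H$ yields a tree $R$ on the $k+1$ vertices of degree $6$, in which the two $(6,6)$-edges are edges of $R$. I then need an endpoint of one of these two edges with $d_H\le 2$. I would first try a longest-path argument in $R$, starting from a $(6,6)$-edge and moving outward toward a leaf of $R$. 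This is the delicate point. Its validity really depends on the caterpillar shape supplied by Lemmas~\ref{lem:TT1opt-delta6} and~\ref{lem:TT6opt-delta6}: along the defining path every degree-$6$ vertex has $d_H\le 2$. So I would argue that deleting one suitable vertex of degree $6$ cannot create branching, and use the structure of $R$ forced through the two previous lemmas to exhibit the required $s$.
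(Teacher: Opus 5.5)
\textbf{Verdict: there is a genuine gap, and it cannot be closed, because the statement is false.} Your forward direction is correct. Your converse takes the same route as the paper: find a degree-$6$ endpoint $s$ of a $(6,6)$-edge carrying four pendant leaves, delete them, suppress $s$, and apply Lemma~\ref{lem:TT6opt-delta6} to the resulting tree with $P=10$. The gap is exactly the step you flag. The parameters from Theorem~\ref{thm:delta6-P20} say nothing about the shape of your skeleton $R$. Your identity $\sum_v(6-d_H(v))=4k+6$ holds for \emph{every} tree $R$ on $k+1$ vertices. So any tree $R$ of maximum degree at most $6$, with two of its edges realized as $(6,6)$-edges and the rest through degree-$2$ vertices, gives a tree with $P=20$. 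Importing the caterpillar shape from Lemmas~\ref{lem:TT1opt-delta6} and~\ref{lem:TT6opt-delta6} is circular: those lemmas concern trees with $P=0$ or $P=10$, which you reach only after $s$ has been found. And such an $s$ need not exist. Let the $(6,6)$-edges be $xy$ and $yz$. Give $x$ and $z$ three leaves and two degree-$2$ neighbours each, and give $y$ three leaves and one degree-$2$ neighbour. Let each of these five degree-$2$ vertices lead to a degree-$6$ vertex with five leaves. This tree has $n=47$ and $P=20$, but each of $x,y,z$ has three non-leaf neighbours.

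The paper's proof breaks at the same places. Its claim that ``the degree constraints force $s$ to be adjacent to exactly four pendant vertices'' fails for the tree above, and it also relies on Lemma~\ref{lem:TT6opt-delta6}. Every tree in $TT^6_{\mathrm{opt}}(k-1)$ is a caterpillar in which each degree-$6$ vertex has at most two non-leaf neighbours. Subdividing one edge and attaching four leaves raises this count to three for at most one vertex. So the $47$-vertex tree, which has three such vertices, is not in $TT^5_{\mathrm{opt}}(7)$.

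Even where $s$ exists, the final appeal to Lemma~\ref{lem:TT6opt-delta6} is invalid, because that lemma is false. Take a vertex with three leaves, joined through two degree-$2$ vertices to two degree-$6$ vertices with five leaves each, and joined directly to a third degree-$6$ vertex with five leaves. This $24$-vertex tree has $P=10$ but is not in $TT^6_{\mathrm{opt}}(3)$. The underlying error is in the induction of Lemma~\ref{lem:TT1opt-delta6}: the removed branch may be reattached at any leaf of $T'$, not only at an end of the defining path.

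Finally, $TT^6_{\mathrm{opt}}(1)=\emptyset$, since there is no odd $i$ with $3\le i\le 1$. Hence $TT^5_{\mathrm{opt}}(2)=\emptyset$, although three degree-$6$ vertices on a path carrying $5,4,5$ leaves form a $17$-vertex tree with $P=20$. Only the inclusion ``every member of $TT^5_{\mathrm{opt}}(k)$ has $P=20$'' is provable as stated. A correct converse must allow an arbitrary skeleton $R$.
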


\begin{proof}
Fix $k\ge 2$ and let $n=6k+5$.
By Theorem~\ref{thm:delta6-P20}, a tree $T\in\mathcal{T}^{(6)}_n$ satisfies
$P(T)=20$ if and only if $n\equiv 5\pmod 6$, and in this case the minimizing
configuration is unique.
Moreover, such a tree is characterized by
\begin{equation}
m_{6,6}=2,\qquad
m_{1,6}=4k+6,\qquad
m_{2,6}=2k-4,
\label{eq:TT5-mcounts}
\end{equation}
and by the degree frequencies
\begin{equation}
n_1=4k+6,\qquad
n_2=k-2,\qquad
n_6=k+1,\qquad
n_3=n_4=n_5=0.
\label{eq:TT5-ncounts}
\end{equation}

Let $T_6\in TT^6_{\mathrm{opt}}(k-1)$.
Then $|V(T_6)|=6k$ and $P(T_6)=10$.
Choose any edge $e$ of $T_6$, subdivide it by a new vertex $s$, and attach four
pendant vertices to $s$.
The subdivision replaces the edge $e$ by two edges incident with $s$, and the
four pendant attachments increase $\deg(s)$ to $6$.
Hence the order increases by $5$, the maximum degree remains $6$, and a new
degree--$6$ vertex is created together with four new leaves.

If the subdivided edge was of type $(6,6)$, then this edge is destroyed and two
new $(6,6)$-edges incident with $s$ are created; otherwise exactly one new
$(6,6)$-edge is created.
As a result, $m_{6,6}$ increases by exactly one,
$m_{1,6}$ increases by four, and $m_{2,6}$ remains unchanged.
Since $T_6$ satisfies $m_{6,6}=1$, $m_{1,6}=4k+2$, and $m_{2,6}=2k-4$, the resulting
tree $T$ satisfies \eqref{eq:TT5-mcounts}--\eqref{eq:TT5-ncounts}.
By Theorem~\ref{thm:delta6-P20}, this implies $P(T)=20$, and therefore
$T\in TT^5_{\mathrm{opt}}(k)$.

Conversely, let $T\in\mathcal{T}^{(6)}_n$ satisfy $P(T)=20$.
Then \eqref{eq:TT5-mcounts}--\eqref{eq:TT5-ncounts} hold.
In particular, $T$ contains exactly two edges of type $(6,6)$.
Hence the subgraph induced by the degree--$6$ vertices has a vertex $s$ incident
with exactly one $(6,6)$-edge, and the degree constraints force $s$ to be adjacent
to exactly four pendant vertices.

Remove the four pendant neighbors of $s$ and suppress $s$, that is, delete $s$
and replace its two remaining incident edges by a single edge.
Denote the resulting tree by $T^\circ$.
This operation reduces the order by $5$, so $|V(T^\circ)|=6k$, and removes exactly
one $(6,6)$-edge.
Consequently, $T^\circ$ satisfies the parameter description of
Theorem~\ref{thm:delta6-P10}.
By Lemma~\ref{lem:TT6opt-delta6}, we obtain
\[
T^\circ \in TT^6_{\mathrm{opt}}(k-1).
\]

Reversing the suppression shows that $T$ is obtained from $T^\circ$ by subdividing
one edge and attaching four pendant vertices to the subdivision vertex.
Therefore $T\in TT^5_{\mathrm{opt}}(k)$.
\end{proof}

\begin{lemma}
\label{lem:TT2opt-delta6}
Let $k\ge 2$ and set $n=6k+2$.
Let $TT^2_{\mathrm{opt}}(k)$ denote the family of trees obtained from
$TT^1_{\mathrm{opt}}(k)$ by subdividing exactly one edge $v_iv_{i+1}$ with odd
$i$ and $3\le i\le 2k-1$.

Then a tree $T\in\mathcal{T}^{(6)}_n$ satisfies $P(T)=22$
if and only if $T\in TT^2_{\mathrm{opt}}(k)$.
In particular, every tree satisfying the parameter description in
Theorem~\ref{thm:delta6-P22} is isomorphic to a member of $TT^2_{\mathrm{opt}}(k)$.
\end{lemma}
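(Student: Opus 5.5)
The plan mirrors the proof of Lemma~\ref{lem:TT6opt-delta6}. By Theorem~\ref{thm:delta6-P22}, with $n=6k+2$ a tree $T\in\mathcal{T}^{(6)}_n$ satisfies $P(T)=22$ if and only if
\[
m_{2,2}=1,\qquad m_{1,6}=4k+2,\qquad m_{2,6}=2k-2,
\]
all other $m_{i,j}$ with $(i,j)\in S$ vanish, and $n_1=4k+2$, $n_2=n_6=k$, $n_3=n_4=n_5=0$. The lemma therefore reduces to showing that these parameter conditions describe exactly the family $TT^2_{\mathrm{opt}}(k)$.

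\textbf{Family members satisfy the conditions.} Take $TT^1_{\mathrm{opt}}(k)$ and subdivide an edge $v_iv_{i+1}$ with $i$ odd and $3\le i\le 2k-1$. Then $v_i$ is an internal odd vertex of degree $2$ and $v_{i+1}$ has degree $6$, so the edge is of type $(2,6)$. Subdividing it by a new vertex $s$ of degree $2$ replaces this edge by one $(2,2)$-edge $v_is$ and one $(2,6)$-edge $sv_{i+1}$. Hence $m_{2,6}$ is unchanged, $m_{2,2}$ becomes $1$, $n_2$ rises from $k-1$ to $k$, and the order rises by one to $6k+2$. This matches the parameter description, so $P(T)=22$.

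\textbf{Every tree with $P(T)=22$ lies in the family.} Let $P(T)=22$, so $T$ has a unique $(2,2)$-edge $uw$. Every other edge incident with $u$ or $w$ must be of type $(2,6)$, because $m_{1,2}=0$ and $m_{2,2}=1$. So $u$ and $w$ each have exactly one further neighbour, and that neighbour has degree $6$.

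Suppress $w$: delete it and join $u$ to the degree-$6$ neighbour of $w$. Call the result $T^\circ$. In $T^\circ$ the new edge is of type $(2,6)$, $u$ keeps degree $2$, and all other degrees are unchanged. Thus $T^\circ$ has order $6k+1$ with
\[
m_{1,6}=4k+2,\qquad m_{2,6}=2k-2,
\]
and all $S$-variables zero. By Theorem~\ref{thm:delta6-P0} and Lemma~\ref{lem:TT1opt-delta6}, $T^\circ\cong TT^1_{\mathrm{opt}}(k)$. Reversing the suppression, $T$ arises from $TT^1_{\mathrm{opt}}(k)$ by subdividing a $(2,6)$-edge.

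\textbf{Main obstacle: which edge was subdivided.} The $(2,6)$-edges of $TT^1_{\mathrm{opt}}(k)$ are exactly the path edges $v_iv_{i+1}$ and $v_{i+1}v_{i+2}$ with $i$ odd and $3\le i\le 2k-1$; in particular $v_1v_2$ and $v_{2k}v_{2k+1}$ are $(1,6)$-edges and do not qualify. I must check that these descriptions agree up to isomorphism. The reflection $v_j\mapsto v_{2k+2-j}$ of the defining path swaps the edges of the form $v_{i+1}v_{i+2}$ with those of the form $v_iv_{i+1}$. Applying it puts every subdivided $(2,6)$-edge into the stated normal form $v_iv_{i+1}$. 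Since a degree-$2$ vertex requires $k\ge 2$, the hypothesis $k\ge2$ guarantees that at least one such edge exists, which completes the characterization.
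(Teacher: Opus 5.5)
Your proposal is correct and follows the paper's proof. You reduce to the parameter description of Theorem~\ref{thm:delta6-P22}, check the forward direction by counting, and for the converse contract the unique $(2,2)$-edge (your suppression of $w$ is the same operation) to reach $TT^1_{\mathrm{opt}}(k)$ via Theorem~\ref{thm:delta6-P0} and Lemma~\ref{lem:TT1opt-delta6}.

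Your extra step normalizing which edge was subdivided, which the paper passes over, is sound, with one index slip. Over the range you state, $v_{i+1}v_{i+2}$ includes $v_{2k}v_{2k+1}$ and misses $v_2v_3$. The second family of $(2,6)$-edges is $v_{i-1}v_i$ with odd $i$, $3\le i\le 2k-1$. The reflection is also unnecessary, since subdividing $v_{i-1}v_i$ or $v_iv_{i+1}$ yields the same tree.
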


\begin{proof}
Fix $k\ge 2$ and let $n=6k+2$.
By Theorem~\ref{thm:delta6-P22}, the equality $P(T)=22$ is possible only for
$n\equiv 2\pmod 6$, and in this case the minimizing configuration is unique.
In particular, if $T\in\mathcal{T}^{(6)}_n$ satisfies $P(T)=22$, then
\begin{equation}
m_{2,2}=1,\qquad
m_{1,6}=4k+2,\qquad
m_{2,6}=2k-2,
\label{eq:TT2-mcounts}
\end{equation}
and the degree counts satisfy
\begin{equation}
n_1=4k+2,\qquad
n_2=k,\qquad
n_6=k,\qquad
n_3=n_4=n_5=0.
\label{eq:TT2-ncounts}
\end{equation}

\medskip
We first observe that every tree in $TT^2_{\mathrm{opt}}(k)$ satisfies
\eqref{eq:TT2-mcounts}--\eqref{eq:TT2-ncounts}.
Indeed, starting from $TT^1_{\mathrm{opt}}(k)$ and subdividing one edge
$v_iv_{i+1}$ with odd $i$ creates exactly one new vertex of degree $2$ and
introduces a single edge of type $(2,2)$, while all other degrees and edge types
remain unchanged.
Consequently, the resulting tree has exactly one $(2,2)$–edge, all remaining
edges of type $(1,6)$ or $(2,6)$, and the parameters given above.
By Theorem~\ref{thm:delta6-P22}, this implies $P(T)=22$.

\medskip
Conversely, let $T\in\mathcal{T}^{(6)}_n$ satisfy $P(T)=22$.
Then \eqref{eq:TT2-mcounts}--\eqref{eq:TT2-ncounts} hold, and in particular $T$
contains a unique edge of type $(2,2)$.
Contract this edge to a single degree–$2$ vertex, obtaining a tree $T^\circ$.
This operation reduces the order by one, eliminates the unique $(2,2)$–edge,
and leaves only edges of type $(1,6)$ and $(2,6)$.
Hence $T^\circ$ has order $6k+1$ and degree counts
\[
n^\circ_1=4k+2,\qquad n^\circ_2=k-1,\qquad n^\circ_6=k,
\qquad n^\circ_3=n^\circ_4=n^\circ_5=0.
\]
By Theorem~\ref{thm:delta6-P0} and Lemma~\ref{lem:TT1opt-delta6}, we obtain
$T^\circ\cong TT^1_{\mathrm{opt}}(k)$.

Reversing the contraction, the tree $T$ is obtained from
$TT^1_{\mathrm{opt}}(k)$ by subdividing exactly one edge of the defining path
$P_{2k+1}$, namely an edge $v_iv_{i+1}$ with odd $i$ and $3\le i\le 2k-1$.
Hence $T\in TT^2_{\mathrm{opt}}(k)$.

\medskip
This completes the proof.
\end{proof}

\begin{lemma}
\label{lem:TT4opt-delta6}
Let $k\ge 4$ and set $n=6k+4$.
Let $TT^5_{\mathrm{opt}}(k-1)$ be the family of trees on $6(k-1)+5=6k-1$ vertices
defined in Lemma~\ref{lem:TT5opt-delta6}.
Define $TT^4_{\mathrm{opt}}(k)$ as the family of trees obtained from a tree
$T_5\in TT^5_{\mathrm{opt}}(k-1)$ by choosing an arbitrary edge of $T_5$,
subdividing it by a new vertex $s$, and then attaching four pendant vertices to
$s$.

Then a tree $T\in\mathcal{T}^{(6)}_n$ satisfies $P(T)=30$ if and only if
$T\in TT^4_{\mathrm{opt}}(k)$.
In particular, every tree satisfying the parameter description of
Theorem~\ref{thm:delta6-P30} is isomorphic to a member of $TT^4_{\mathrm{opt}}(k)$.
\end{lemma}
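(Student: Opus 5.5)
The plan follows the pattern of Lemmas~\ref{lem:TT6opt-delta6} and~\ref{lem:TT5opt-delta6}, with one more layer of induction.

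\textbf{Step 1: parameters of a minimizer.} By Theorem~\ref{thm:delta6-P30}, with $n=6k+4$, any $T\in\mathcal{T}^{(6)}_n$ with $P(T)=30$ satisfies
\[
m_{6,6}=3,\qquad m_{1,6}=4k+6,\qquad m_{2,6}=2k-6,
\]
with all other $m_{i,j}$, $(i,j)\in S$, equal to $0$. Its degree counts are
\[
n_1=4k+6,\qquad n_2=k-3,\qquad n_6=k+1 .
\]
These are nonnegative exactly when $k\ge 3$; the hypothesis $k\ge 4$ is in any case stated.

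\textbf{Step 2: members of $TT^4_{\mathrm{opt}}(k)$ have $P=30$.} Take $T_5\in TT^5_{\mathrm{opt}}(k-1)$. It has $m_{6,6}=2$, $m_{1,6}=4k+2$ and $m_{2,6}=2k-6$. Subdivide an arbitrary edge $e$ by a new vertex $s$ and attach four leaves to $s$. We check each edge type $e$ can have in $T_5$.
\begin{itemize}
\item If $e$ is a $(6,6)$-edge, it is replaced by two $(6,6)$-edges.
\item If $e$ is a $(1,6)$-edge, it becomes one $(6,6)$-edge and one $(1,6)$-edge.
\item If $e$ is a $(2,6)$-edge, it becomes one $(6,6)$-edge and one $(2,6)$-edge.
\end{itemize}
In every case $m_{6,6}$ increases by one, $m_{1,6}$ increases by four (the new leaves), and $m_{2,6}$ is unchanged. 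So the resulting tree has exactly the parameters of Step 1, and Theorem~\ref{thm:delta6-P30} gives $P(T)=30$.

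\textbf{Step 3: the converse, by reduction.} Let $P(T)=30$. Consider the forest $F$ induced on the degree-$6$ vertices.
\begin{itemize}
\item $F$ has exactly $3$ edges, so some component is a nontrivial tree. A leaf $s$ of that component has exactly one degree-$6$ neighbour $w$.
\item The other five neighbours of $s$ have degree $1$ or $2$.
\end{itemize}
We want $s$ to have exactly four pendant neighbours and exactly one further neighbour $u$ of degree $2$ or $1$. Suppressing $s$ then creates an edge $uw$ of type $(2,6)$ or $(1,6)$.

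This is the main obstacle. Vertex $s$ may instead have several degree-$2$ neighbours, with fewer than four pendant neighbours.

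The first remedy is to choose $s$ carefully. Contract all degree-$2$ vertices, giving a tree $T'$ on the degree-$6$ vertices and leaves. Counting degree-$2$ vertices against edges between degree-$6$ vertices in $T'$ shows the following. Since $m_{1,2}=m_{2,2}=0$, every degree-$2$ vertex lies between two degree-$6$ vertices. Hence $T'$ restricted to degree-$6$ vertices is a tree with $k$ edges, of which $3$ are direct $(6,6)$-edges and $k-3$ are subdivided.

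Now pick $s$ to be a leaf of this contracted tree $T'$ whose unique edge is a direct $(6,6)$-edge. If no such leaf exists, all leaves of $T'$ attach via degree-$2$ paths. In that case I instead suppress a degree-$6$ vertex $s$ that is a leaf of $T'$ attached through a degree-$2$ vertex $u$: remove its four pendants and $s$ itself, keeping $u$, which becomes a leaf. This decreases $m_{2,6}$ by one and changes $m_{1,6}$ and $m_{6,6}$. So this operation is inconsistent with the reverse construction, and I would have to rule the case out.

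The better choice is therefore to apply the suppression to a vertex $s$ in the middle. Take $s$ to be any degree-$6$ vertex incident with exactly one $(6,6)$-edge, with four leaves and one degree-$2$ or leaf neighbour. I expect to prove existence by counting: at most three degree-$6$ vertices are touched by the three $(6,6)$-edges beyond the forest-leaf ones, while the average number of pendant leaves per degree-$6$ vertex is $(4k+6)/(k+1)>4$.

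\textbf{Step 4: conclusion.} After suppression, $T^\circ$ has order $6k-1$, $m_{6,6}=2$, $m_{1,6}=4k+2$ and $m_{2,6}=2k-6$. By Theorem~\ref{thm:delta6-P20} we get $P(T^\circ)=20$, so Lemma~\ref{lem:TT5opt-delta6} gives $T^\circ\in TT^5_{\mathrm{opt}}(k-1)$. Reversing the suppression then gives $T\in TT^4_{\mathrm{opt}}(k)$.
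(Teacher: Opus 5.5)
Your Step~2 is correct and matches the paper's ``if'' direction. For the converse you put your finger on the right weak point: nothing in your argument produces a degree-$6$ vertex $s$ with one $(6,6)$-edge, four pendant neighbours and one further neighbour. The averaging bound $(4k+6)/(k+1)>4$ only produces degree-$6$ vertices with many pendants, and these need not touch a $(6,6)$-edge. The paper's proof takes the same route and simply asserts that ``the degree constraints force $s$ to be adjacent to exactly four pendant vertices''.

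Such an $s$ need not exist. Let the three $(6,6)$-edges form a path $x_1x_2x_3x_4$. Join each of $x_1,x_4$ to two new vertices and each of $x_2,x_3$ to one new vertex, each through its own degree-$2$ vertex. Then attach pendant vertices until all these vertices have degree $6$. This gives $n=58$, $m_{6,6}=3$, $m_{2,6}=12$, $m_{1,6}=42$, hence $P(T)=30$. Yet every endpoint of a $(6,6)$-edge has exactly three pendant neighbours.

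In fact no repair is possible, because the lemma is false for every $k\ge 5$. In $TT^6_{\mathrm{opt}}$ every degree-$6$ vertex has at least four pendant neighbours. A subdivide-and-attach step gives the new vertex at least four, and lowers the pendant count of at most one old vertex, by one. Hence degree-$6$ vertices have at least three pendants in $TT^5_{\mathrm{opt}}$ and at least two in $TT^4_{\mathrm{opt}}$.

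Now take $c$ with one pendant neighbour, joined directly to $a_1,a_2,a_3$ and through degree-$2$ vertices to $a_4,a_5$, with each $a_i$ carrying five pendants. Then $n=34$, $m_{6,6}=3$, $m_{2,6}=4$, $m_{1,6}=26$. So $T$ has exactly the parameters of Theorem~\ref{thm:delta6-P30} and $P(T)=30$, but $T\notin TT^4_{\mathrm{opt}}(5)$. Here $s=a_1$ does exist, and the failure moves to your Step~4. The reduced tree has $P=20$, yet $c$ keeps only two pendants, so it is not in $TT^5_{\mathrm{opt}}(4)$; thus Lemma~\ref{lem:TT5opt-delta6} is false as well.

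The source of the error is Lemma~\ref{lem:TT1opt-delta6}. Its induction step tacitly reattaches the removed star at an end of the path, whereas the new leaf of $T'$ may hang from any degree-$6$ vertex. For example, with $n=25$, a degree-$6$ vertex joined through three degree-$2$ vertices to three degree-$6$ vertices, padded with pendants, has $P=0$ but is not $TT^1_{\mathrm{opt}}(4)$.

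What is true is your Step~2 together with the following description. The trees with $P(T)=30$ and $n=6k+4$ are exactly those obtained from a tree $H$ on $k+1$ vertices with $\Delta(H)\le 6$ by subdividing all but three of its edges once and attaching $6-d_H(v)$ pendant vertices to every $v\in V(H)$.
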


\begin{proof}
Let $k\ge 4$ and $n=6k+4$.
By Theorem~\ref{thm:delta6-P30}, a tree $T\in\mathcal{T}^{(6)}_n$ satisfies
$P(T)=30$ if and only if it is uniquely determined by
\begin{equation}
m_{6,6}=3,\qquad
m_{1,6}=4k+6,\qquad
m_{2,6}=2k-6,
\qquad
n_3=n_4=n_5=0.
\label{eq:TT4-target}
\end{equation}

Take $T_5\in TT^5_{\mathrm{opt}}(k-1)$.
Then $|V(T_5)|=6k-1$ and $P(T_5)=20$, so by Theorem~\ref{thm:delta6-P20} we have
\[
m_{6,6}(T_5)=2,\qquad
m_{1,6}(T_5)=4k+2,\qquad
m_{2,6}(T_5)=2k-6,
\qquad
n_3=n_4=n_5=0.
\]
Choose any edge $e$ of $T_5$, subdivide it by a new vertex $s$, and attach four
pendant vertices to $s$.
Then the order increases by $5$, so the resulting tree has $6k+4=n$ vertices, and
$\deg(s)=6$ implies $\Delta=6$.

Independently of the type of $e$, the subdivision deletes $e$ and creates two
edges incident with $s$, while the four pendant attachments add four new leaves.
In particular, exactly one additional edge of type $(6,6)$ is created in total,
so $m_{6,6}$ increases from $2$ to $3$; moreover, four new edges of type $(1,6)$
are added, while the number of $(2,6)$-edges is unchanged.
Consequently,
\[
m_{6,6}=3,\qquad m_{1,6}=4k+6,\qquad m_{2,6}=2k-6,
\]
and $n_3=n_4=n_5=0$ still holds.
Thus the resulting tree satisfies \eqref{eq:TT4-target}, and by
Theorem~\ref{thm:delta6-P30} it follows that $P(T)=30$.
Hence every tree in $TT^4_{\mathrm{opt}}(k)$ satisfies $P(T)=30$.

Conversely, let $T\in\mathcal{T}^{(6)}_n$ satisfy $P(T)=30$.
Then \eqref{eq:TT4-target} holds, so in particular $m_{6,6}=3$.
Hence the subgraph induced by the degree--$6$ vertices has a vertex $s$ incident
with exactly one $(6,6)$-edge, and the degree constraints force $s$ to be adjacent
to exactly four pendant vertices.
Remove these four pendant neighbors and suppress $s$ (delete $s$ and replace its
two remaining incident edges by a single edge).
Denote the resulting tree by $T^\circ$.
This reduces the order by $5$, so $|V(T^\circ)|=6k-1$, and it decreases $m_{6,6}$
by exactly $1$, yielding $m_{6,6}(T^\circ)=2$ while keeping $m_{2,6}$ unchanged and
reducing $m_{1,6}$ by $4$.
Therefore $T^\circ$ satisfies the parameter description in
Theorem~\ref{thm:delta6-P20} for $n=6k-1$, and hence $P(T^\circ)=20$.
By Lemma~\ref{lem:TT5opt-delta6}, we obtain $T^\circ\in TT^5_{\mathrm{opt}}(k-1)$.

Reversing the suppression shows that $T$ is obtained from $T^\circ$ by subdividing
one edge and attaching four pendant vertices to the new vertex, which is exactly
the defining operation of $TT^4_{\mathrm{opt}}(k)$.
Therefore $T\in TT^4_{\mathrm{opt}}(k)$.
\end{proof}

We state the following lemma, which provides a construction of a family of trees $T\in\mathcal{T}^{(6)}_n$ satisfying the parameter description given in Theorem~\ref{thm:delta6-P40}\textup{(i)}, without proof, since its proof relies on techniques similar to those used in the proofs of the lemmas in this subsection.

\begin{lemma}
\label{lem:TT3iopt-delta6}
Let $k\ge 5$ and set $n=6k+3$.
Let $TT^4_{\mathrm{opt}}(k-1)$ be the family of trees on $6(k-1)+4=6k-2$ vertices
defined in Lemma~\ref{lem:TT4opt-delta6}.
Let $TT^{3}_{(i)}(k)$ denote the family of trees obtained from a tree
$T_4\in TT^4_{\mathrm{opt}}(k-1)$ by choosing an edge $e$ of $T_4$, subdividing $e$
by a new vertex $s$, and then attaching four pendant vertices to $s$.

Then a tree $T\in\mathcal{T}^{(6)}_n$ satisfies the parameter description in
Theorem~\ref{thm:delta6-P40}\textup{(i)} if and only if $T\in TT^{3}_{(i)}(k)$.
In particular, every tree attaining the minimum in
Theorem~\ref{thm:delta6-P40}\textup{(i)} is isomorphic to a member of
$TT^{3}_{(i)}(k)$.
\end{lemma}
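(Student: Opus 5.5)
The proof follows the pattern of Lemmas~\ref{lem:TT5opt-delta6} and~\ref{lem:TT4opt-delta6}: one forward verification and one reduction step back to $TT^4_{\mathrm{opt}}(k-1)$.

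\textbf{Forward direction.} Start from $T_4\in TT^4_{\mathrm{opt}}(k-1)$ on $6k-2$ vertices. By Theorem~\ref{thm:delta6-P30} it satisfies
\[
m_{6,6}=3,\qquad m_{1,6}=4k+2,\qquad m_{2,6}=2k-8,\qquad n_3=n_4=n_5=0 .
\]
Subdivide an arbitrary edge $e$ by a new vertex $s$ and attach four pendant vertices to $s$. Then $s$ has degree $6$ and no other degree changes. The four new edges are of type $(1,6)$. For the two edges through $s$ there are three cases:
\begin{itemize}
\item if $e$ is of type $(1,6)$ or $(2,6)$, they are one edge of the same type plus one new $(6,6)$-edge;
\item if $e$ is of type $(6,6)$, they are two $(6,6)$-edges replacing one.
\end{itemize}
In every case $m_{6,6}$ rises by exactly one to $4$, $m_{1,6}$ rises by four to $4k+6=(2n+12)/3$, and $m_{2,6}=2k-8=(n-27)/3$ is unchanged. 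This is exactly the parameter description of Theorem~\ref{thm:delta6-P40}\textup{(i)}.

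\textbf{Converse, choosing $s$.} Let $T$ satisfy description (i). Since $n_3=n_4=n_5=0$, every vertex of degree $6$ has all its neighbours of degree $1$, $2$ or $6$. The four $(6,6)$-edges form a forest $F$ on the degree-$6$ vertices that are incident to such edges. Pick a leaf $s$ of $F$, so $s$ lies on exactly one $(6,6)$-edge. Its remaining five neighbours have degree $1$ or $2$. I must show that we can choose $s$ with exactly four pendant neighbours and one degree-$2$ neighbour, so that suppression is possible.

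\textbf{Main obstacle.} The remark in Lemma~\ref{lem:TT5opt-delta6} that ``degree constraints force'' four pendant neighbours is too quick. In general a leaf of $F$ could have three leaves and two degree-$2$ neighbours, or even five leaves. I would first try a rooting argument on the auxiliary tree whose nodes are the degree-$6$ vertices, with adjacency given either by a $(6,6)$-edge or by a path through a single degree-$2$ vertex. All degree-$2$ vertices are interior to $(2,6)$-paths, since $m_{1,2}=m_{2,2}=0$. A vertex $s$ with five leaf neighbours would make $T$ a star, impossible for $n\ge 27$. So each degree-$6$ vertex has exactly $6-\deg_{\mathrm{aux}}$ leaf neighbours.

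If that counting fails, I would instead suppress any vertex $s$ of degree $6$ that has exactly four leaves and exactly two non-leaf neighbours, one of which has degree $6$. Such a vertex exists by an extremal-path argument in the auxiliary tree, taking a leaf end of a longest path that touches $F$. I would then check that suppressing it keeps $T^\circ$ within description~(i) shifted to Theorem~\ref{thm:delta6-P30}. The only delicate point is that the merged edge must have an admissible type; it does, because its two ends have degrees in $\{1,2,6\}$, not both small.

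\textbf{Converse, reduction.} After removing the four leaves of $s$ and suppressing $s$, the tree $T^\circ$ has $6k-2$ vertices. Its parameters are $m_{6,6}=3$, $m_{1,6}=4k+2$, $m_{2,6}=2k-8$, and $n_3=n_4=n_5=0$, which is the unique configuration of Theorem~\ref{thm:delta6-P30}. Hence $P(T^\circ)=30$, and Lemma~\ref{lem:TT4opt-delta6} gives $T^\circ\in TT^4_{\mathrm{opt}}(k-1)$. Reversing the suppression shows that $T\in TT^3_{(i)}(k)$.
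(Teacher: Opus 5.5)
\textbf{The converse fails at the step you flagged, and the lemma's converse is false.} Your forward direction is correct. Every edge of $T_4$ has a degree-$6$ endpoint, so in all three cases $m_{6,6}$ goes from $3$ to $4$, $m_{1,6}$ goes from $4k+2$ to $4k+6$, and $m_{2,6}=2k-8$ is unchanged.

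The gap is where you settle existence by assertion. You need a degree-$6$ vertex $s$ that the reverse operation can undo, i.e.\ one with at least four pendant neighbours and at least one degree-$6$ neighbour. Such a vertex need not exist. (This class also contains vertices with five pendant neighbours and one degree-$6$ neighbour, produced by subdividing a $(1,6)$-edge. So your remark that five pendant neighbours would force a star is wrong, and your fallback list omits this case.)

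Here is a counterexample. Take a path $u_0u_1u_2u_3u_4$. Hang two paths $u_jwz$ at each of $u_0,u_4$ and one at each of $u_1,u_2,u_3$, where $w$ has degree $2$ and $z$ receives five pendant leaves. Finally give every $u_j$ three pendant leaves.
\begin{itemize}
\item This tree has $n_6=12$, $n_2=7$, $n_1=50$, so $n=69=6\cdot 11+3$.
\item Its edge counts are $m_{6,6}=4$, $m_{1,6}=50=(2n+12)/3$, $m_{2,6}=14=(n-27)/3$, and every other $m_{i,j}=0$. Hence it satisfies description (i) of Theorem~\ref{thm:delta6-P40}.
\item Every member of $TT^3_{(i)}(11)$ contains a vertex of the required kind. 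It is the new vertex $s$: its two non-pendant neighbours are the ends of an edge of $T_4$, one of which has degree $6$. This property is invariant under isomorphism.
\item In the counterexample, the only degree-$6$ vertices with a degree-$6$ neighbour are $u_0,\dots,u_4$, and each has just three pendant neighbours. So the tree is not in $TT^3_{(i)}(11)$, even up to isomorphism.
\end{itemize}
In your auxiliary-tree language, all four $(6,6)$-edges join nodes of auxiliary degree $3$. Every auxiliary leaf $z$ hangs on a degree-$2$ vertex, so the end of any longest path is a vertex whose suppression creates a $(1,2)$-edge. The extremal-path argument therefore cannot work.

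\textbf{The paper's analogous lemmas fail the same way.} The paper gives no proof of this lemma and refers to the technique of Lemmas~\ref{lem:TT5opt-delta6} and~\ref{lem:TT4opt-delta6}. Their key sentence, that ``the degree constraints force $s$ to be adjacent to exactly four pendant vertices'', breaks identically. A $(6,6)$-path $u_0u_1u_2$ padded as above gives a $47$-vertex tree with the parameters of Theorem~\ref{thm:delta6-P20} that lies outside $TT^5_{\mathrm{opt}}(7)$. Even the uniqueness in Lemma~\ref{lem:TT1opt-delta6} fails for $k\ge 4$: take one degree-$6$ vertex carrying three degree-$2$ branches and three leaves, with $n=25$. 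So your final appeal to Lemma~\ref{lem:TT4opt-delta6} is also unsupported, and no argument can prove the statement as given.

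\textbf{What your observation does prove.} Your auxiliary-tree observation, that each degree-$6$ vertex has exactly $6-\deg_{\mathrm{aux}}$ leaf neighbours, proves the correct characterization. A tree $T$ satisfies (i) if and only if it arises from a tree $A$ on $k+1$ vertices with $\Delta(A)\le 6$ as follows: keep four edges of $A$, subdivide each of the other $k-4$ edges once, and attach $6-\deg_A(v)$ pendant leaves to each $v\in V(A)$.
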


\begin{lemma}
\label{lem:TT3iiopt-delta6}
Let $k\ge 2$ and set $n=6k+3$.
Let $TT^1_{\mathrm{opt}}(k)$ be the tree on $6k+1$ vertices defined in
Lemma~\ref{lem:TT1opt-delta6}, with underlying path
$P_{2k+1}=v_1v_2\cdots v_{2k+1}$.

Define $TT^{3}_{(ii)}(k)$ as the family of trees obtained from
$TT^1_{\mathrm{opt}}(k)$ by choosing two odd indices $i<j$ with
$3\le i<j\le 2k-1$ and performing the following operation:

delete the two edges $v_{i-1}v_i$ and $v_{j-1}v_j$, introduce a new vertex $s$,
add the three edges
\[
sv_{i-1},\quad sv_i,\quad sv_j,
\]
and finally attach one new leaf $x$ to $v_{j-1}$.

Then a tree $T\in\mathcal{T}^{(6)}_n$ satisfies the parameter description in
Theorem~\ref{thm:delta6-P40}\textup{(ii)}, namely
\[
m_{2,3}=2,\qquad m_{3,6}=1,\qquad
m_{1,6}=\frac{2n+3}{3}=4k+3,\qquad
m_{2,6}=\frac{n-15}{3}=2k-4,
\]
if and only if $T\in TT^{3}_{(ii)}(k)$.
In particular, every minimizing tree from
Theorem~\ref{thm:delta6-P40}\textup{(ii)} is isomorphic to a member of
$TT^{3}_{(ii)}(k)$.
\end{lemma}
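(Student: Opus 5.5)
The plan is to prove both inclusions by direct degree bookkeeping, using Theorem~\ref{thm:delta6-P0} and Lemma~\ref{lem:TT1opt-delta6} for the converse. First I convert the parameter description of Theorem~\ref{thm:delta6-P40}(ii) into degree counts via the handshake rows \eqref{eq:tree-ni-1}--\eqref{eq:tree-ni-6}:
\[
n_3=\tfrac{m_{2,3}+m_{3,6}}{3}=1,\qquad n_2=\tfrac{m_{2,3}+m_{2,6}}{2}=k-1,\qquad n_6=\tfrac{m_{1,6}+m_{2,6}+m_{3,6}}{6}=k,\qquad n_1=4k+3 .
\]

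\emph{Forward direction.} Start from $TT^1_{\mathrm{opt}}(k)$ and perform the operation with odd $i<j$, $3\le i<j\le 2k-1$.
\begin{itemize}
\item Deleting $v_{i-1}v_i$ and $v_{j-1}v_j$ splits the path part into three segments $[v_1,v_{i-1}]$, $[v_i,v_{j-1}]$, $[v_j,v_{2k+1}]$. The new vertex $s$ joins one vertex from each segment, so the result is a tree on $6k+3$ vertices.
\item The even vertices $v_{i-1},v_{j-1}$ keep degree $6$: each loses one path neighbour and gains $s$, respectively the leaf $x$.
\item The odd internal vertices $v_i,v_j$ keep degree $2$, and $s$ has degree $3$.
\item The two deleted edges were of type $(2,6)$. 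The new edges are $sv_{i-1}$ of type $(3,6)$, $sv_i$ and $sv_j$ of type $(2,3)$, and $v_{j-1}x$ of type $(1,6)$.
\end{itemize}
Hence $m_{2,3}=2$, $m_{3,6}=1$, $m_{2,6}=2k-4$ and $m_{1,6}=4k+3$, with all other $S$-variables zero. The bounds $i\ge3$ and $j\le 2k-1$ are exactly what make $v_{i-1}$ an even vertex and $v_i,v_j$ internal degree-$2$ vertices.

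\emph{Converse: set-up.} Let $T$ have the stated parameters and let $s$ be its unique degree-$3$ vertex. Since $m_{1,3}=m_{3,3}=0$, the neighbours of $s$ are one degree-$6$ vertex $a$ and two degree-$2$ vertices $b,c$. Since $m_{1,2}=m_{2,2}=0$ and both $(2,3)$-edges are used at $s$, the other neighbours of $b$ and $c$ have degree $6$. Deleting $s$ leaves three components $A\ni a$, $B\ni b$, $C\ni c$.

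\emph{Converse: choosing the leaf.} Take a vertex of $B$ farthest from $s$. It is a leaf, since the only degree-$3$ vertex is $s$ and $B$ contains a degree-$6$ vertex, namely the other neighbour of $b$. Its neighbour $w\in B$ has degree $6$ because $m_{1,2}=m_{1,3}=0$. Call this leaf $x$.

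\emph{Converse: reducing to $P(T)=0$.} Delete $s$ and $x$, and add the edges $ab$ and $wc$. Adding $ab$ merges $A$ and $B$, and adding $wc$ then attaches $C$, so the result $T^\circ$ is a tree; this is the point where $w\in B$ is needed. In $T^\circ$ the degrees of $a,b,c,w$ are restored to $6,2,2,6$. It has order $6k+1$, only edges of types $(1,6)$ and $(2,6)$, and $m_{1,6}=4k+2$, $m_{2,6}=2k-2$. So $T^\circ$ satisfies the description of Theorem~\ref{thm:delta6-P0}, and by Lemma~\ref{lem:TT1opt-delta6} we get $T^\circ\cong TT^1_{\mathrm{opt}}(k)$.

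\emph{Converse: matching indices.} Here I expect the only real obstacle, namely showing that the edges $ab$ and $wc$ sit in the right positions of the defining path $v_1\cdots v_{2k+1}$ to give admissible indices $i<j$.
\begin{itemize}
\item Both $ab$ and $wc$ are $(2,6)$-edges of $T^\circ$, hence path edges. Removing them from the path gives three segments, which are the traces of $A$, $B$, $C$.
\item The segment from $B$ contains an endpoint of each removed edge, namely $b$ and $w$, so it is the middle segment. Reversing the path if necessary, the order is $A$, then $B$ running from $b$ to $w$, then $C$.
\item This forces $a=v_{i-1}$, $b=v_i$, $w=v_{j-1}$, $c=v_j$ with $i<j$ odd.
\item Since $a$ has degree $6$, $i-1\ge2$ is even, so $i\ge 3$. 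Since $c$ has degree $2$, it is internal and odd, so $j\le 2k-1$.
\end{itemize}
Undoing the reduction is then exactly the defining operation of $TT^{3}_{(ii)}(k)$, so $T\in TT^{3}_{(ii)}(k)$.
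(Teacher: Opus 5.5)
\textbf{The step that fails.} Your reduction follows the paper's own route: remove $s$ and one surplus leaf, restore two $(2,6)$-edges, and identify the result with $TT^1_{\mathrm{opt}}(k)$ via Lemma~\ref{lem:TT1opt-delta6}. It is more careful than the paper in two places.
\begin{itemize}
\item Taking $x$ in the branch $B$ is exactly what makes $T^\circ$ a tree; the paper only asserts ``necessarily this leaf is adjacent to $v_{j-1}$''.
\item Your forward direction is right, and your index matching is right \emph{once} $T^\circ$ is known to be a caterpillar.
\end{itemize}
The failing step is the appeal to Lemma~\ref{lem:TT1opt-delta6}, which is false for $k\ge 4$. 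The condition $P(T^\circ)=0$ only says that every edge has type $(1,6)$ or $(2,6)$. Such trees arise from an \emph{arbitrary} tree $H$ on $k$ vertices with $\Delta(H)\le 6$, by subdividing every edge once and padding each original vertex with leaves up to degree $6$. $TT^1_{\mathrm{opt}}(k)$ is only the case $H=P_k$. Take $k=4$ and $H=K_{1,3}$. This gives a tree on $25$ vertices with $m_{1,6}=18$, $m_{2,6}=6$ and $P=0$, whose centre has three degree-$2$ neighbours, so it is not a caterpillar. The paper's induction breaks when it assumes the re-attached star extends an \emph{end} of the path. The new leaf of $T'$ may be a pendant vertex of an interior even vertex $v_{2m}$.

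\textbf{The statement itself is false for $k\ge 5$.} The ``only if'' direction fails for every $k\ge 5$, so no local repair of your argument can work. Here is a counterexample for $k=5$, $n=33$:
\begin{itemize}
\item let $s$ be adjacent to $p_2$, $b$ and $c$;
\item let $p_1q_1p_2q_2p_3$ be a path;
\item let $b$ be adjacent to $r_B$ and $c$ adjacent to $r_C$;
\item attach $5,3,5,5,5$ pendant vertices at $p_1,p_2,p_3,r_B,r_C$ respectively.
\end{itemize}
Then $m_{2,3}=2$, $m_{3,6}=1$, $m_{2,6}=6=2k-4$, $m_{1,6}=23=4k+3$, and all other $S$-variables vanish. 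Yet $p_2$ has three non-leaf neighbours, while in every member of $TT^{3}_{(ii)}(k)$ each degree-$6$ vertex has at most two. Running your reduction on this tree gives a $T^\circ$ whose skeleton is a spider centred at $p_2$, which is exactly where the lemma is misapplied.

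\textbf{What your argument does prove.} Your argument establishes the correct, weaker description. $T$ satisfies (ii) if and only if it arises from some tree $T^\circ$ on $6k+1$ vertices with $P(T^\circ)=0$ as follows:
\begin{itemize}
\item choose two $(2,6)$-edges $ab$ and $wc$ with $\deg a=\deg w=6$, such that $b$ and $w$ lie in the same component of $T^\circ-ab-wc$;
\item delete both edges;
\item add $s$ adjacent to $a,b,c$, and attach a leaf at $w$.
\end{itemize}
The statement as written holds only for $k\le 4$. There each branch at $s$ has at most two degree-$6$ vertices, so it is forced to be a path attached at an end.
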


\begin{proof}
Fix $k\ge 2$ and set $n=6k+3$.

Start with $TT^1_{\mathrm{opt}}(k)$.
In this tree all degrees belong to $\{1,2,6\}$ and
\[
m_{1,6}=4k+2,\qquad m_{2,6}=2k-2,
\qquad m_{a,b}=0\ \text{for all other }(a,b)\in S .
\]
Choose odd indices $i<j$ with $3\le i<j\le 2k-1$.
Then $v_i$ and $v_j$ are internal odd vertices of degree $2$, while
$v_{i-1}$ and $v_{j-1}$ are even vertices of degree $6$.

Delete the edges $v_{i-1}v_i$ and $v_{j-1}v_j$.
Introduce a new vertex $s$ and add the edges $sv_{i-1}$, $sv_i$, and $sv_j$.
Finally, attach a new leaf $x$ to $v_{j-1}$.

After this modification the vertex $s$ has degree $3$ and is adjacent to exactly
one vertex of degree $6$ (namely $v_{i-1}$) and to exactly two vertices of degree
$2$ (namely $v_i$ and $v_j$), so $m_{3,6}=1$ and $m_{2,3}=2$ hold.

The two deleted edges remove two edges of type $(2,6)$, while the new edge
$xv_{j-1}$ creates one edge of type $(1,6)$; all remaining new edges are of types
$(2,3)$ or $(3,6)$.
Therefore,
\[
m_{1,6}=(4k+2)+1=4k+3,\qquad
m_{2,6}=(2k-2)-2=2k-4,
\]
and no other $m_{a,b}$ becomes nonzero.
Hence every tree in $TT^{3}_{(ii)}(k)$ satisfies the parameter description in
Theorem~\ref{thm:delta6-P40}\textup{(ii)}.

Conversely, let $T\in\mathcal{T}^{(6)}_n$ satisfy the parameter description in
Theorem~\ref{thm:delta6-P40}\textup{(ii)}.
Since $m_{3,6}=1$ and $m_{2,3}=2$, there is a unique vertex $s$ of degree $3$
adjacent to exactly one vertex $v_{i-1}$ of degree $6$ and to exactly two
vertices $v_i$ and $v_j$ of degree $2$.
Each of $v_i$ and $v_j$ has exactly one further neighbor distinct from $s$; since
no edges of type $(1,2)$, $(2,2)$ or $(3,3)$ occur, these neighbors must be
degree--$6$ vertices, which we denote by $v_{i+1}$ and $v_{j+1}$, respectively.

Moreover, $m_{1,6}=4k+3$ implies that there exists a leaf $x$ adjacent to a
degree--$6$ vertex; necessarily this leaf is adjacent to $v_{j-1}$.

Now delete the leaf $x$ and delete the vertex $s$ together with its three
incident edges, and add back the two edges $v_{i-1}v_i$ and $v_{j-1}v_j$.
Denote the resulting tree by $T^\circ$.
This removes exactly two vertices, so $|V(T^\circ)|=6k+1$.
By construction, all degrees in $T^\circ$ belong to $\{1,2,6\}$ and all edges are
of type $(1,6)$ or $(2,6)$.
Furthermore,
\[
m_{1,6}(T^\circ)=4k+2,\qquad
m_{2,6}(T^\circ)=2k-2,
\qquad
m_{a,b}(T^\circ)=0\ \text{for all other }(a,b)\in S .
\]
Hence $T^\circ$ satisfies the parameter description of
Theorem~\ref{thm:delta6-P0}, and by Lemma~\ref{lem:TT1opt-delta6} we conclude that
\[
T^\circ \cong TT^1_{\mathrm{opt}}(k).
\]

Reversing the above reduction shows that $T$ is obtained from
$TT^1_{\mathrm{opt}}(k)$ by choosing odd indices $i<j$ and performing exactly the
operation in the definition of $TT^{3}_{(ii)}(k)$.
Therefore $T\in TT^{3}_{(ii)}(k)$.
\end{proof}

\section{Conclusion}

In this paper we determined the extremal values of the $\sigma$-irregularity index 
for trees of fixed order $n$ with maximum degree $\Delta=6$, and we completely 
characterized all trees attaining these values. 
We showed that the maximum of $\sigma(T)$ depends rigidly on the residue class of 
$n$ modulo $6$, leading to exactly six regimes with penalty values 
$0,10,20,22,30,$ and $40$. 
For each case we derived explicit degree distributions and edge-type multiplicities, 
and we constructed all extremal trees via concrete recursive operations.

A notable feature of our analysis is the large number of case distinctions 
required even for the moderate bound $\Delta=6$. 
The proofs combine handshake identities, congruence arguments, and sharp lower 
bounds from the penalty table for edge types. 
This already reveals the substantial combinatorial complexity of the problem 
in the tree setting and for relatively small maximum degree.

Together with earlier results for $\Delta=4$ and $\Delta=5$, our findings suggest 
a clear general trend. 
As $\Delta$ increases, both the number of admissible local degree patterns and the 
structural diversity of extremal trees grow rapidly. 
In particular, we expect that for $\Delta \ge 7$ the extremal configurations will 
no longer involve only degrees $\{1,2,\Delta\}$, but will also include additional 
intermediate degrees, as already observed here for $n \equiv 3 \pmod 6$, where 
degree-$3$ vertices occur in one exceptional extremal family.

From a broader perspective, a full classification of $\sigma$-maximizing trees for 
arbitrary $\Delta$ appears highly nontrivial. 
The number of residue classes modulo $\Delta$ increases, the feasible edge types 
widen, and the associated penalty systems become more intricate. 
For large $\Delta$, a brute-force case analysis along the present lines is unlikely 
to be practical.

Nevertheless, our results indicate that a complete characterization may still be 
possible in those congruence classes modulo $\Delta$ for which the minimal penalty 
value remains small. 
This points to a promising research direction: first resolving the 
``low-penalty'' cases and gradually extending the theory to larger $\Delta$. 
We hope that the framework developed here will serve as a useful basis for future 
work on extremal problems for $\sigma$-irregularity beyond $\Delta=6$.


\begin{thebibliography}{99}

\bibitem{Abdo2014}
H.~Abdo, N.~Cohen, and D.~Dimitrov, 
Graphs with maximal irregularity, 
\emph{Filomat}, 28 (2014), 1315--1322.

\bibitem{Abdo2018}
H.~Abdo, D.~Dimitrov, and I.~Gutman, 
Graphs with maximal $\sigma$-irregularity, 
\emph{Discrete Appl. Math.}, 250 (2018), 57--64.

\bibitem{Albertson1997}
M.~O. Albertson, 
The irregularity of a graph, 
\emph{Ars Combin.}, 46 (1997), 219--225.

\bibitem{Arif2023}
S.~Arif, K.~Hayat, and S.~Khan, Spectral bounds for irregularity indices and their applications in QSPR modeling, \emph{J. Appl. Math. Comput.}, 72 (2023), 6351–6373.



\bibitem{Dimitrov2026}
D.~Dimitrov, Ž.~Kovijanić Vukićević, G.~Popivoda, J.~Sedlar, R.~Škrekovski, and S.~Vujošević, 
The $\sigma$-irregularity of trees with maximum degree 5, 
\emph{Discrete Appl. Math.}, 382 (2026), 124--136.



\bibitem{Estrada2010}
E.~Estrada, 
Quantifying network heterogeneity, 
\emph{Phys. Rev. E}, 82 (2010), 066102.

\bibitem{Gutman2005}
I.~Gutman, P.~Hansen, and H.~Mélot, 
Variable neighborhood search for extremal graphs. 10. Comparison of irregularity indices for chemical trees, 
\emph{J. Chem. Inf. Model.}, 45 (2005), 222--230.

\bibitem{Gutman2018}
I.~Gutman, M.~Togan, A.~Yurttaş, A.~S. \c{C}evik, and I.~N. Cangül, 
Inverse problem for sigma index, 
\emph{MATCH Commun. Math. Comput. Chem.}, 79 (2018), 491--508.

\bibitem{Hansen2005}
P.~Hansen and H.~Mélot, 
Variable neighborhood search for extremal graphs. 9. Bounding the irregularity of a graph, 
in: \emph{DIMACS Ser. Discrete Math. Theor. Comput. Sci.}, Vol.~69, 2005, pp.~253--264.

\bibitem{Kovijanic2024}
Ž.~Kovijanić Vukićević, G.~Popivoda, S.~Vujošević, R.~Škrekovski, and D.~Dimitrov, 
The $\sigma$-irregularity of chemical trees, 
\emph{MATCH Commun. Math. Comput. Chem.}, 91 (2024), 267--282.

\bibitem{Reti2018QSPR}
T.~Réti, R.~Sharafdini, Á.~Drégelyi-Kiss, and H.~Haghbin, 
Graph irregularity indices used as molecular descriptors in QSPR studies, 
\emph{MATCH Commun. Math. Comput. Chem.}, 79 (2018), 509--524.

\bibitem{Reti2019}
T.~Réti, 
On some properties of graph irregularity indices with a particular regard to the $\sigma$-index, 
\emph{Appl. Math. Comput.}, 344 (2019), 107--115.

\bibitem{Snijders1981}
T.~A.~B. Snijders, 
The degree variance: an index of graph heterogeneity, 
\emph{Social Networks}, 3 (1981), 163--174.

\end{thebibliography}
\end{document}